\documentclass[11pt,reqno]{amsart}
\usepackage{amsmath, amssymb, amsthm}
\usepackage{tensor}
\usepackage{enumerate}
\usepackage[colorlinks=true,allcolors=blue]{hyperref}
\usepackage{blkarray}
\usepackage[numbers]{natbib}
\usepackage[font=footnotesize]{caption}

\usepackage{tikz}
\usetikzlibrary{cd,arrows,calc}

\newcommand{\C}{\mathbb{C}}
\newcommand{\R}{\mathbb{R}}
\newcommand{\Z}{\mathbb{Z}}
\newcommand{\N}{\mathbb{N}}
\newcommand{\Q}{\mathbb{Q}}

\newcommand{\bK}{\mathbb{K}}
\newcommand{\bT}{\mathbb{T}}

\newcommand{\Cuntz}[1]{\mathcal{O}_{#1}}
\newcommand{\Cliff}[1]{\C\ell(#1)}
\newcommand{\Cln}[1]{\C\ell_{#1}}

\newcommand{\Aut}[1]{\text{\normalfont Aut}(#1)}
\newcommand{\AutId}[1]{\operatorname{Aut}_0(#1)}
\newcommand{\AutSt}[2]{\operatorname{Aut}_{#1}(#2)}
\newcommand{\eqAut}[2]{\operatorname{Aut}_{#1}(#2)}
\newcommand{\eqAutId}[2]{\operatorname{Aut}_{#1,0}(#2)}
\newcommand{\Autf}[3]{\operatorname{Aut}_{#1}^{#2}(#3)}

\newcommand{\Endo}[1]{\operatorname{End}\left(#1\right)}

\newcommand{\Proj}[2]{\operatorname{Proj}_{#1}(#2)}

\newcommand{\id}[1]{\operatorname{id}_{#1}}

\newcommand{\cA}{\mathcal{A}}
\newcommand{\cP}{\mathcal{P}}

\newcommand{\cD}{\mathcal{D}}
\newcommand{\cG}{\mathcal{G}}
\newcommand{\cH}{\mathcal{H}}

\newcommand{\cI}{\mathcal{I}}

\newcommand{\cEG}[1]{\mathcal{E}_{#1}}
\newcommand{\cBG}[1]{\mathcal{B}_{#1}}


\newcommand{\op}{\text{op}}
\newcommand{\End}{\text{End}}

\DeclareMathOperator{\Ad}{Ad}

\DeclareMathOperator{\colim}{colim}
\DeclareMathOperator{\hocolim}{hocolim}

\newcommand{\grS}{\widehat{\mathcal{S}}}

\newcommand{\Top}{\mathcal{T}op}
\newcommand{\GTop}{G\mathcal{T}op}
\newcommand{\GCStar}{G\text{-}C^*\textbf{Alg}}

\newtheorem{theorem}{Theorem}[section]
\newtheorem{lemma}[theorem]{Lemma}
\newtheorem{corollary}[theorem]{Corollary}
\newtheorem{prop}[theorem]{Proposition}

\theoremstyle{definition}
\newtheorem{definition}[theorem]{Definition}
\newtheorem{remark}[theorem]{Remark}
\newtheorem{example}[theorem]{Example}

\makeatletter
\newcommand{\extp}{\@ifnextchar^\@extp{\@extp^{\,}}}
\def\@extp^#1{\mathop{\bigwedge\nolimits^{\!#1}}}
\makeatother

\begin{document}
\title[Units of $\Z/p\Z$-equivariant $K$-theory and bundles of UHF-algebras]{Units of $\Z/p\Z$-equivariant $K$-theory \\ and bundles of UHF-algebras}
\author{Valerio Bianchi \and Ulrich Pennig}
\address{Cardiff University, School of Mathematics, Senghennydd Road, Cardiff, CF24 4AG, Wales, UK}
\email{BianchiV1@cardiff.ac.uk}
\email{PennigU@cardiff.ac.uk}

\begin{abstract}
We consider infinite tensor product actions of $G = \Z/p\Z$ on the UHF-al\-ge\-bra $D = \Endo{V}^{\otimes \infty}$ for a finite-dimensional unitary $G$-repre\-sen\-ta\-tion $V$ and determine the equivariant homotopy type of the group $\Aut{D \otimes \bK}$, where~$\bK$ are the compact operators on $\ell^2(G) \otimes H_0$ for a separable Hilbert space $H_0$ with $\dim(H_0) = \infty$. We show that this group carries an equivariant infinite loop space structure revealing it as the first space of a naive $G$-spectrum, which we prove to be equivalent to the positive units $gl_1(KU^D)_+$ of equivariant $KU^D$-theory. Here, $KU^D$ is a $G$-spectrum representing $X \mapsto K_*^G(C(X) \otimes D)$. As a consequence the first group of the cohomology theory associated to $gl_1(KU^D)_+$ classifies equivariant $D \otimes \bK$-bundles over finite CW-complexes. 
\end{abstract}

\maketitle

\section{Introduction}

Let $R$ be a commutative ring. Forgetting its additive structure we may focus on the multiplication in $R$ and consider its group of units $GL_1(R)$. The analogue of commutative rings in stable homotopy theory are commutative ring spectra. Famous examples here include the ones representing ordinary cohomology or topological $K$-theory. Similarly to the algebraic setting an associative ring spectrum $E$ has a space of units $GL_1(E)$, which turns out to be the zeroth space of a spectrum $gl_1(E)$ of units in case $E$ is an $E_\infty$-ring spectrum. This theory of units originated in Sullivan's MIT notes \cite{paper:Sullivan}, where they were used to study obstructions to the orientability of vector bundles and spherical fibrations. 

Each model of ring spectra has its own definition of the corresponding units. The definition for $E_\infty$-ring spectra goes back to May, Quinn, Ray and Tornehave \cite{book:May}. The units of a commutative symmetric ring spectrum were initially defined in work by Schlichtkrull \cite{paper:Schlichtkrull}. It was shown by Lind in~\cite{paper:Lind} that the comparison functors between the various models of ring spectra give rise to weak equivalences between their respective units. 

The motivation for the current paper was an observation that interlinks the unit spectrum $gl_1(KU^\cD)$ of a variant of complex topological $K$-theory  with the theory of $C^*$-algebras. Here, $\cD$ denotes a strongly self-absorbing $C^*$-algebra. A unital $C^*$-algebra $\cD$ belongs to this class if there is an isomorphism $\varphi \colon \cD \to \cD \otimes \cD$ that is approximately unitarily equivalent to the left tensor embedding (see Definition~\ref{ssa}). These algebras play a cornerstone role in the classification programme of separable nuclear simple $C^*$-algebras. Examples include infinite tensor products of matrix algebras (i.e.\ infinite UHF-algebras), the Jiang-Su algebra $\mathcal{Z}$ and the infinite Cuntz algebra $\Cuntz{\infty}$. To each such algebra $\cD$ one can associate a commutative symmetric ring spectrum $KU^\cD_*$ that represents the cohomology theory 
\[
    X \mapsto K_*(C(X) \otimes \cD) 
\]
on finite CW-complexes \cite[Sec.~4.1]{paper:DP-Units}, i.e.\ the operator-algebraic $K$-theory of the $C^*$-algebra of continuous $\cD$-valued functions on $X$. In particular, we have weak equivalences $KU^\C \simeq KU^\mathcal{Z} \simeq KU^{\Cuntz{\infty}}$ induced by the unit maps $\C \to \mathcal{Z}$ and $\C \to \Cuntz{\infty}$, and these three spectra represent complex topological $K$-theory $X \mapsto K^*(X)$ by the Serre-Swan theorem. 

Let $\bK$ denote the $C^*$-algebra of compact operators on a separable infinite-dimensional Hilbert space. Together with Dadarlat the second author constructed a map $\Aut{\cD \otimes \bK} \to \Omega KU^\cD_1$ in \cite{paper:DP-Units}, which factors through 
\begin{equation} \label{eqn:noneq_Aut_GL1_Map}
    \Aut{\cD \otimes \bK} \to GL_1(KU^\cD)
\end{equation} 
and extends to a map of infinite loop spaces. The abelian group $K_0(\cD)$ has a natural order structure and the spectrum $gl_1(KU^\cD)$ restricts to the spectrum of positive units $gl_1(KU^\cD)_+$ by pullback via the group homomorphism $GL_1(K_0(\cD))_+ \to GL_1(K_0(\cD))$. The map \eqref{eqn:noneq_Aut_GL1_Map} gives a weak equivalence of infinite loop spaces $\Aut{\cD \otimes \bK} \simeq \Omega^\infty gl_1(KU^\cD)_+$. In case $\cD$ is purely infinite the order structure on $K_0(\cD)$ is trivial (i.e.\ $K_0(\cD)_+ = K_0(\cD)$) and we have $\Aut{\cD \otimes \bK} \simeq \Omega^\infty gl_1(KU^\cD)$ \cite[Thm.~1.1]{paper:DP-Units}. 

Important consequences of this result arise for twisted $K$-theory, which was initially developed in \cite{paper:DonovanKaroubi}, see \cite{paper:KaroubiSurvey} for a survey. The infinite loop map from the last paragraph gives rise to an equivalence
\[
    B\Aut{\cD \otimes \bK} \simeq BGL_1(KU^\cD)_+\ .
\] 
Since the right hand side is the classifying space of (positive) twists of~$KU^\cD$, we obtain a description of twisted $KU^\cD$-theory in terms of bundles of strongly self-absorbing $C^*$-algebras. It includes the geometric twists classified by $K(\Z,3) \simeq B\Aut{\bK}$, i.e.\ the group $[X, K(\Z,3)] \cong H^3(X,\Z)$, considered in \cite{paper:AtiyahSegal, paper:Rosenberg} as a special case. 

Equivariant twisted $K$-theory (with an equivariant geometric twist) has found applications in mathematical physics through a deep theorem by Freed, Hopkins and Teleman \cite{paper:FHT3}: For a compact, simple and simply-con\-nec\-ted Lie group $G$ and $k \in \Z$ the isomorphism classes of positive energy representations of the loop group~$LG$ generate the Verlinde ring $\text{Ver}_k(G)$, which features in conformal and topological field theories. By \cite[Theorem~1]{paper:FHT3} there is a ring isomorphism 
\[
    \text{Ver}_k(G) \cong K^{\dim(G), \tau(k)}_G(G)\ ,
\]
with the fusion product on the Verlinde ring and the Pontrjagin product on twisted $K$-theory. The equivariant twist $\tau(k) \in H^3_G(G,\Z) \cong \Z$ corresponds to $k + h^\vee$, where $h^\vee$ denotes the dual Coxeter number of $G$. 

While it is not too difficult to see that the equivariant geometric twists correspond to cocycles representing classes in $H^3_G(G,\Z)$, a classification of the most general equivariant twists requires the construction of a $G$-space $BGL_1(KU)$ and has not been established. At the point of writing the only results in the literature covering units of equivariant ring spectra are to the best of our knowledge \cite{paper:Santhanam} and \cite[Ex.~5.1.17]{book:SchwedeGlobal}. Nevertheless, candidates for equivariant higher twists over $SU(n)$ in terms of $C^*$-algebra bundles have been constructed in \cite{paper:EvansPennigTwists}.

Apart from the applications in mathematical physics highlighted above, an operator-algebraic model for the units of equivariant $K$-theory will also provide an equivariant refinement of the higher Dixmier-Douady theory in~\cite{paper:DP-Units} with applications in the classification of group actions on $C^*$-algebras: to see why, let $H$ be a countable discrete group and suppose that it acts on $\cD \otimes \bK$ via  
\(
    \alpha \colon H \to \Aut{\cD \otimes \bK}
\).
Now assume that we also have a finite group $G$ that acts through automorphisms on $H$ via $\hat{\gamma}$ and in addition on $\cD \otimes \bK$ in such a way that $\alpha_{\hat{\gamma}_g(h)} = \gamma_g \alpha_h \gamma_g^{-1}$. If we have chosen functorial models for $EH$ and $BH$, then the associated bundle 
\[
    EH \times_{\alpha} \Aut{\cD \otimes \bK} \to BH
\]
comes equipped with a natural $G$-action. Denoting the set of all these $H$-actions with compatible $G$-actions by $\text{Act}_G(H, \cD \otimes \bK)$ we obtain a map
\begin{equation} \label{eqn:action_bundles}
   \text{Act}_G(H, \cD \otimes \bK) \to [BH, B\Aut{\cD\otimes \bK}]^G\ .
\end{equation}
If $\cD$ is a Kirchberg algebra in the UCT-class, $G = \{e\}$ and $H$ is amenable and torsion-free, then the above map induces a bijection between cocycle-conjugacy classes of outer actions and the homotopy set on the right hand side by deep results of Meyer \cite{paper:MeyerGroups}, Gabe and Szab\'{o} \cite{paper:GabeSzabo}. If $BG$ is in addition a finite CW-complex, then this homotopy set is the first group of a cohomology theory making it accessible to computations via algebraic topology.
In the equivariant case the right hand side should evaluate to the equivariant cohomology theory associated to the positive unit spectrum of equivariant $KU^{\cD}$-theory. In fact, \eqref{eqn:action_bundles} makes sense with much weaker assumptions on~$G$. However, equivariant stable homotopy theory becomes more intricate for more general groups.

Developing these ideas requires combining methods from equivariant stable homotopy theory with topological considerations about automorphism groups of $C^*$-algebras. In this paper we present an important first advance in this direction. As a starting point we picked a setting that is very explicit: We fix the group $G$ to be $\Z/p\Z$ for a prime $p$, and we consider a UHF-algebra~$D$ given by the infinite tensor product of a unitary finite-dimensional $G$-representation $V$, i.e.
\[
    D = \Endo{V}^{\otimes \infty}\ .
\]
This is a $G$-$C^*$-algebra in a natural way when equipped with the infinite tensor product of the action by conjugation. The restriction to prime order cyclic groups has two benefits: There are only two subgroups to consider, and some of the results obtained in \cite{paper:EP-Circle} for circle actions on UHF-algebras carry over immediately to the case of $\Z/p\Z$-actions. We then determine the $G$-equivariant homotopy type of $\Aut{D \otimes \bK}$. In particular, we show that the automorphism group is an equivariant infinite loop space associated to a naive $G$-spectrum, which we identify in our main result.
\begin{theorem}[Thm.~\ref{thm:main_thm}]
    Let $G = \Z/p\Z$ for a prime $p \in \N$. Let $V$ be a finite-dimensional unitary $G$-representation and let
    \[
        D = \Endo{V}^{\otimes \infty}
    \]
    be the associated $G$-$C^*$-algebra. The group $\Aut{D \otimes \bK}$ is a $G$-equivariant infinite loop space underlying a naive $G$-spectrum $EH_{\cI}\Aut{D \otimes \bK}$ and we have an equivalence of naive $G$-spectra
    \[
        EH_{\cI}\Aut{D \otimes \bK} \simeq gl_1(KU^D)_+\ .
    \]
\end{theorem}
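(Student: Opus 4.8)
The plan is to realise the stated equivalence as a $G$-equivariant refinement of the non-equivariant theorem \cite[Thm.~1.1]{paper:DP-Units}. Since $G = \Z/p\Z$ has only the two subgroups $\{e\}$ and $G$, a $G$-equivariant map between well-behaved naive $G$-spectra is an equivalence precisely when it is a weak equivalence of underlying spectra and a weak equivalence on $G$-fixed points. So the proof splits into three parts: (a) construct the naive $G$-spectrum $EH_{\cI}\Aut{D \otimes \bK}$ and a $G$-equivariant comparison map to $gl_1(KU^D)_+$; (b) identify the underlying spectrum, which is exactly \cite[Thm.~1.1]{paper:DP-Units} applied to the UHF-algebra underlying $D$; (c) identify the $G$-fixed points, where the applications to equivariant $K$-theory also live.

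For (a), the essential structural fact is that the infinite tensor product action of $G$ on $D = \Endo{V}^{\otimes \infty}$ is compatible with the strongly self-absorbing structure of $D$: there is a $G$-equivariant isomorphism $D \cong D \otimes D$, equivariantly homotopic to a shuffle of tensor factors, so the commutative multiplication underlying the infinite loop structure of $\Aut{D \otimes \bK}$ — assembled from $\otimes$ together with the stabilising and absorbing objects of \cite{paper:DP-Units} (Cuntz algebras $\Cuntz{\infty}$, Fock spaces and Clifford algebras) — is realised by $G$-equivariant maps. The choice $\bK = \mathcal{K}(\ell^2(G) \otimes H_0)$ with the regular representation on the $\ell^2(G)$-factor makes $D \otimes \bK$ a $G$-stable $C^*$-algebra, so that its fixed-point algebra, hence $\Aut{D \otimes \bK}^G$, carries the equivariant $K$-theoretic information. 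Since the bar construction $EH_{\cI}(-)$ and the group-completion and delooping steps are functorial they preserve the $G$-action, producing a naive $G$-spectrum whose zeroth $G$-space is $G$-equivariantly equivalent to $\Aut{D \otimes \bK}$. Analogously the assignment $\cD \mapsto KU^\cD$ and the map \eqref{eqn:noneq_Aut_GL1_Map} together with its infinite-loop enhancement are natural with respect to $*$-automorphisms of $\cD$, so the $G$-action on $D$ makes $gl_1(KU^D)_+$ a naive $G$-spectrum and turns the comparison into a $G$-equivariant map; its target is the positive units because $D$ is UHF, so $K_0(D) = \Z[1/\dim V]$ carries its usual order and $\pi_0 \Aut{D \otimes \bK} \cong GL_1(K_0(D))_+$.

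For (c), every finite-dimensional $\Z/p\Z$-representation decomposes into characters, $V \cong \bigoplus_{j=0}^{p-1} V_j \otimes \chi^j$, so it is the restriction along $\Z/p\Z \hookrightarrow \bT$ of a circle representation; consequently the $G$-action on $D$ is the restriction of the circle action studied in \cite{paper:EP-Circle}, and the $\bT$-equivariant model together with the computation of the $\bT$-fixed points there restrict to $G$. This identifies $\Aut{D \otimes \bK}^G$, hence $(EH_{\cI}\Aut{D \otimes \bK})^G$, with $gl_1$ of the spectrum representing $X \mapsto K^G_*(C(X) \otimes D)$, that is with $(gl_1(KU^D)_+)^G$, compatibly with the comparison map. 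Together with (b) this proves the equivalence of naive $G$-spectra.

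I expect the main obstacle to be part (a), and specifically the $G$-equivariant infinite loop space structure: one has to check that the homotopy-theoretic manipulations behind $EH_{\cI}(-)$ in \cite{paper:DP-Units} and \cite{paper:EP-Circle} — the passage to a flat or cofinal $\cI$-space model, the equivariant group completion, and the iterated delooping — can all be performed with the $G$-action present on the nose, or through explicit zig-zags of $G$-maps, and that at every stage the zeroth $G$-space remains $G$-equivariantly weakly equivalent to $\Aut{D \otimes \bK}$ itself. In particular one needs the tensor-product multiplication to be not merely $G$-equivariant but equivariantly grouplike, so that an infinite delooping machine applies with the expected output. Once this $G$-equivariant machine and the $G$-equivariant comparison map are in place, the identification of the underlying spectrum is inherited from \cite[Thm.~1.1]{paper:DP-Units} and that of the $G$-fixed points from \cite{paper:EP-Circle}.
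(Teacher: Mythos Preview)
Your overall reduction is right and matches the paper: by Lemma~\ref{deloop} it suffices to produce a map of commutative $\cI$-$O_G$-monoids $\cG_D \to GL_1^\bullet KU^D$ and check that $\theta_1^H$ is an equivalence for $H \in \{\{e\}, \Z/p\Z\}$. But two of your three steps have genuine gaps.

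In part (a), the construction is too loose. The comparison map in the paper is not obtained from abstract naturality of the assignment $\cD \mapsto KU^\cD$; it is the explicit map $\theta_n^H(\alpha)(f) = \hat\eta_n(f) \otimes \alpha(1_D \otimes e)$, built from the Bott element and evaluation at a fixed projection. Your reference to ``Cuntz algebras $\Cuntz{\infty}$, Fock spaces'' suggests you have a different model of $KU^D$ in mind; the paper's model uses only $\grS$, Clifford algebras, and $(D \otimes \bK)^{\otimes n}$, and the comparison map is tailored to it. You should write this map down, because the heart of the proof is analysing exactly what it does on $\pi_n$.

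Part (c) is where the argument breaks. First, restriction along $\Z/p\Z \hookrightarrow \bT$ goes the wrong way on fixed points: $\eqAut{\Z/p\Z}{D \otimes \bK}$ is strictly larger than $\eqAut{\bT}{D \otimes \bK}$, so computing the latter in \cite{paper:EP-Circle} does not give you the former. The paper instead reruns the arguments of \cite{paper:EP-Circle} with $\Z/p\Z$ in place of $\bT$ (Lemma~\ref{lem:contr_htpies} through Theorem~\ref{homotopyU}), and this requires genuine modifications --- $R(\Z/p\Z) = \Z[t]/(t^p-1)$ has zero divisors, so the surjectivity argument in Lemma~\ref{units+} and the stable-range argument in Theorem~\ref{homotopyU} (via Lemma~\ref{coefficients}) are not mere restrictions. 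Second, \cite{paper:EP-Circle} does not establish a spectrum-level comparison with $gl_1$ of an equivariant $K$-theory spectrum; it computes the homotopy type of $\eqAut{\bT}{D \otimes \bK}$. Knowing $\pi_*(\eqAut{H}{D \otimes \bK}) \cong \pi_*(\Omega KU_1^D)^H$ abstractly is not enough: you must show that the specific map $\theta_1^H$ realises this isomorphism. The paper does this by factoring $\theta_1^H$ through $\Proj{1 \otimes e}{(D \otimes \bK)^H}$, applying Bott periodicity on the target side, and then closing a square whose remaining edge is the map $\pi_{n-1}(U(D^H)) \to K_1(C_0(S^{n-1},*) \otimes D^H)$ of Remark~\ref{K1}, interpreted via Hilbert-module bundles (Example~\ref{indexmap}). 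That diagram chase is the actual content of the theorem and is absent from your sketch.
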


The group $\Aut{D \otimes \bK}$ has two equivariant deloopings: one arising from the group structure, the other one from the tensor product. Just as in the non-equivariant setting these two agree. As a result the first group of the cohomology theory associated to $gl_1(KU^D)_+$ classifies equivariant $C^*$-algebra bundles with fibre $D \otimes \bK$ in the following sense:
\begin{corollary}[Cor.~\ref{cor:bdl_classification}]
Let $E_D^*(X) = gl_1(KU^D)_+^*(X)$ be the $\Z/p\Z$-equivariant cohomology theory associated to the spectrum $gl_1(KU^D)_+$. Let~$X$ be a finite CW-complex. Then 
\[
    E_D^0(X) \cong [X, \Aut{D \otimes \bK}]^{\Z/p\Z} \quad \text{and} \quad E_D^1(X) \cong [X, B\Aut{D \otimes \bK}]^{\Z/p\Z}\ .
\]
In particular, isomorphism classes of $\Z/p\Z$-equivariant $C^*$-algebra bundles with fibres isomorphic to the $\Z/p\Z$-algebra $D \otimes \bK$ over $X$ form a group with respect to the fibrewise tensor product, which is isomorphic to $E_D^1(X)$. 
\end{corollary}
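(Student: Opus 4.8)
The plan is to feed Theorem~\ref{thm:main_thm} into two standard mechanisms: representability of an equivariant cohomology theory by the spaces of its defining $\Omega$-$G$-spectrum, and the classification of $G$-equivariant $C^*$-algebra bundles by an equivariant bar construction. By the main theorem we have an equivalence of naive $G$-spectra $gl_1(KU^D)_+ \simeq EH_{\cI}\Aut{D \otimes \bK}$; this spectrum is connective, its zeroth space is weakly equivalent to the topological group $\Aut{D \otimes \bK}$ with the point-norm topology, and --- by the remark following the main theorem, which identifies the delooping of $\Aut{D \otimes \bK}$ coming from composition of automorphisms with the one coming from the infinite loop structure --- its first space is $B\Aut{D \otimes \bK}$. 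Picking an $\Omega$-$G$-spectrum model, for a finite $G$-CW complex $X$ the associated $\Z$-graded equivariant cohomology theory satisfies $E^n_G(X) \cong [X, E_n]^{\Z/p\Z}$ for every $n \ge 0$, with $E_n$ the $n$-th space. Since $\Aut{D \otimes \bK}$ and $B\Aut{D \otimes \bK}$ are equivariant infinite loop spaces, hence of the homotopy type of $G$-CW complexes and in particular well-pointed, evaluating at $n = 0, 1$ gives the two displayed isomorphisms.

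It remains to identify $[X, B\Aut{D \otimes \bK}]^{\Z/p\Z}$ with the set of isomorphism classes of $\Z/p\Z$-equivariant $C^*$-algebra bundles over $X$ with fibre the $\Z/p\Z$-algebra $D \otimes \bK$. Such a bundle is a locally trivial fibre bundle over the $G$-space $X$ together with a $G$-action covering that on $X$, equivariantly locally trivial with local model $U \times (D \otimes \bK)$ on which $G$ acts diagonally; its structure group is $\Gamma := \Aut{D \otimes \bK}$, carrying the point-norm topology and the $G$-action $g \cdot \varphi = \gamma_g \varphi \gamma_g^{-1}$ induced by the given action $(\gamma_g)$ on $D \otimes \bK$. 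Because $\Gamma$ is a metrizable (Polish) group with a $G$-action by group automorphisms, is well-pointed of the homotopy type of a $G$-CW complex, and $X$ is a finite $G$-CW complex, the equivariant Milnor--Dold classification of fibre bundles of a fixed local type applies: passage to transition functions sets up a natural bijection between isomorphism classes of such bundles and $[X, B\Gamma]^{\Z/p\Z}$, where $B\Gamma = |N_\bullet \Gamma|$ carries the induced $G$-action and $(B\Gamma)^H \simeq B(\Gamma^H)$ for each subgroup $H \le G$. The point needing care is that every $\Z/p\Z$-equivariant $D \otimes \bK$-bundle is genuinely equivariantly locally trivial with the prescribed local model --- no exotic $G$-representations occur on the fibres --- which is part of the definition we adopt and is consistent with the non-equivariant triviality results for $D \otimes \bK$-bundles from \cite{paper:DP-Units}.

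For the group statement, the trivial bundle $X \times (D \otimes \bK)$ is the neutral element, and the fibrewise (minimal, $C(X)$-linear) tensor product of two bundles with fibre $D \otimes \bK$ is again such a bundle, because $D \otimes D \cong D$ and $\bK \otimes \bK \cong \bK$ equivariantly: for $D$ this is the interleaving isomorphism $\Endo{V}^{\otimes \infty} \otimes \Endo{V}^{\otimes \infty} \cong \Endo{V}^{\otimes \infty}$, which is $G$-equivariant for the diagonal conjugation action, and for $\bK$ one absorbs the extra tensor factor using $\C[G] \otimes W \cong \C[G]^{\oplus \dim W}$. Under the bijection of the previous paragraph, fibrewise tensor product corresponds to the abelian group operation on $[X, B\Aut{D \otimes \bK}]^{\Z/p\Z}$ coming from the infinite loop structure of $gl_1(KU^D)_+$, i.e.\ to the addition of $E_D^{*}$; this is the equivariant counterpart of the identification of $D \otimes \bK$-bundles with twists of $KU^D$-theory in \cite{paper:DP-Units}, and relies on the agreement of the two deloopings of $\Aut{D \otimes \bK}$ stated in the remark after the main theorem. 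Consequently the monoid of isomorphism classes of equivariant $D \otimes \bK$-bundles under fibrewise tensor product is a group, isomorphic to $E_D^1(X)$. The main obstacle is the bundle classification of the second paragraph: reconciling the operator-algebraic notion of a $\Z/p\Z$-equivariant $D \otimes \bK$-bundle with a $(G,\Gamma)$-bundle of a single local type, and running the equivariant classification machinery for the non-compact group $\Gamma$ with its point-norm topology.
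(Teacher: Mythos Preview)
Your proposal is correct and follows essentially the same strategy as the paper, which simply states that the corollary ``combines Lemma~\ref{lem:comp_vs_tensor} with Theorem~\ref{thm:main_thm}''. You correctly invoke the spectrum equivalence from the main theorem, the agreement of the two deloopings (what you call ``the remark following the main theorem'' is Lemma~\ref{lem:comp_vs_tensor}, which in the paper actually precedes Theorem~\ref{thm:main_thm}), and the classification of equivariant $D\otimes\bK$-bundles by $B\Aut{D\otimes\bK}$; the paper establishes the latter explicitly via Proposition~5.1 using tom~Dieck's framework and the appendix results on well-pointedness and $G$-CW type, whereas you appeal to a general ``equivariant Milnor--Dold'' principle, but the substance is the same.
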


The article is structured as follows. In Section \ref{prel_1} we describe an infinite loop space machine that is equivariant for finite groups based on diagram spaces, called commutative $\cI$-$G$-monoids, whose output is a naive positive $\Omega$-$G$-spectrum. In Section \ref{prel_2} we turn to operator algebras: we provide some background on strongly self-absorbing $C^*$-algebras, describe our basic setup and recall how Bott periodicity can be phrased in terms of Hilbert module bundles. \\
In Section \ref{sec3} we construct an equivariant spectrum of units for a localisation of equivariant $K$-theory. This is done in two steps: in Section~\ref{spectrum} we refine the construction of a commutative symmetric ring spectrum $KU^D$ associated to a strongly self-absorbing $C^*$-algebra $D$ to the equivariant case (Definition~\ref{def:KUD}). Then we construct its equivariant units in Section~\ref{equnits} at the level of diagram spaces, and finally lift them to a $G$-spectrum of units (Definition~\ref{def:spec_units}). \\
Our goal is to compare this spectrum to the one associated to the automorphism group of the stabilisation of $D=\Endo{V}^{\otimes \infty}$ for an action of $G=\Z/p\Z$. In order to be able to do that, we interpret $\Aut{D \otimes \bK}$ as the first space in a commutative $\cI$-$G$-monoid $\cG_D$ in Section \ref{G_D}. In Section~\ref{Aut} we then collect a series of results which provide the equivariant homotopy type of this group, mainly using techniques from \cite{paper:EP-Circle}. We also compute the homotopy groups of the equivariant automorphisms in Corollary~\ref{cor:pikAut}.\\
The main result of this paper is contained in Section \ref{final_section}, where we explicitly describe a map between the $\Z/p\Z$-spectrum coming from the equivariant infinite loop space structure of $\Aut{D \otimes \bK}$ and the spectrum of $\Z/p\Z$-equivariant units of $KU^D$, and prove that it is an equivalence onto the positive units (Theorem~\ref{thm:main_thm}). In particular, this provides a classification of $\Z/p\Z$-equivariant $\Aut{D \otimes \bK}$-bundles in terms of an equivariant cohomology theory (Corollary~\ref{cor:bdl_classification}). We discuss a model for the classifying space of such equivariantly locally trivial bundles in Section \ref{final_1}, and we state and prove the main theorems in Section \ref{final_2}.\\
Constructing the equivariant classifying space $B\Aut{D \otimes \bK}$ as the geometric realisation of the nerve is only possible, if we can show that $\Aut{D \otimes \bK}$ is equivariantly well-pointed, which is the content of Lemma~\ref{lem:G-cofib} in the appendix. There we also prove a few other topological results that might be of independent interest: for example, we show that $\Aut{D \otimes \bK}$ has the $G$-homotopy type of a CW-complex (Lemma~\ref{lem:GCW-complex}) and that an evaluation map $\Aut{A} \to \Proj{}{A}$ for a finite group $G$ and a $G$-$C^*$-algebra $A$ is a Hurewicz $G$-fibration when suitably restricted in the codomain (Lemma~\ref{lem:Hurewicz}). 

\subsection*{Acknowledgements} 
The authors would like to thank John A.\ Lind, Steffen Sagave and Stefan Schwede for helpful discussions. Some parts of this work emerged from the PhD thesis of the first author, which was funded by the EPSRC grant EP/W52380X/1 (project reference: 2601068).

\section{Preliminaries}

\subsection{Preliminaries from equivariant homotopy theory}
\label{prel_1}

Let $\GTop$ be the category of based compactly generated weakly Hausdorff spaces equipped with an action of a finite group $G$ which fixes the basepoint, and based equivariant maps. Note that $\GTop$ has a model structure whose weak equivalences are morphisms $f:X \rightarrow Y$ such that $f^H: X^H \rightarrow Y^H$ is a weak equivalence in the underlying category $\Top$, for all subgroups $H \subseteq G$.

The orbits $G/H$ form a full subcategory of $\GTop$ which we denote by~$O_G$, the orbit category. Note that for subgroups $K \subseteq H \subseteq G$, we have a natural map of $G$-sets $G/K \rightarrow G/H$, and a corresponding inclusion map of fixed points $X^H \rightarrow X^K$ in the opposite direction for a $G$-space $X$. This means we have a functor
\[O_G^{\op} \rightarrow \Top \qquad \text{ given by } \qquad G/H \mapsto X^H\ .\]
\begin{definition}
    An $O_G$-\textit{space} is a functor $O_G^\op \rightarrow \Top$. Denote by \emph{$O_G$-spaces} the category of $O_G$-spaces and continuous natural transformations.
\end{definition}
As described above there is a fixed point functor $\GTop \rightarrow O_G$-spaces defined on objects by
\[
X \mapsto \big((G/H) \mapsto X^H \big)\ .
\]
By a celebrated theorem by Elmendorf \cite{paper:Elmendorf} this functor is an equivalence on homotopy categories. Its homotopy inverse can be constructed explicitly as the geometric realization of the simplicial bar construction:  
\begin{equation}\label{eqn:elmendorf}
\Phi \colon O_G\text{-spaces} \to \GTop \quad, \quad X \mapsto |B(X,O_G,M)|\ ,
\end{equation}
where $M$ is the forgetful functor realising orbits as spaces (see \cite[Sec. 8.8]{book:Kervaire} for details).

Following \cite{paper:Lind} we now define two diagram categories of $G$-spaces. Let $\mathcal{I}$ be the category of finite sets $\textbf{n}=\{1,\dots,n\}$ (including $\mathbf{0} = \emptyset$) and injective functions. It is a symmetric monoidal category via concatenation of finite ordered sets $\textbf{m} \sqcup \textbf{n}$ along with symmetric structure maps $\textbf{m} \sqcup \textbf{n} \rightarrow \textbf{n} \sqcup \textbf{m}$ given by the obvious shuffles.

\begin{definition}
An $\mathcal{I}$-$G$-\textit{space} is a functor $X \colon \mathcal{I} \rightarrow \GTop$.
An $\mathcal{I}$-$G$-space~$X$ is called a (commutative) $\mathcal{I}$-$G$-\textit{monoid} if it comes equipped with a natural transformation $\mu: X \times X \rightarrow X \circ \sqcup$ which is associative, unital (and commutative) in the sense of \cite[Def.~2.2]{paper:DP-Units}.
\end{definition}

Let $\Gamma^{\op}$ be the category of finite based sets $\textbf{n}^+=\{0,1,\dots,n\}$ with basepoint~$0$ and based maps.

\begin{definition}
A $\Gamma$-$G$-\textit{space} is a functor $\Gamma^{\op} \rightarrow \GTop$. 
A $\Gamma$-$G$-space $A$ is called \textit{special} if for every $n$ the based $G$-map 
\[A(\mathbf{n}^+) \rightarrow \big(\prod_{i=1}^{n}A(\textbf{1}^+)\big)\]
induced by projections is a weak equivalence. We refer to this map as the equivariant Segal map. 
\end{definition}

The notions of $\cI$-$O_G$-\textit{spaces} and (special) $\Gamma$-$O_G$-\textit{spaces} are defined analogously by replacing the target category by $O_G$-spaces.

There is a way to associate a special $\Gamma$-$G$-space to any commutative $\mathcal{I}$-$G$-monoid. This works just as in the non-equivariant case \cite[Construction~12.1]{paper:Lind} with some minor adjustments; for the sake of completeness, we recall how it works.

Let $X$ be a commutative $\cI$-$G$-monoid. Let $P(\textbf{n}^+)$ be the category of finite sets $0 \notin A \subset \textbf{n}^+$ and define $\mathcal{I}(\textbf{n}^+)$ to be the category of functors
$\theta: P(\textbf{n}^+) \rightarrow \mathcal{I}$
with the property that for every pair of disjoint subsets $A,B \in P(\textbf{n}^+)$ the diagram
\[\theta(A) \rightarrow \theta(A \cup B) \leftarrow \theta(B)\]
(induced by inclusions) is a coproduct diagram in $\mathcal{I}$. Morphisms in $\mathcal{I}(\textbf{n}^+)$ are natural transformations.
A functor $\theta \in \mathcal{I}(\textbf{n}^+)$ is given by the data of a collection of objects $\theta_{i}:=\theta(\{i\}) \in \mathcal{I}$ for each $1 \le i \le n$ (by definition, $\theta(\emptyset)=\textbf{0}$), plus morphisms $\theta(i) \rightarrow \theta(A)$ for each $A \subset \{1,\dots,n\}$ which assemble into a canonical isomorphism 
\begin{equation} \label{eqn:theta}
    \bigoplus_{i \in A} \theta_{i} \cong \theta(A)\ .
\end{equation}
Consider the forgetful functor $f: \mathcal{I}(\textbf{n}^+) \rightarrow \mathcal{I}^{n}$ that sends $\theta$ to $(\theta_{1}, \dots, \theta_{n})$. We get a functor $X(\textbf{n}^+):\mathcal{I}(\textbf{n}^+) \rightarrow \GTop$ given by the composite
\[X(\textbf{n}^+): \mathcal{I}(\textbf{n}^+) \xrightarrow{f} \mathcal{I}^{n} \xrightarrow{X^{n}} \GTop\]
\[\theta \mapsto \prod_{i=1}^{n}X(\theta_{i})\ .\]
Finally, define a functor $H_{\mathcal{I}}X: \Gamma^{\op} \rightarrow \GTop$ as the homotopy colimit (in the sense of \cite[Def. 5.8.1]{book:Kervaire})
\[H_{\mathcal{I}}X(\textbf{n}^+)=\hocolim_{\mathcal{I}(\textbf{n}^+)}X(\textbf{n}^+)\]
and set $H_{\mathcal{I}}X(\textbf{0}^+)=*$. Note that $H_{\mathcal{I}}X(\textbf{1}^+)=\hocolim_{\mathcal{I}}X=:X_{h \cI}$.\\
In order to make $H_{\cI}X$ into a $\Gamma$-$G$-space we need to have functoriality; this is only possible because $X$ has the structure of an commutative $\cI$-$G$-monoid. Let $\alpha: \textbf{m}^+ \rightarrow \textbf{n}^+$ be a map of finite based sets. There is a functor $\alpha_{*}: \mathcal{I}(\textbf{m}^+) \rightarrow \mathcal{I}(\textbf{n}^+)$ given by precomposition with $\alpha^{-1}$. We have a natural transformation $X(\alpha): X(\textbf{m}^+) \rightarrow X(\textbf{n}^+) \circ \alpha_{*}$ defined as
\[
\begin{tikzcd}[column sep=0.5cm,row sep=0cm]
X(\alpha)_{\theta} \colon \prod_{i=1}^{m}X(\theta_{i}) \ar[r,"\pi"] & \prod_{j=1}^{n} \prod_{i \in \alpha^{-1}(j)} X(\theta_{i}) \ar[r,"\mu"] & \prod_{j=1}^{n} X \bigl( \bigoplus_{i \in \alpha^{-1}(j)} \theta_{i} \bigr) \\
&&\cong \prod_{j=1}^{n} X(\theta(\alpha^{-1}(j)))
\end{tikzcd}
\]
where $\pi$ is the projection away from the factors indexed by elements $i \in \textbf{m}^+$ that are mapped to the basepoint by $\alpha$, and the last isomorphism is induced by the canonical isomorphism \eqref{eqn:theta}. In this definition a choice is apparently involved when ordering the product indexed by $i \in \alpha^{-1}(j)$, but commutativity of $\mu$ ensures that it does not actually matter. Finally, we lift the natural transformation $X(\alpha)$ to the first map of homotopy colimits below:
\[ \hocolim_{\mathcal{I}(\textbf{m}^+)}X(\textbf{m}^+) \rightarrow \hocolim_{\mathcal{I}(\textbf{m}^+)}X(\textbf{n}^+) \circ \alpha_{*} \xrightarrow{\alpha_{*}} \hocolim_{\mathcal{I}(\textbf{n}^+)}X(\textbf{n}^+)\ .\]
This composition defines $H_{\mathcal{I}}X(\alpha).$

\begin{prop}
The functor $H_{\mathcal{I}}X$ is a special $\Gamma$-$G$-space.
\end{prop}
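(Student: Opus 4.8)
The plan is to reduce the statement to its non-equivariant counterpart, \cite[Construction~12.1]{paper:Lind}, by evaluating on $H$-fixed points. First I would verify that $H_{\mathcal{I}}X$ is genuinely a functor $\Gamma^{\op}\to\GTop$: the structure maps $H_{\mathcal{I}}X(\alpha)$ defined above are built from the projections $\pi$, the multiplication maps $\mu$ of the commutative $\cI$-$G$-monoid $X$, the canonical isomorphisms \eqref{eqn:theta} and the induced functors $\alpha_*$, all of which are $G$-equivariant; hence each $H_{\mathcal{I}}X(\alpha)$ is a based $G$-map, and commutativity of $\mu$ makes the construction independent of the ordering choice flagged above. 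Compatibility with composition and preservation of identities then follow from associativity and unitality of $\mu$ together with naturality of \eqref{eqn:theta}, exactly as in the non-equivariant bookkeeping of \cite[Construction~12.1]{paper:Lind}.

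Next I would prove specialness, i.e.\ that for every $n$ the equivariant Segal map $\sigma_n\colon H_{\mathcal{I}}X(\textbf{n}^+)\to\prod_{i=1}^n H_{\mathcal{I}}X(\textbf{1}^+)$ induced by the projections $\rho_i\colon\textbf{n}^+\to\textbf{1}^+$ is a weak equivalence in $\GTop$, i.e.\ a weak homotopy equivalence on $H$-fixed points for every subgroup $H\subseteq G$. The crucial observation is that the indexing category $\cI(\textbf{n}^+)$ carries the trivial $G$-action, so the homotopy colimit defining $H_{\mathcal{I}}X(\textbf{n}^+)$---a geometric realisation of a bar construction whose simplices are coproducts of the $G$-spaces $\prod_i X(\theta_i)$ indexed by the discrete, $G$-fixed nerve of $\cI(\textbf{n}^+)$---commutes with the fixed-point functors $(-)^H$ for the finite group $G$. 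Together with $\bigl(\prod_i X(\theta_i)\bigr)^H=\prod_i X(\theta_i)^H$ this gives a natural homeomorphism
\[
    \bigl(H_{\mathcal{I}}X(\textbf{n}^+)\bigr)^H\;\cong\;H_{\mathcal{I}}\bigl(X^H\bigr)(\textbf{n}^+)\ ,
\]
where $X^H\colon\cI\to\Top$, $\textbf{m}\mapsto X(\textbf{m})^H$, is a commutative $\cI$-monoid (in spaces) with multiplication $\mu^H$, and under this identification $(\sigma_n)^H$ is the ordinary Segal map of $H_{\mathcal{I}}(X^H)$. It then suffices to quote the non-equivariant result: after commuting the homotopy colimit past the finite product, $\sigma_n$ is the map of homotopy colimits induced by the forgetful functor $f\colon\cI(\textbf{n}^+)\to\cI^n$, which is homotopy cofinal since the under-category $(\theta_1,\dots,\theta_n)\downarrow f$ has an initial object (namely $A\mapsto\bigoplus_{i\in A}\theta_i$ with identity injections), while $\hocolim_{\cI^n}\bigl(\,(\theta_1,\dots,\theta_n)\mapsto\prod_{i=1}^n X(\theta_i)\,\bigr)\cong\prod_{i=1}^n\hocolim_{\cI}X$ because homotopy colimits commute with finite products. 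As this holds for every $H$, each $\sigma_n$ is a weak equivalence in $\GTop$, so $H_{\mathcal{I}}X$ is special.

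The one step I expect to require genuine care---the rest being a transcription of \cite{paper:Lind}---is the interchange $(\hocolim)^H\cong\hocolim(-)^H$ and the verification that it is natural in $\Gamma^{\op}$, i.e.\ compatible with the maps $H_{\mathcal{I}}X(\alpha)$; this is precisely where finiteness of $G$ and triviality of the $G$-action on $\cI(\textbf{n}^+)$ are essential. I would make it rigorous through the skeletal filtration of the bar construction, using that $(-)^H$ commutes with filtered colimits along closed inclusions and with pushouts along $G$-cofibrations (both valid for finite $G$), so that realisation commutes with $(-)^H$. The functoriality bookkeeping, the cofinality of $f$, and the product decomposition of homotopy colimits then carry over without equivariant subtlety.
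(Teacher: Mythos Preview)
Your argument is correct and shares its core with the paper's proof: both identify the Segal map with the map of homotopy colimits induced by the forgetful functor $f\colon\cI(\mathbf{n}^+)\to\cI^n$, and both verify cofinality of $f$ by exhibiting the initial object $\theta(A)=\bigoplus_{i\in A}v_i$ in the comma category $(v\!\downarrow\! f)$. The difference is in how equivariance is handled. The paper applies the Bousfield--Kan cofinality criterion directly in $\GTop$, so the resulting map of homotopy colimits is a weak $G$-equivalence without further ado. You instead insert an intermediate reduction: commute $(-)^H$ past the homotopy colimit (using that $G$ acts trivially on the indexing category and that fixed points for a finite group commute with geometric realisation), and then invoke the non-equivariant statement for each $X^H$. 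Your route is a little longer and spends effort on the interchange $(\hocolim)^H\cong\hocolim((-)^H)$ that the paper avoids entirely; on the other hand it makes explicit why nothing equivariant is really happening, and it lets you cite the non-equivariant result as a black box rather than checking that the cofinality theorem is available in $\GTop$.
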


\begin{proof}
We need to check that the equivariant Segal map
\[H_{\mathcal{I}}X(\mathbf{n}^+) \rightarrow \big(\prod_{i=1}^{n}H_{\mathcal{I}}X(\textbf{1}^+)\big)\]
is a weak equivalence. Using the canonical isomorphism
\[(\hocolim_{\mathcal{D}}X)^{n} \cong \hocolim_{\mathcal{D}^{n}}X^{n}\]
this boils down to checking that the map of homotopy colimits induced by the forgetful functor $f$
\[ \hocolim_{\mathcal{I}(\textbf{n}^+)}X(\textbf{n}^+) \rightarrow \hocolim_{\mathcal{I}^{n}}X^{n}\]
is a weak equivalence, which in turn follows from the Bousfield-Kan cofinality criterion \cite[Thm. 5.8.15]{book:Kervaire} applied to the forgetful functor $f: \mathcal{I}(\textbf{n}^+) \rightarrow \mathcal{I}^{n}$, provided we can show that $f$ is homotopy final in the sense of \cite[Def.~5.8.13]{book:Kervaire}. This reduces to verify that for any $v=(v_{1},\dots,v_{n}) \in \mathcal{I}^{n}$, the comma category $(v/f)$ has an initial object. This is in fact the case, and such an initial object is given by the triple 
\[\bigl(v, \theta, \text{id}: v \rightarrow f(\theta)\bigr) \in (v/f)\ ,\]
where $\theta$ is the functor $P(\textbf{n}^+) \rightarrow \mathcal{I}$ given by
\[
\theta(A)=\bigoplus_{i \in A} v_i\ .\qedhere
\]
\end{proof}

\begin{remark}
    In the rest of the paper we will often work with $O_G$-spa\-ces instead of $G$-spaces. The construction outlined above can be repeated verbatim when $X$ is a commutative $\cI$-$O_G$-monoid. The output in this case is a special $\Gamma$-$O_G$-space, which we will always turn into a special $\Gamma$-$G$-space again using $\Phi$ from \eqref{eqn:elmendorf}. Hence, we will consider $H_{\cI}$ as a functor from commutative $\cI$-$O_G$-monoids to $\Gamma$-$G$-spaces. Here a key observation is that the functor $\GTop \rightarrow O_G$-spaces preserves products on the nose, hence allowing us to compare the output of this machinery to the one that we get by starting with the corresponding commutative $\cI$-$G$-monoid. As one would expect the two resulting objects are homotopy equivalent, because of Elmendorf's theorem and the fact that the homotopy colimit construction commutes with taking fixed points.
\end{remark}
    
There are several ways to get a $G$-spectrum out of a special $\Gamma$-$G$-space. All of these constructions, known in the literature as infinite loop space machines, require some sort of $G$-cofibrancy condition on the $\Gamma$-$G$-space and differ by the flavour of the $G$-spectra they produce. 
For our purposes it is enough to get a naive $G$-spectrum; in this case, the cofibrancy condition can always be assumed to hold thanks to a functorial cofibrant replacement theorem 
\cite[Prop. 2.12]{paper:MayMerlingOsorno}. The following is a straightforward generalization of the Segal machine to $G$-spaces \cite[Def. 2.22]{paper:MayMerlingOsorno}.

\begin{definition}
    Let $A$ be a special $\Gamma$-$G$-space. For each $p$, let $A[\textbf{p}^+]$ be the $\Gamma$-$G$-space that sends $\textbf{q}^+$ to $A(\textbf{p}^+ \wedge \textbf{q}^+)$. Define the classifying $\Gamma$-$G$-space $\mathbb{B}A$ to be the $\Gamma$-$G$-space whose $p$-th $G$-space is the geometric realization $|A[\textbf{p}^+]|$. Now let $\mathbb{B}^0A=A$ and iteratively define $\mathbb{B}^nA=\mathbb{B}(\mathbb{B}^{n-1}A)$. We denote by $EA$ the resulting naive $G$-spectrum with $n$-th $G$-space 
    \[(EA)_n=(\mathbb{B}^nA)(\mathbf{1}^+)\ .\]
\end{definition}

Note that this machine actually produces positive $\Omega$-$G$-spectra, and the first structure map $(EA)_0=A(\textbf{1}^+) \rightarrow \Omega (EA)_1$ is a group completion \cite[Prop. 2.18]{paper:MayMerlingOsorno}.

So far we have shown how to construct a positive $\Omega$-$G$-spectrum out of a commutative $\cI$-$O_G$-monoid. Our goal for the rest of the subsection is to show that for a stable $\cI$-$O_G$-monoid $X$ (see Definition \ref{stable}), all of the information about the homotopy type of such a spectrum (except in level $0$) is contained in the $O_G$-space $X(\textbf{1})$.

The naive $G$-spectra we obtain as the output of the equivariant Segal machine (using $\Gamma$-$G$-spaces) are spectra objects in $G$-spaces. Therefore homotopy groups and equivalences are defined as follows:
\begin{definition}
Let $n$ be an integer and $H \subseteq G$ be a subgroup. The $n$-th equivariant homotopy group $\pi_n^H X$ of a naive $G$-spectrum $X$ is defined as
\[\pi_n^H(X) = \colim_{k \to \infty} \left[S^{n + k}, X_k\right]^H \]
where $H$ acts trivially on $S^{n+k}$. We say a morphism $f:X \rightarrow Y$ of naive $G$-spectra is an equivalence if the induced map $\pi_n^H(f):\pi_n^H(X) \rightarrow \pi_n^H(Y)$ is an isomorphism for all integers $n$ and all subgroups $H \subseteq G$.

\end{definition}

\begin{lemma}
\label{+omega}
For every positive $\Omega$-$G$-spectrum $X$, every $n \ge 0$ and every subgroup $H \subseteq G$ the map
\[\pi_n(\Omega X_1^H) \rightarrow \pi_n^H(X)\]
is an isomorphism.
\end{lemma}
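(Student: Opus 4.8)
The plan is to unwind both sides of the claimed isomorphism and reduce to a statement about the structure maps of a positive $\Omega$-$G$-spectrum. By definition, $\pi_n^H(X) = \colim_{k} [S^{n+k}, X_k]^H$, where the colimit is taken along the maps induced by the adjoints $S^{n+k} \wedge (\text{structure map})$, i.e. by composing $S^{n+k} \to \Omega X_{n+k} \to \dots$ with the structure maps $\sigma_k \colon \Sigma X_k \to X_{k+1}$ of the spectrum. Since $X$ is a positive $\Omega$-$G$-spectrum, for every $k \ge 1$ the adjoint structure map $\tilde\sigma_k \colon X_k \to \Omega X_{k+1}$ is a $G$-weak equivalence, hence $X_k^H \to (\Omega X_{k+1})^H = \Omega(X_{k+1}^H)$ is a weak equivalence of spaces for every subgroup $H$. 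Passing to the adjoint description, the maps in the colimit system $[S^{n+k}, X_k]^H \to [S^{n+k+1}, X_{k+1}]^H$ are isomorphisms for all $k \ge 1$ once we have stabilised far enough: the map $[S^{n+k},X_k]^H = \pi_{n+k}(X_k^H) \to \pi_{n+k+1}(X_{k+1}^H) = [S^{n+k+1},X_{k+1}]^H$ factors as $\pi_{n+k}(X_k^H) \xrightarrow{(\tilde\sigma_k)_*} \pi_{n+k}(\Omega X_{k+1}^H) \cong \pi_{n+k+1}(X_{k+1}^H)$, where the first arrow is an isomorphism because $\tilde\sigma_k$ is a weak equivalence on $H$-fixed points and the second is the standard loop-suspension adjunction isomorphism.

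Consequently the colimit defining $\pi_n^H(X)$ stabilises already at the first term with $k \ge 1$, namely $k = 1$: we obtain $\pi_n^H(X) \cong [S^{n+1}, X_1]^H = \pi_{n+1}(X_1^H)$. On the other hand $\pi_n(\Omega X_1^H) = \pi_n(\Omega(X_1^H)) \cong \pi_{n+1}(X_1^H)$ again by the loop-suspension adjunction (this uses $n \ge 0$, so that $n+1 \ge 1$ and the based homotopy group is the ordinary one). It remains to check that the composite of these identifications agrees with the map in the statement, i.e. that the map $\pi_n(\Omega X_1^H) \to \pi_n^H(X)$ induced by including $\Omega X_1$ as the first term of the telescope is precisely the one realising this isomorphism; this is immediate from the construction, since by definition the map to the colimit sends a class in $\pi_{n+1}(X_1^H) = \pi_n(\Omega X_1^H)$ to its image under the structure maps, and we have just shown every such structure map is an isomorphism.

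The only mild subtlety — and the step I would be most careful about — is the bookkeeping at the bottom of the telescope: the defining colimit for $\pi_n^H(X)$ runs over all $k \ge 0$, and the term $k = 0$ need not inject, since $\tilde\sigma_0 \colon X_0 \to \Omega X_1$ is only a group completion, not an equivalence. So one must make sure to index the cofinal subsystem starting at $k = 1$, and observe that this is harmless because a cofinal subsequence computes the same colimit. Everything else is a formal consequence of the loop-suspension adjunction together with the defining property of a positive $\Omega$-$G$-spectrum that $X_k \xrightarrow{\simeq} \Omega X_{k+1}$ equivariantly for $k \ge 1$, applied fixed-point-wise for each subgroup $H \subseteq G$. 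I would also remark that naturality in $X$ is evident since every identification used is natural, though the statement as phrased does not require it.
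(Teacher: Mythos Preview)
Your proof is correct and follows essentially the same approach as the paper's own proof: both identify $\pi_n^H(X)$ with the colimit $\colim_k \pi_n((\Omega^k X_k)^H)$ and observe that the positive $\Omega$-$G$-spectrum condition makes $\Omega X_1 \to \Omega^k X_k$ an equivariant weak equivalence for all $k \ge 1$, so the colimit stabilises at $k=1$. The paper states this in a single sentence, whereas you spell out the adjunction bookkeeping and the cofinality of the $k \ge 1$ subsystem explicitly; your extra care about the $k=0$ term is well placed but not new content.
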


\begin{proof}
Note that the right hand side is $\pi_n^H(\Omega^k X_k)$ and by the definition of a positive $\Omega$-$G$-spectrum we have that $\Omega X_1 \to \Omega^{k} X_{k}$ is an equivariant weak equivalence for $k \ge 1$.
\end{proof}

\begin{definition}
\label{stable}
    An $\cI$-$O_G$-space is called stable if all morphisms $\mathbf{m} \to \mathbf{n}$ in~$\cI$ with $m > 0$ are mapped to weak equivalences for all cosets~$G/H$.
\end{definition}

\begin{remark}
\label{rem_stable}
    The category $\cI$ is filtered and the homotopy group functor $\pi_*$ preserves filtered colimits. Moreover, taking fixed points commutes with homotopy colimits for actions of finite groups. This implies that for a stable $\cI$-$O_G$-space $Z$ we have
    \[
    Z(\textbf{1})(G/H) \simeq \Phi(Z(\mathbf{1}))^H \simeq ((\Phi\circ Z)_{h\cI})^H \simeq Z(G/H)_{h\cI}\ .\]
\end{remark}

\begin{lemma}
\label{deloop}
Let $X, Y$ be two \textit{stable} commutative $\cI$-$O_G$-monoids such that $\pi_0(X(\mathbf{1})(G/H))$ and $\pi_0(Y(\mathbf{1})(G/H))$ are groups for all subgroups $H \subseteq G$ and let $f: X \rightarrow Y$.
We have the following commutative diagram
\[\begin{tikzcd}
	{\pi_{n}^{H}(EH_{\mathcal{I}}X)} && {\pi_{n}^{H}(EH_{\mathcal{I}}Y)} \\
	{\pi_{n+1} (((EH_{\mathcal{I}}X)_1)^{H})} && {\pi_{n+1} (((EH_{\mathcal{I}}Y)_1)^{H})} \\
	{\pi_{n}(X(\mathbf{1})(G/H))} && {\pi_{n}(Y(\mathbf{1})(G/H))}
	\arrow[from=1-1, to=1-3]
	\arrow["\simeq", from=2-1, to=1-1]
	\arrow[from=2-1, to=2-3]
	\arrow["\simeq"', from=2-3, to=1-3]
	\arrow[from=3-1, to=3-3]
	\arrow["\simeq"', from=3-3, to=2-3]
	\arrow["\simeq", from=3-1, to=2-1]
\end{tikzcd}\]
In particular, $f$ induces an isomorphism on $\pi_n^H$ of the corresponding $G$-spectra if and only if the map
\[\pi_{n}(X(\mathbf{1})(G/H)) \rightarrow \pi_{n}(Y(\mathbf{1})(G/H))\]
is an isomorphism.
\end{lemma}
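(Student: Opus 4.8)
The plan is to identify the vertical arrows of the diagram as natural isomorphisms and then deduce the stated equivalence by a diagram chase.

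First I would handle the top square. Since the Segal machine produces positive $\Omega$-$G$-spectra, $EH_{\cI}X$ is one, so Lemma~\ref{+omega} supplies a natural isomorphism $\pi_n\big(\Omega((EH_{\cI}X)_1)^H\big)\cong\pi_n^H(EH_{\cI}X)$. As $H$ acts trivially on $S^1$ we have $\Omega((EH_{\cI}X)_1)^H=(\Omega(EH_{\cI}X)_1)^H$ and hence $\pi_n(\Omega((EH_{\cI}X)_1)^H)=\pi_{n+1}(((EH_{\cI}X)_1)^H)$, which is exactly the stated upper vertical map. The middle horizontal arrow is simply $\pi_{n+1}$ of the map $((EH_{\cI}X)_1)^H\to((EH_{\cI}Y)_1)^H$ induced by the functor $EH_{\cI}(-)$, and the upper square commutes because the isomorphism of Lemma~\ref{+omega} is built from the structure maps of the spectrum, hence is natural in the input.

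Next I would treat the bottom square, whose vertical maps are the substantial ones. Recall that $(EH_{\cI}X)_1$ is the classifying space of the special $\Gamma$-$G$-space $H_{\cI}X$. Since geometric realization and $\Omega$ commute with $(-)^H$ for finite $G$, the space $((EH_{\cI}X)_1)^H$ is the classifying space of the $\Gamma$-space $(H_{\cI}X)^H$, and the latter is again special, because the equivariant Segal maps of $H_{\cI}X$ are $G$-weak equivalences and hence weak equivalences on $H$-fixed points. Its underlying space is $(H_{\cI}X(\mathbf 1^+))^H=(X_{h\cI})^H$, which by stability of $X$ and Remark~\ref{rem_stable} is weakly equivalent to $X(\mathbf 1)(G/H)$ compatibly with the monoid structures; in particular its $\pi_0$ is a group by hypothesis, so $(H_{\cI}X)^H$ is very special. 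I would then invoke the Segal group completion theorem -- equivalently, the fact from \cite[Prop.~2.18]{paper:MayMerlingOsorno} that the first structure map $A(\mathbf 1^+)\to\Omega(EA)_1$ is a group completion, which is a weak equivalence once its source is already grouplike -- to conclude that
\[
(H_{\cI}X)^H(\mathbf 1^+)\ \longrightarrow\ \Omega\big(((EH_{\cI}X)_1)^H\big)
\]
is a weak equivalence. Composing this with the identification from Remark~\ref{rem_stable} produces the natural isomorphism $\pi_n(X(\mathbf 1)(G/H))\cong\pi_{n+1}(((EH_{\cI}X)_1)^H)$, and naturality of the group completion map and of the stability equivalence gives commutativity of the bottom square.

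With both squares commutative and all four vertical arrows isomorphisms, the top horizontal arrow is an isomorphism precisely when the bottom one is, which is the claim. The step I expect to need the most care is the group completion argument: one must check that passing to $H$-fixed points is legitimate -- namely that $(H_{\cI}X)^H$ really is a special and very special $\Gamma$-space, so that the classical (non-equivariant) Segal theorem applies levelwise in $\Gamma^{\op}$ -- and that the stability identification of $(X_{h\cI})^H$ with $X(\mathbf 1)(G/H)$ respects the $\Gamma$-space, hence $H$-space, structures, since that compatibility is precisely what lets the hypothesis ``$\pi_0$ is a group'' enter the proof.
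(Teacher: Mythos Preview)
Your proposal is correct and follows essentially the same route as the paper: the upper vertical isomorphisms come from Lemma~\ref{+omega}, and the lower ones from the group completion theorem combined with the stability identification of Remark~\ref{rem_stable}. The paper's proof is terser (it invokes the equivariant group completion statement from \cite[Prop.~2.18]{paper:MayMerlingOsorno} directly rather than first passing to $H$-fixed points), but your more explicit verification that $(H_{\cI}X)^H$ is special and very special is a perfectly valid way to justify the same step.
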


\begin{proof}
The two upper vertical arrows are isomorphisms by Lemma \ref{+omega}. Now let $Z \in \{X,Y\}$. Note that $\pi_0^H(Z(\mathbf{1})) \cong \pi_0^H(Z_{h\cI}) \cong \pi_0^H((EH_\cI Z)_0)$ is a group by assumption. Observe that $((EH_\cI Z)_0)^H \simeq (\Omega (EH_{\cI} Z)_1)^H$ by the group completion theorem and 
\[((EH_{\mathcal{I}}Z)_0)^H \simeq (H_{\mathcal{I}}Z(\textbf{1}^+))^H \simeq ((\Phi \circ Z)_{h\cI})^H\ .\]
Thus the two lower vertical arrows are isomorphisms as well by Remark~\ref{rem_stable}.
\end{proof}

\subsection{Preliminaries from operator algebras.} \label{prel_2}

We will look at $G$-$C^*$-alge\-bras that arise from infinite tensor products of $\Z/p\Z$-representations. Such algebras belong to the class of strongly self-absorbing $C^*$-algebras \cite{paper:TomsWinter}.
\begin{definition}
\label{ssa}
A separable, unital $C^*$-algebra $D$ is called \textit{strongly self-absorbing} if it tensorially absorbs itself, via an isomorphism $\psi: D \rightarrow D \otimes D$, in such a way that $\psi$ is asymptotically unitarily equivalent to the left tensor embedding $d \mapsto d \otimes 1$.    
\end{definition}

To put this class into context note that some properties that are essential in the classification programme of nuclear, separable, simple $C^*$-algebras are related to tensorial absorption of strongly self-absorbing ones:
\begin{enumerate}[(i)]
    \item If $A$ is separable, simple and nuclear, then $A$ is purely infinite if and only if $A \otimes \Cuntz{\infty} \cong A$ \cite[Thm.~3.15]{paper:KirchbergPhillips}, \cite[Thm.~7.2.6 (ii)]{book:RordamStormer}. Here, $\Cuntz{\infty}$ denotes the infinite Cuntz algebra, which is strongly self-absorbing (and therefore purely infinite).  
    \item Separable, simple, unital, nuclear $C^*$-algebras in the UCT class with at most one trace are classified up to $\mathcal{Z}$-stability by their ordered $K$-theory \cite[Cor.~E]{paper:TWW}. Here, $\mathcal{Z}$ denotes the Jiang-Su algebra, which is strongly self-absorbing and ``up to $\mathcal{Z}$-stability'' means after taking a tensor product with $\mathcal{Z}$.
    \item Let $A$ be a simple, separable, unital and nuclear $C^*$-algebra. Then $A \otimes \Cuntz{2} \cong \Cuntz{2}$ \cite[Cor.~3.8]{paper:KirchbergPhillips}. Here, $\Cuntz{2}$ is the Cuntz algebra on two generators, which is also strongly self-absorbing.  
\end{enumerate}
Apart from $\C$, $\mathcal{Z}$, $\Cuntz{\infty}$ and $\Cuntz{2}$ other examples of strongly self-absorbing $C^*$-algebras arise from UHF-algebras, which are infinite tensor products of matrix algebras: Let $n \in \N$ and consider the sequence
\[ 
   \C \to \dots \to M_n(\C)^{\otimes k} \to M_n(\C)^{\otimes (k+1)} \to M_n(\C)^{\otimes (k+2)} \to \dots\ ,
\]
where the connecting homomorphisms are given by $T \mapsto T \otimes 1_n$. Denote the colimit of this sequence in the category of unital $C^*$-algebras by $M_n^{\otimes \infty}$. If $p_1, \dots, p_r$ are the prime factors of $n$, then we have 
\[
    M_n^{\otimes \infty} \cong M_{p_1}^{\otimes \infty} \otimes \dots \otimes M_{p_r}^{\otimes \infty}\ .
\]
For an arbitrary subset $P$ of the prime numbers we therefore define (by slight abuse of notation) 
\[
    M_P^{\otimes \infty} = \bigotimes_{p \in P} M_p^{\otimes \infty}\ .
\]
If $P$ is the set of all prime numbers, the resulting algebra is called the universal UHF-algebra, usually denoted by $\mathcal{Q}$, because $K_0(\mathcal{Q}) \cong \Q$. A list of all strongly self-absorbing $C^*$-algebras in the UCT-class can be found in Figure~\ref{fig:ssa_diamond}.
\begin{figure}[htp]
    \centering
    \begin{tikzcd}[column sep=1.4cm,row sep=0.2cm]
        & \mathcal{Z} \ar[dd] \ar[r] & M_P^{\otimes \infty} \ar[r] \ar[dd] & \mathcal{Q} \ar[dd] \ar[dr] \\ 
        \C \ar[ur] \ar[dr] &&&& \Cuntz{2} \\
        & \Cuntz{\infty} \ar[r] & M_P^{\otimes \infty} \otimes \Cuntz{\infty} \ar[r] & \mathcal{Q} \otimes \Cuntz{\infty} \ar[ur]
    \end{tikzcd}
    \caption{\label{fig:ssa_diamond}The list of strongly self-absorbing $C^*$-algebras in the UCT class. An arrow indicates tensorial absorption (e.g.\ $\Cuntz{\infty} \otimes \mathcal{Z} \cong \Cuntz{\infty}$). }
    \label{fig:ssa_diamond}
\end{figure}
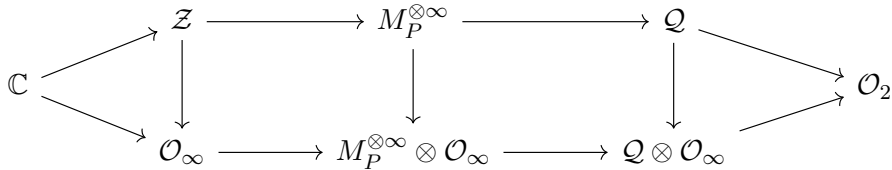

Let $D$ be a strongly self-absorbing $C^*$-algebra and consider the functor $X \mapsto K_*(C(X) \otimes D)$. The tensor product induces a ring structure on $K_*(C(X) \otimes D)$, given by
\[
\begin{tikzcd}[column sep=0.6cm,every matrix/.append style={nodes={font=\small}}]
    K_*(C(X) \otimes D) \times K_*(C(X) \otimes D) \ar[r] & K_*(C(X \times X) \otimes D) \ar[r,"\Delta^*"] & K_*(C(X) \otimes D)
\end{tikzcd}
\]
where the first arrow takes the tensor product and applies the inverse isomorphism $\psi^{-1} \colon D \otimes D \to D$, and $\Delta \colon X \to X \times X$ denotes the diagonal map. The multiplication does not depend on the choice of $\psi$ because $\Aut{D}$ is contractible \cite[Thm. 2.3]{paper:DP-Dixmier} and $K$-theory is homotopy invariant. Thus, the assignment $X \rightarrow K_*(C(X) \otimes D)$ defines a multiplicative cohomology theory on finite CW-complexes.

Because of its infinite tensor product structure it is easy to construct interesting group actions on UHF-algebras. Let $G$ be a discrete group and let~$\rho \colon G \to U(V)$ be a finite-dimensional unitary $G$-representation, then $G$ acts by conjugation on $\Endo{V}$. This $G$-action extends to the UHF-algebra
\[
    D = \Endo{V}^{\otimes \infty}\ .
\]
Note that while the action of $G$ on $\Endo{V}$ is inner, this is no longer the case for the induced action on $D$, where the action is in general only approximately inner. In the next section we will combine the above two observatios and construct a $G$-spectrum $KU^D$ representing the equivariant cohomology theory $X \mapsto K_*^G(C(X) \otimes D)$.

We will also need the Swan theorem in $K$-theory with coefficients in a $C^*$-algebra $A$: Recall that to any projection valued function in $C(X,M_n(A))$ one can associate a finitely generated projective Hilbert $A$-module bundle, and vice versa (see \cite[Thm. 3.14]{paper:Schick}). If we denote by $K(X,A)$ the Grothendieck group of isomorphism classes of such bundles over a compact Hausdorff space~$X$, we have $K(X,A) \cong K_0(C(X) \otimes A)$ \cite[Prop. 3.17]{paper:Schick}. As an example, we illustrate how this correspondence allows for a more intuitive description of the index map in $K$-theory.

\begin{example}
\label{indexmap}
Let $A$ be a unital $C^*$-algebra. By Bott periodicity $K_1(A)$ is isomorphic to $K_0(SA)$ where $SA=C_0((0,1),A)$ is the suspension of~$A$. This isomorphism has the following explicit description (see for instance \cite[Thm. 10.1.3]{book:RLL}): let $u \in M_{2n}(A)$ and choose a path 
\[
    \gamma \colon (0,1) \rightarrow U(M_{2n}(A)) \quad \text{with } \gamma(0)=
\begin{pmatrix}
1 & 0\\
0 & 1
\end{pmatrix}
\text{ and } \gamma(1)=
\begin{pmatrix}
u & 0\\
0 & u^*
\end{pmatrix}\ . 
\]
The image of $[u] \in K_1(A)$ is the formal difference of projections 
\[
\bigg[\gamma 
\begin{pmatrix}
1 & 0\\
0 & 0
\end{pmatrix}
\gamma^*\bigg] - \bigg[
\begin{pmatrix}
1 & 0\\
0 & 0
\end{pmatrix}\bigg]\ ,
\]
which defines an element in $K_0(SA)$. This element corresponds to the formal difference $[\mathcal{H}_{\gamma}]-[\underline{\mathcal{H}}]$ of Hilbert $A$-module bundles over $S^1$, where
\[\mathcal{H}_{\gamma}=\{([t],v) \in S^1 \times \C^2 \otimes A \,\,| \,\,\gamma(t)^*
\begin{pmatrix}
1 & 0\\
0 & 0
\end{pmatrix}
\gamma(t)v=v\}\]
and $\underline{\mathcal{H}}$ is the trivial bundle. To the same unitary $u \in U(A)$ we can also associate another Hilbert $A$-module bundle over $S^1$, defined by
\[\mathcal{H}_u=(0,1) \times A\,/ \sim \ , \qquad \text{where } (1,a) \sim (0,ua)\ .\]
The upshot is that this simpler construction already encodes the suspension isomorphism (up to a basepoint shift). More precisely, there is an isomorphism of Hilbert $A$-module bundles
\[\mathcal{H}_u \xrightarrow{\cong} \mathcal{H}_{\gamma}\ ,\]
defined by $(t,a) \mapsto ([t],\gamma(t)^*(e_1 \otimes a))$.
\end{example}

\section{The $G$-spectrum of units of $KU^D$-theory}
\label{sec3}

\subsection{The spectrum $KU^D$}
\label{spectrum}
In this section we will construct a $G$-equivariant commutative symmetric ring spectrum $KU^D$ for $G = \Z/p\Z$ associated to a functor taking values in graded $G$-$C^*$-algebras, i.e.\ $\Z/2\Z$-graded $C^*$-algebras together with an action by the group $G$ that preserves the grading. 

\begin{definition}
    Let $\Sigma$ be the category of finite sets and bijective maps. This is a symmetric monoidal category with respect to the coproduct $\sqcup$. Let $\GCStar$ be the category of graded $G$-$C^*$-algebras and equivariant grading-preserving $*$-homomorphisms. This is also a symmetric monoidal category with respect to the graded minimal tensor product $\otimes$. A \textit{coefficient system} is a symmetric monoidal functor 
    \[
        C \colon (\Sigma, \sqcup) \to (\GCStar, \otimes)\ .
    \]
    The tensor product of two coefficient systems $C_1$ and $C_2$ is defined to be  
    \[
        (C_1 \otimes C_2)(S) = C_1(S) \otimes C_2(S)
    \]
    for a finite set $S$. Let $\eta^i_{S,T} \colon C_i(S \sqcup T) \to C_i(S) \otimes C_i(T)$. The corresponding natural isomorphism for $C_1 \otimes C_2$ is 
    \[
    \begin{tikzcd}
        (C_1 \otimes C_2)(S \sqcup T) =
         C_1(S \sqcup T) \otimes C_2(S \sqcup T) \arrow[d,"\eta^1_{S,T} \otimes \eta^2_{S,T}"]  \\
         C_1(S) \otimes C_1(T) \otimes C_2(S) \otimes C_2(T) \arrow[d,"\id{} \otimes \text{flip} \otimes \id{}"] \\
         (C_1 \otimes C_2)(S) \otimes (C_1 \otimes C_2)(T) = C_1(S) \otimes C_2(S) \otimes C_1(T) \otimes C_2(T)
    \end{tikzcd}    
    \]
    where flip denotes the symmetry of $\GCStar$ (taking the grading into account).
\end{definition}

Our main example of a coefficient system is constructed as follows: let $V$ be a finite-dimensional complex inner product space and let $\rho \colon G \to U(V)$ be a unitary representation of $G$. Let
\begin{equation} \label{eqn:D}
    D = \Endo{V}^{\otimes \infty}
\end{equation}
be the infinite UHF-algebra associated to $V$. The infinite tensor product of the adjoint action of $G$ on $\Endo{V}$ turns this into a $G$-$C^*$-algebra, which we consider to be trivially graded. 

To define the stabilisation of $D$, we need the compact operators, which we turn into a $G$-$C^*$-algebra as follows: let $H_0$ be an infinite-dimensional separable Hilbert space and define $H_G = \ell^2(G) \otimes H_0$, where $\ell^2(G)$ is the finite-dimensional Hilbert space given by the direct sum over all irreducible representations of $G$. Let $\bK = \bK(H_G)$ be the compact operators on $H_G$. This is a $G$-$C^*$-algebra where $G$ acts on $\bK(\ell^2(G) \otimes H_0)$ via the adjoint action. We consider $D \otimes \bK$ equipped with the tensor product action of $G$, which we denote by $\alpha$, and the trivial grading.

\begin{definition}
    Let $C$ be a coefficient system. We call it \emph{stable} if for every finite set $S$ we have $C(S) \otimes \bK \cong C(S)$ as graded $G$-$C^*$-algebras. 
\end{definition}

To define the ring spectrum we will need two non-trivially graded $C^*$-algebras that feature in the definition of $KU^D$. The first one is the graded suspension algebra $\grS = C_0(\R)$ equipped with the grading by odd and even functions. This is not only an algebra, but also a coassociative and cocommutative coalgebra with respect to the comultiplication
\(
    \Delta \colon \grS \to \grS \otimes \grS \ ,
\)
where the tensor product in the codomain is the graded one. 

The second $\Z/2\Z$-graded $C^*$-algebra needed in the construction is the complex Clifford algebra. Let $S \in \Sigma$ be a finite set and let $\C[S]$ be the finite-dimensional complex inner product space equipped with the symmetric bilinear form 
\[
    b(v, w) = \sum_{s \in S} v_s w_s\ ,
\]
where $v_s, w_s$ denote the components with respect to the orthonormal basis~$S$. The Clifford algebra $\Cliff{S}$ is defined to be the quotient
\[
    \Cliff{S} = T(\C[S])/I(\C[S],b)
\]
where $T(W) = \bigoplus_{n \in \N_0} W^{\otimes n}$ is the tensor algebra and $I(W,b)$ is the ideal generated by $v \otimes w - w \otimes v - 2b(v,w)\,1$ for $v,w \in W$ where $b$ is a bilinear form on $W$. To define the grading we declare the elements $w \in W$ to be odd. There is a natural isomorphism of graded algebras
\[
    \Cliff{S \sqcup T} \cong \Cliff{S} \otimes \Cliff{T}\ ,
\]
which turns $S \mapsto \Cliff{S}$ into a coefficient system with trivial $G$-action. For $n \in \N$ let $\Cln{n} = \Cliff{\{1,\dots,n\}}$. We also define 
\[
    (D \otimes \bK)(S) = \bigotimes_{s \in S} D \otimes \bK
\]
and note the natural isomorphism
\[
    (D \otimes \bK)(S \sqcup T) = \bigotimes_{s \in S \sqcup T} D \otimes \bK \cong \left(\bigotimes_{s \in S} D \otimes \bK\right) \otimes \left(\bigotimes_{s \in T} D \otimes \bK\right)
\]
given by reordering the tensor factors. Equipped with this natural isomorphism $S \mapsto (D \otimes \bK)(S)$ is a trivially graded stable coefficient system, where the group acts on $D$ and $\bK$ as described above.

In order to describe Bott periodicity in the spectral picture we define $\hat{\eta}_1 \in \hom_{\text{gr}}(\grS, C_0(\R) \otimes \Cln{1})$ to be the Bott element as in \cite[Def.~1.26]{book:HigsonGuentner}. Taking the adjoint map we can view this element as a base-point preserving continuous map 
\[
    \tilde{\eta}_1 \colon S^1 \to \hom_{\text{gr}}(\grS, \Cln{1})
\]
and extend it to 
\[
    \eta_1 \colon S^1 \to \hom_{\text{gr}}(\grS, \Cln{1} \otimes D \otimes \bK) \quad , \quad z \mapsto \tilde{\eta}_1(z) \otimes 1_D \otimes e\ .
\]
Note that $1_D \otimes e$ is fixed by the group action on $D \otimes \bK$. Now we define the spectrum $KU^D$ as follows:

\begin{definition} \label{def:KUD}
Let $G$ be a finite group, $V$ be a unitary $G$-re\-pre\-sen\-tation and let $D$ be the associated UHF $G$-$C^*$-algebra as defined in \eqref{eqn:D}. Let $(KU^D_n)_{n \in \N_0}$ denote the sequence of $G$-spaces 
\[ 
    KU^D_{n}=\hom_{\text{gr}}(\grS,\, (\C\ell \otimes D \otimes \bK)(\{1,\dots,n\})\,) \ ,
\]
where $S \mapsto (\C\ell \otimes D \otimes \bK)(S)$ is the coefficient system defined above. Notice that there is a natural $(\Sigma_n \times G)$-action on $(\C\ell \otimes D \otimes \bK)(\{1,\dots,n\})$ by functoriality. The spaces $KU^D_n$ can be equipped with a multiplication as follows
\[
    \mu_{m,n} \colon KU^D_m \wedge KU^D_n \to KU^D_{m+n} \quad, \quad \varphi \wedge \psi \mapsto (\varphi \otimes \psi) \circ \Delta\ .
\]
There are corresponding unit maps $\eta_n \colon S^n \to KU^D_n$ defined inductively by 
\(
    \eta_n = (\eta_{n-1} \otimes \eta_1) \circ \Delta
\)
with $\eta_1$ as above.
\end{definition}

\begin{remark}
    Non-equivariant versions of the spectrum $KU^{\cD}$ have been defined in \cite{paper:DP-Units} for all strongly self-absorbing $C^*$-algebras $\cD$. In this case the spectrum $KU^{\cD}$ represents topological $K$-theory with coefficients in $K_0(\cD)$, i.e. a localisation of $K$-theory. For instance, 
    \[KU^{\C} \simeq KU^{\mathcal{Z}} \simeq KU^{\mathcal{O}_{\infty}} \simeq KU \qquad \text{ and } \qquad KU^{M_n^{\infty}} \simeq KU\!\left[\tfrac{1}{n}\right] \]
    where $KU$ denotes the usual complex topological $K$-theory spectrum.
\end{remark}

 Given two graded $G$-$C^*$-algebras $A$ and $B$ denote by $[A,B]_G$ the homotopy classes of grading preserving $*$-homomorphisms $A \to B$. Let $\hat{\C}^2$ be the graded Hilbert space with $\hat{\C}^2_+ = \C = \hat{\C}^2_-$ and let $\hat{H}_G = \hat{\C}^2 \otimes H_G = \hat{\C}^2 \otimes L^2(G) \otimes H_0$. Let $\widehat{\bK} = \bK(\hat{H}_G)$. This is also a graded $G$-$C^*$-algebra. The following lemma is an equivariant generalisation of the converse functional calculus in \cite{paper:Trout}. In fact, the equivariant case can be reduced to the non-equivariant one via the Green-Julg theorem. A proof along these lines (which works for the much more general setting of proper locally compact groupoids with Haar system) can be found for example in \cite[Prop.~3.1]{paper:FuncEquivKSpec}. 
\begin{lemma} \label{lem:htpy_sets}
    Let $G$ be a finite group and let $B$ be a graded $G$-$C^*$-algebra. Then the converse functional calculus explained in \cite[Thm.~4.7]{paper:Trout} provides an isomorphism of groups 
    \[
        [\grS, B \otimes \widehat{\bK}]^G \cong KK_G(\C,B)\ ,
    \]
    which is natural in $B$. 
\end{lemma}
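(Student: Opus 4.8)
The plan is to deduce this from the non-equivariant converse functional calculus of Trout together with the Green--Julg adjunction, exactly as the statement hints. I would proceed in three stages. First, I would recall the non-equivariant statement: for any graded $C^*$-algebra $A$, Trout's converse functional calculus in \cite[Thm.~4.7]{paper:Trout} gives a natural isomorphism $[\grS, A \otimes \widehat{\bK}] \cong KK(\C, A)$, where on the left we take homotopy classes of grading-preserving $*$-homomorphisms. Here $\widehat{\bK}$ is the graded compacts on $\hat{\C}^2 \otimes H_0$; the stabilisation by $\widehat{\bK}$ is what makes the set of homomorphisms into a group and turns it into the correct $KK$-group (this is the content of the ``asymptotic morphism $=$ honest $*$-homomorphism after stabilising'' philosophy).

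Second, I would apply this with $A = B \rtimes G$ (the crossed product, with its induced grading coming from the grading on $B$ and the trivial grading on $G$). The Green--Julg theorem gives a natural isomorphism $KK(\C, B \rtimes G) \cong KK_G(\C, B)$ for a finite (indeed compact) group $G$; since everything is $\Z/2\Z$-graded and $G$ acts by grading-preserving automorphisms, this passes to the graded setting without change. So it remains to identify $[\grS, (B\rtimes G) \otimes \widehat{\bK}]$ with the equivariant homotopy set $[\grS, B \otimes \widehat{\bK}]^G$. The key point is that $\grS$ is nuclear, so $(B \rtimes G) \otimes \widehat{\bK} \cong (B \otimes \widehat{\bK}) \rtimes G$, and a grading-preserving $*$-homomorphism $\grS \to (B\otimes\widehat{\bK})\rtimes G$ out of the trivially-$G$-algebra-in-the-relevant-sense $\grS$ is the same as a $G$-equivariant grading-preserving $*$-homomorphism $\grS \to M(\, (B\otimes \widehat{\bK})\rtimes G\,)$ landing appropriately — more cleanly, one uses that for $G$ finite the graded $G$-$C^*$-algebra $B \otimes \widehat{\bK}_G$ (with $\widehat{\bK}_G = \bK(\hat{\C}^2 \otimes \ell^2(G) \otimes H_0)$, which absorbs $\widehat{\bK}$ and the regular representation) satisfies $[\grS, B \otimes \widehat{\bK}_G]^G \cong [\grS, (B \rtimes G) \otimes \widehat{\bK}]$ by the standard identification of $G$-equivariant maps into a $G$-stable algebra with ordinary maps into the crossed product. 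Composing the three isomorphisms and checking that $\widehat{\bK}_G$-stabilisation and $\widehat{\bK}$-stabilisation give the same homotopy set (both absorb each other up to equivariant homotopy, since $\hat{H}_G = \hat{\C}^2 \otimes \ell^2(G) \otimes H_0$) yields $[\grS, B \otimes \widehat{\bK}]^G \cong KK_G(\C, B)$.

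Third, I would verify naturality in $B$: each of the three isomorphisms (Trout, Green--Julg, the crossed-product/equivariant-maps identification) is natural with respect to equivariant grading-preserving $*$-homomorphisms $B \to B'$, so the composite is too. The group structure on the left is induced by the coalgebra structure $\Delta \colon \grS \to \grS \otimes \grS$ together with the codiagonal on $B \otimes \widehat{\bK}$ coming from $\widehat{\bK}\otimes\widehat{\bK}\cong \widehat{\bK}$; one checks this matches the Kasparov addition on $KK_G(\C,B)$, which is again inherited from the non-equivariant case under Green--Julg.

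The main obstacle I expect is the bookkeeping around the stabilising Hilbert space in stage two: ensuring that the Hilbert space $\hat{H}_G = \hat{\C}^2 \otimes \ell^2(G) \otimes H_0$ used to define $\widehat{\bK}$ is large enough that $B \otimes \widehat{\bK}$, viewed as a $G$-algebra, is genuinely $G$-stable in the sense required to identify $G$-maps into it with maps into the crossed product — i.e. that $\ell^2(G)$ (which here is the sum of all irreducibles, hence contains the regular representation) does the job — and that this stabilisation is compatible with the (ungraded) $\widehat{\bK}$ appearing in Trout's theorem. Since the excerpt explicitly points to \cite[Prop.~3.1]{paper:FuncEquivKSpec} for a proof in the much greater generality of proper groupoids, in practice I would simply invoke that reference for the details and only spell out the reduction sketched above.
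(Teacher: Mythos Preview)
Your proposal is correct and follows exactly the approach the paper indicates: the paper does not give a proof but states that the equivariant case reduces to Trout's non-equivariant theorem via the Green--Julg theorem and cites \cite[Prop.~3.1]{paper:FuncEquivKSpec} for the details, which is precisely what you do. Your three-stage outline (Trout for $A = B \rtimes G$, Green--Julg, identification of equivariant maps with maps into the crossed product after $\ell^2(G)$-stabilisation) is the standard way to make that reduction precise, and your identification of the stabilisation bookkeeping as the only real obstacle is accurate and already covered by the reference you (and the paper) invoke.
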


We are now in the position to prove the main theorem of this section.

\begin{theorem} \label{KU}
The spaces $KU^D_{*}$ together with the maps $\mu_{*,*}$ and $\eta_{*}$ form a commutative symmetric $G$-ring spectrum (in the sense of \cite[Def.~2.2]{paper:Hausmann} for the trivial universe), with coefficients (for $H \subseteq G)$
\[ 
    \pi_{n}^{H}(KU^D) \cong K_{n}^{H}(D)\ .
\]
\end{theorem}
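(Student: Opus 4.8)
The plan is to verify the three separate assertions in turn: (1) that the $KU^D_n$ with the maps $\mu_{*,*}$ and $\eta_*$ satisfy the axioms of a commutative symmetric $G$-ring spectrum; (2) that the symmetric group actions and structure maps are suitably equivariant; and (3) the homotopy group computation $\pi_n^H(KU^D) \cong K_n^H(D)$. For (1), the associativity, unitality and commutativity of the multiplication $\mu_{m,n}(\varphi \wedge \psi) = (\varphi \otimes \psi) \circ \Delta$ all reduce to the coassociativity and cocommutativity of the comultiplication $\Delta \colon \grS \to \grS \otimes \grS$ together with the fact that $S \mapsto (\Cliff{\bullet} \otimes D \otimes \bK)(S)$ is a symmetric monoidal (coefficient system) functor, so the tensor factors can be permuted coherently; these are essentially formal checks, done non-equivariantly in \cite{paper:DP-Units}, and the same diagrams commute here. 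The $\Sigma_n$-equivariance of $KU^D_n$ comes for free from functoriality of the coefficient system under bijections of $\{1,\dots,n\}$, and compatibility of $\mu_{m,n}$ with the $\Sigma_m \times \Sigma_n \hookrightarrow \Sigma_{m+n}$ action is again the monoidality of $S \mapsto (\Cliff{\bullet}\otimes D \otimes \bK)(S)$. The unit maps $\eta_n$ are defined inductively from $\eta_1$ via $\Delta$, and the unit axiom $\mu_{m,n} \circ (\eta_m \wedge \eta_n) = \eta_{m+n}$ is then an immediate induction using coassociativity of $\Delta$.

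The one genuinely equivariant point in (1)–(2) is that $G$ acts on $(\Cliff{\bullet} \otimes D \otimes \bK)(\{1,\dots,n\})$ through its action on $D \otimes \bK$ only (the Clifford part carries the trivial action, and the Bott element $\tilde\eta_1$ lands in $\hom_{\mathrm{gr}}(\grS, \Cln 1)$ which is $G$-fixed), so $\eta_1$ is an equivariant map and hence all $\eta_n$ are; and $\mu_{m,n}$ is $G$-equivariant because $G$ acts diagonally and $\Delta$ is $G$-fixed. This gives the structure of a commutative symmetric $G$-ring spectrum in the sense of \cite[Def.~2.2]{paper:Hausmann} for the trivial universe (note that we only claim the trivial-universe version, so no genuine-representation indexing is needed, which keeps this bookkeeping light).

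For (3), the computation of $\pi_n^H(KU^D)$, the strategy is to compute $\pi_n^H$ as a colimit of $[S^{n+k}, KU^D_k]^H$ along the structure maps. First I would identify, for fixed $k$, the homotopy classes $[S^{n+k}, KU^D_k]^H$. Unwinding the definition, $KU^D_k = \hom_{\mathrm{gr}}(\grS, (\Cliff{\bullet} \otimes D \otimes \bK)(\{1,\dots,k\}))$ and a based map $S^{n+k} \to KU^D_k$ is adjoint to a grading-preserving $*$-homomorphism out of $\grS$ into a Clifford-algebra-stabilised suspension of $D \otimes \bK$; after absorbing the $\Cln k$ factor (which shifts degree by $k$ via Clifford/Bott periodicity, $\Cln{k+1} \otimes \C\ell_{-k} \cong \widehat\bK$-stably) and using $S^{n+k}$-suspension coordinates as further copies of $\grS$ via $\Delta$, this becomes a homotopy class of the form $[\grS, S^{j}(D \otimes \bK) \otimes \widehat\bK]^H$ for an appropriate $j$. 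Then Lemma~\ref{lem:htpy_sets} identifies this with $KK_G(\C, S^j D) \cong KK_H(\C, S^j D)$ (Green–Julg / restriction), which by definition is $K_j^H(D)$ up to the degree shift; passing to the colimit over $k$ stabilises the shifts and yields $\pi_n^H(KU^D) \cong K_n^H(D)$, with Bott periodicity ensuring the colimit system is eventually constant (up to iso). The main obstacle I expect is \emph{the careful bookkeeping of gradings and degree shifts} in step (3): matching the Clifford-algebra degree coming from $\Cln k$, the suspension coordinates of $S^{n+k}$, and the internal grading of $\grS$, so that everything lines up with the stated index $n$ on the nose and so that the structure maps of the spectrum induce precisely the identity (or Bott) maps on the $KK_H(\C, S^j D)$ groups. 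This is exactly the kind of computation carried out non-equivariantly in \cite[Sec.~4.1]{paper:DP-Units} and \cite{paper:Trout}, and the equivariant enhancement is controlled entirely by Lemma~\ref{lem:htpy_sets}; the rest is naturality and a cofinality argument for the colimit.
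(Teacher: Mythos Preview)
Your proposal is correct and follows essentially the same route as the paper: reduce the ring-spectrum axioms to the non-equivariant argument of \cite[Thm.~4.2]{paper:DP-Units} together with the observation that $G$ acts only through $D\otimes\bK$ (hence commutes with the $\Sigma_n$-action and fixes $\eta_1$), and then identify $[S^{n+k},KU^D_k]^H$ via Lemma~\ref{lem:htpy_sets} and equivariant Bott periodicity in $KK$. The one point the paper makes more explicit than you do is that the target at level $k$ is $K_n^H\big((D\otimes\bK)^{\otimes k}\big)$ rather than $K_n^H(D\otimes\bK)$, and that under this identification the spectrum structure maps become the inclusions $(D\otimes\bK)^{\otimes k}\hookrightarrow(D\otimes\bK)^{\otimes m}$ given by tensoring with $1\otimes e$, which are $K$-theory isomorphisms; this is the precise content of what you call the ``cofinality argument'' and is where your degree-bookkeeping worry dissolves.
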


\begin{proof}
The proof that the sequence of spaces $(KU^D_n)_{n \in \N_0}$ forms a commutative symmetric $G$-ring spectrum is very similar to the non-equivariant case discussed in \cite[Thm.~4.2]{paper:DP-Units}. Therefore we omit the details and just point out that $KU^D_n$ comes equipped with a $G$-action, which is trivial everywhere except on $D \otimes \bK$. In particular, it commutes with the $\Sigma_n$-action.

To compute $\pi_n^H(KU^D)$ we need to determine
\(
    [S^{n + k}, KU^D_k]^H
\) for $k \in \N_0$. 
Observe that 
\[
    KU^D_k \cong \hom_{\text{gr}}\left(\grS, \Cln{k} \otimes (D \otimes \bK)^{\otimes k}\right) 
\]
and therefore 
\begin{equation} \label{eqn:htpy_groups}
    \left[S^{n +k}, KU^D_k\right]^H \cong \left[\grS, C_0(\R^n) \otimes C_0(\R^k, \Cln{k}) \otimes (D\otimes\bK)^{\otimes k}\right]^H\ .
\end{equation}
From \cite[Cor.~14.5.3]{book:Blackadar} we deduce the following isomorphisms of graded $G$-$C^*$-algebras:
\[
    \Cln{k} \otimes \widehat{\bK} \cong \Cln{k-1} \otimes \Cln{1} \otimes \widehat{\bK} \cong \Cln{k-1} \otimes \Cln{1} \otimes \bK \cong \Cln{k} \otimes \bK\ ,
\]
where $\bK$ on the right hand side denotes the trivially graded compact operators. By Lemma~\ref{lem:htpy_sets} we therefore can express the right hand side of \eqref{eqn:htpy_groups} as equivariant $K$-theory groups as follows:
\begin{align*}
    & KK_H(\C, C_0(\R^n) \otimes C_0(\R^k, \Cln{k}) \otimes (D\otimes\bK)^{\otimes k} ) \\
    \cong\ & KK_H(\C, C_0(\R^n) \otimes (D\otimes\bK)^{\otimes k} ) \cong K_n^H( (D\otimes \bK)^{\otimes k} )\ ,
\end{align*}
where the first isomorphism is a consequence of Bott periodicity in equivariant $KK$-theory. For $m \geq k$ the above identification fits into the following commutative diagram
\[
    \begin{tikzcd}
        \left[S^{n + k}, KU^D_k\right]^H \arrow[d] \arrow[r,"\cong"] & K_n^H( (D\otimes \bK)^{\otimes k} ) \arrow[d] \\
        \left[S^{n + m}, KU^D_m\right]^H \arrow[r,"\cong" below] & K_n^H( (D\otimes \bK)^{\otimes m} )         
    \end{tikzcd}
\]
where the right hand vertical arrow is induced by inclusions $(D \otimes \bK)^{\otimes k} \to (D \otimes \bK)^{\otimes m}$ that introduce tensor factors $1 \otimes e$. All of these maps are isomorphisms in $K$-theory. Hence, 
\[
    \pi_n^H(KU^D) = \colim_{k \to \infty} \left[S^{n + k}, KU^D_k\right]^H \cong K_n^H(D)\ . \qedhere
\]
\end{proof}

\begin{remark}
With only minor modifications one can adopt the above proof to see that $KU^D_n$ extends to a commutative symmetric $G$-ring spectrum for an arbitrary choice of $G$-universe in the sense of \cite[Def.~2.2]{paper:Hausmann}. We omit the details here, since we will only make use of $KU^D$ as a naive $G$-spectrum. 
\end{remark}

\subsection{Equivariant units of $KU^D$}
\label{equnits}

Given a (commutative) symmetric $G$-ring spectrum $R$ with unit maps $\eta_n$ and multiplication maps $\mu_{m,n}$, 
\[\textbf{n} \mapsto (G/H \mapsto (\Omega^{n}R_{n})^H)\]
defines a (commutative) $\cI$-$O_G$-monoid $\Omega^{\bullet} R$. 
A morphism $\alpha: \textbf{m} \rightarrow \textbf{n}$ in~$\cI$ induces a unique permutation $\bar{\alpha}: \textbf{n} = \textbf{l} \sqcup \textbf{m} \rightarrow \textbf{n}$, which acts like $\alpha$ on the last $m$ entries and is monotonic on the first $l$ entries. This allows to define a map $(\Omega^{m}R_{m})^H \rightarrow (\Omega^{n}R_{n})^H$ by sending $f \in (\Omega^{m}R_{m})^H$ to the composition
\[S^{n} \xrightarrow{\bar{\alpha}^{-1}} S^{n}=S^{l} \wedge S^{m} \xrightarrow{\eta_{l} \wedge f} R_{l} \wedge R_{m} \xrightarrow{\mu_{l,m}} R_{n} \xrightarrow{\bar{\alpha}} R_{n}\ ,\]
thus yielding functoriality of $\Omega^{\bullet} R$ with respect to injective maps.

Non-equivariantly, the group of units of $R$ is classically defined by the following pullback (see \cite[Def.~11.3]{paper:Lind}):
\begin{equation} 
\label{eqn:GL1_diag}
\begin{tikzcd}
	{\text{GL}_1R} & {\Omega^{\infty}R} \\
	{\text{GL}_1(\pi_0(R))} & {\pi_0(R)}
	\arrow[hook, from=1-1, to=1-2]
	\arrow[from=1-1, to=2-1]
	\arrow[from=1-2, to=2-2]
	\arrow[hook, from=2-1, to=2-2]
\end{tikzcd}
\end{equation}

In the equivariant setting we want this condition to be realised for all fixed points at the same time.
Hence we define the $\cI$-$O_G$-\textit{space of units} of~$R$ (which we denote by GL$_1^{\bullet}R$ again) by the same formal pullback \eqref{eqn:GL1_diag} as above, but in the category of $\cI$-$O_G$-spaces. 
Unraveling the definition shows that it encodes the information about invertible elements for all fixed points, namely $(\text{GL}_1^{\bullet}R(\textbf{n}))(G/H)$ is made up of those elements of $(\Omega^nR_n)^H$ which are sent to units on $\pi_0$, where $\pi_0$ is a $O_G$-set, defined by $\pi_0(X)(G/H)=\pi_0^H(X)$. For semistable commutative symmetric $G$-ring spectra this has the correct homotopy type (see \cite[Rem.~11.4]{paper:Lind}), so in particular for $KU^D$. The machinery illustrated in Section \ref{prel_1} allows us to define the special $\Gamma$-$G$-space of units $H_{\cI}\text{GL}_1^{\bullet}R$ and its associated spectrum.

\begin{definition} \label{def:spec_units}
The $G$-\textit{spectrum of units} of a commutative symmetric $G$-ring spectrum~$R$ is
\[gl_{1}R=EH_{\cI}\text{GL}_1^{\bullet}R\ .\]
\end{definition}

\begin{remark}
    A similar approach to units of equivariant ring spectra has been worked out in \cite[Sec.~9]{paper:Santhanam}, with the difference that the author uses special $\Gamma$-$G$-spaces (actually a slight variation, namely $\Gamma_G$-spaces) directly.
    Let us show how for a special $\Gamma$-$O_G$-space arising from a stable commutative $\cI$-$O_G$-monoid $\Omega^{\bullet}R$ for a symmetric commutative $G$-ring spectrum $R$ the two machines define weakly equivalent objects. We will denote the $\Gamma$-$O_G$-space of units defined in \cite{paper:Santhanam} by $UR$. It is defined by the  pullback 
\[\begin{tikzcd}
	{UR} & {H_{\mathcal{I}}\Omega^{\bullet}R} \\
	{\text{GL}_1(\pi_0(H_{\mathcal{I}}\Omega^{\bullet}R))} & {\pi_0(H_{\mathcal{I}}\Omega^{\bullet}R)}
	\arrow[hook, from=1-1, to=1-2]
	\arrow[from=1-1, to=2-1]
	\arrow[from=1-2, to=2-2]
	\arrow[hook, from=2-1, to=2-2]
\end{tikzcd}\]
where $H_{\mathcal{I}}\Omega^{\bullet}R$ is the special $\Gamma$-$O_G$-space associated to $\Omega^{\bullet}R$. Note that $H_{\cI}GL_1^\bullet R$ also fits into a homotopy pullback diagram
\[
    \begin{tikzcd}
        H_{\cI}GL_1^\bullet R \ar[r] \ar[d] & H_{\cI} \ar[d] \Omega^\bullet R \\
        \pi_0(H_{\cI}GL_1^\bullet R) \ar[r] & \pi_0(H_{\cI}\Omega^\bullet R)
    \end{tikzcd}
\]
The map of $\Gamma$-$G$-spaces $H_\cI GL_1^\bullet R \to H_\cI \Omega^{\bullet} R$ is an isomorphism on all higher homotopy groups. Hence, it suffices to show that
\[
\pi_0(UR(\textbf{1}^+)) \cong \pi_0((H_{\cI}GL_1^\bullet R)(\mathbf{1}^+)) \ .
\] 
By stability we have $(H_{\cI}\text{GL}_1^{\bullet}R)(\textbf{1}^+) = \text{GL}_1^{\bullet}R_{h\cI} \simeq \text{GL}_1^{\bullet}R(\textbf{1})$ and therefore
\[\pi_0((H_{\cI}\text{GL}_1^{\bullet}R)(\textbf{1}^+)) \cong 
\pi_0(\text{GL}_1^{\bullet}R(\textbf{1})) \cong \text{GL}_1(\pi_0(\Omega^{\infty} R))\ .
\]
On the other hand, since $(H_{\cI}\Omega^{\bullet}R)(\textbf{1}^+)=\Omega^{\bullet}R_{h\cI} \simeq \Omega^{\infty}R$ we have that $UR(\textbf{1}^+)$ is defined by
\[\begin{tikzcd}
	{UR(\textbf{1}^+)} & {\Omega^{\bullet}R_{h\cI}} \\
	{\text{GL}_1(\pi_0(\Omega^{\infty}R))} & {\pi_0(\Omega^{\infty}R)}
	\arrow[hook, from=1-1, to=1-2]
	\arrow[from=1-1, to=2-1]
	\arrow[from=1-2, to=2-2]
	\arrow[hook, from=2-1, to=2-2]
\end{tikzcd}\]
Hence by definition 
\[
\pi_0(UR(\textbf{1}^+)) \cong \text{GL}_1(\pi_0(\Omega^{\infty}R)) \cong \pi_0(H_{\cI}\text{GL}_1^{\bullet}R(\textbf{1}^+))\ .
\]
\end{remark} 

We will focus on the equivariant units of the $G$-ring spectrum $R=KU^D$ constructed in Section \ref{spectrum}. The following result is basically a consequence of Bott periodicity.

\begin{lemma}
The $\cI$-$O_G$-space $\text{GL}_1^{\bullet}KU^D$ is stable.
\end{lemma}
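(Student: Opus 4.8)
The plan is to show that every morphism $\alpha \colon \mathbf{m} \to \mathbf{n}$ in $\cI$ with $m > 0$ induces, for every subgroup $H \subseteq G$, a weak equivalence $\text{GL}_1^{\bullet}KU^D(\mathbf{m})(G/H) \to \text{GL}_1^{\bullet}KU^D(\mathbf{n})(G/H)$. Every injective map in $\cI$ factors as a standard inclusion $\mathbf{m} \hookrightarrow \mathbf{n}$ followed by a permutation of $\mathbf{n}$; permutations act on $\text{GL}_1^{\bullet}KU^D$ by isomorphisms, being isomorphisms of $\cI$, and the standard inclusion $\mathbf{m} \hookrightarrow \mathbf{n}$ is a composite of standard inclusions $\mathbf{k} \hookrightarrow \mathbf{k+1}$ with $k \ge m \ge 1$. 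So it suffices to treat $\alpha \colon \mathbf{m} \hookrightarrow \mathbf{m+1}$ with $m \ge 1$. The argument has two parts: first we show that the underlying commutative $\cI$-$O_G$-monoid $\Omega^{\bullet}KU^D$ is stable, and then we transfer this across the defining pullback \eqref{eqn:GL1_diag}.

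\textbf{Step 1: $\Omega^{\bullet}KU^D$ is stable.} By the formula for the $\cI$-action on $\Omega^{\bullet}R$, the map $\Omega^{\bullet}KU^D(\mathbf{m})(G/H) \to \Omega^{\bullet}KU^D(\mathbf{m+1})(G/H)$ induced by the standard inclusion equals, up to the isomorphisms $\bar{\alpha}^{\pm 1}$, the $m$-fold loop map $\Omega^m(\tilde{\mu}_m^H)$, where $\tilde{\mu}_m \colon KU^D_m \to \Omega KU^D_{m+1}$ is the adjoint structure map of the ring spectrum; it is therefore a weak equivalence as soon as $\tilde{\mu}_m^H$ is, i.e.\ as soon as $KU^D$ is a positive $\Omega$-$G$-spectrum. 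This is precisely what the computation in the proof of Theorem~\ref{KU} yields: for $k \ge 1$ one has $KU^D_k \cong \hom_{\text{gr}}(\grS, \Cln{k} \otimes (D\otimes\bK)^{\otimes k})$, this graded $G$-$C^*$-algebra absorbs $\widehat{\bK}$, and a currying argument together with Lemma~\ref{lem:htpy_sets} and Bott periodicity in equivariant $KK$-theory gives natural isomorphisms
\[
    \pi_j\big((KU^D_k)^H\big) \cong KK_H\big(\C, C_0(\R^{j-k}) \otimes C_0(\R^k, \Cln{k}) \otimes (D\otimes\bK)^{\otimes k}\big) \cong K_{j-k}^H(D) \quad (j \ge 1)
\]
under which the structure maps correspond to the stabilisation isomorphisms considered there. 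Hence $\tilde{\mu}_m^H$ is a weak equivalence for all $m \ge 1$, so $\Omega^{\bullet}KU^D$ is stable.

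\textbf{Step 2: transfer across the pullback.} By construction $\text{GL}_1^{\bullet}KU^D(\mathbf{n})(G/H)$ is the strict pullback of $(\Omega^n KU^D_n)^H \to \pi_0\big((\Omega^n KU^D_n)^H\big) \hookleftarrow \text{GL}_1\big(\pi_0((\Omega^n KU^D_n)^H)\big)$, and since the right-hand arrow is the inclusion of a subset this pullback is exactly the union of the path components of $(\Omega^n KU^D_n)^H$ lying over units. Now take $m \ge 1$. The map $\pi_0\big((\Omega^m KU^D_m)^H\big) \to \pi_0\big((\Omega^{m+1} KU^D_{m+1})^H\big)$ is $\pi_0$ of the weak equivalence from Step~1, hence a bijection; being compatible with the multiplications $\mu_{*,*}$ it is a ring isomorphism and therefore restricts to a bijection between the subsets of units. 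Consequently the weak equivalence $(\Omega^m KU^D_m)^H \to (\Omega^{m+1} KU^D_{m+1})^H$ takes the union of components over units onto the union of components over units and induces a bijection between these sets of path components. A weak equivalence restricts to a weak equivalence on any union of path components matched up by the induced bijection on $\pi_0$, so the restriction $\text{GL}_1^{\bullet}KU^D(\mathbf{m})(G/H) \to \text{GL}_1^{\bullet}KU^D(\mathbf{m+1})(G/H)$ is a weak equivalence. This proves that $\text{GL}_1^{\bullet}KU^D$ is stable.

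The main obstacle is Step~1: one has to upgrade Lemma~\ref{lem:htpy_sets} from a statement about the set $[\grS,-]^G$ to an identification of all homotopy groups of the spaces $KU^D_k$, and check that the structure maps of $KU^D$ genuinely realise the Bott/stabilisation isomorphisms — but all of this is already contained in (the proof of) Theorem~\ref{KU}, so that the lemma is, as advertised, essentially a consequence of Bott periodicity. Once $\Omega^{\bullet}KU^D$ is known to be stable, Step~2 is formal.
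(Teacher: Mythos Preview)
Your proof is correct and follows essentially the same route as the paper: both arguments rest on the fact that the structure maps of $\Omega^{\bullet}KU^D$ are weak equivalences on fixed points for $m>0$ (Bott periodicity, via Theorem~\ref{KU}), and then pass to $\text{GL}_1^{\bullet}KU^D$ using that it is a union of path components of $\Omega^{\bullet}KU^D$. The paper organises this as a $\pi_0$/$\pi_{>0}$ split rather than your Step~1/Step~2 split, but the content is the same.
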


\begin{proof}
We need to check that all morphisms $\mathbf{m} \to \mathbf{n}$ in~$\cI$ with $m > 0$ are mapped to weak equivalences for all cosets $G/H$. Let $H \subseteq G$. Note that for $\pi_0$ we have
\[ \pi_{0} ((\text{GL}_1^{\bullet}KU^D)(\textbf{m}))(G/H) \cong \pi_{0}((\text{GL}_1^{\bullet}KU^D)(\textbf{n}))(G/H) \cong \text{GL}_1(K_0^H(D))\]
for all \textbf{m}, \textbf{n}, whereas for $k > 0$ we have the following commutative diagram
\[\begin{tikzcd}
	{\pi_k((\text{GL}_1^{\bullet}KU^D))(\textbf{m})(G/H)} & {\pi_k((\text{GL}_1^{\bullet}KU^D)(\textbf{n}))(G/H)} \\
	{\pi_k(\Omega^{m}KU^D_m)} & {\pi_k(\Omega^{n}KU^D_n)}
	\arrow["\cong", from=1-2, to=2-2]
	\arrow["\cong"', from=1-1, to=2-1]
	\arrow[from=1-1, to=1-2]
	\arrow[from=2-1, to=2-2]
\end{tikzcd}\]
where the bottom horizontal arrow is an isomorphism when $m > 0$.
\end{proof}

\section{Automorphisms of $D \otimes \bK$}

\subsection{The $\cI$-$G$-space $\cG_D$}
\label{G_D}

The $C^*$-algebra $D=\Endo{V}^{\infty}$ is unital, separable and strongly self-absorbing. It has been shown in \cite{paper:DP-Dixmier} that the automorphism group of the stabilisation of such algebras carries an infinite loop space structure.
In order to extend this to the equivariant case we consider $D \otimes \bK$ as a $G$-$C^*$-algebra for a finite group $G$ (which we will later choose to be $\Z/p\Z$) with the action on $D$ defined in \eqref{eqn:D}. 

Let $\cG_D(\textbf{n})=\Aut{(D \otimes \bK)^{\otimes n}}$ (with $(D \otimes \bK)^{\otimes 0} :=\C$, so that $\cG_D(\textbf{0})$ is the trivial group).
The functor $\cG_D$ is defined on a morphism $\alpha \colon \textbf{m} \rightarrow \textbf{n}$ in~$\cI$ as $\cG_D(\alpha)(g)=\bar{\alpha} \circ (\id{(D \otimes \bK)^{\otimes (n-m)}} \otimes g) \circ \bar{\alpha}^{-1}$, with $g \in \cG_D(\textbf{m})$ and $\bar{\alpha}$ as explained at the beginning of Section \ref{equnits}. The tensor product equips $\cG_D(\textbf{n})$ with the structure of a commutative $\cI$-$G$-monoid, via
\[ \mu_{m,n}: \cG_D(\textbf{m}) \times \cG_D(\textbf{n}) \rightarrow \cG_D(\textbf{m} \sqcup \textbf{n})\ , \qquad (g,h) \mapsto g \otimes h\ .\] 
Note that in this case $\cG_D(\textbf{n})$ has both a group multiplication and a monoid structure and they satisfy the Eckmann-Hilton condition from \cite[Def.~3.1]{paper:DP-Units}. It will prove useful to have a compatibility condition between the two.

\begin{definition}
    Let $X$ be a $\cI$-$G$-monoid such that $X(\textbf{n})$ is also a group for each \textbf{n}. Let $\iota_m:\textbf{0} \rightarrow \textbf{m}$ be the unique such morphism in $\cI$. We say that $X$ has \textit{compatible inverses} if there is a path from $(\iota_m \sqcup \id{\textbf{m}})_*(g) \in X(\textbf{m} \sqcup \textbf{m})$ to $(\id{\textbf{m}} \sqcup \iota_m)_*(g)$ for all $m$ and $g \in X(\textbf{m})$.
\end{definition}

If $X$ has compatible inverses, then for each $g \in X(\textbf{m})$ there is a path connecting $\mu_{m,m}(g,g^{-1}) \in X(\textbf{m} \sqcup \textbf{m})$ and $1_{m \sqcup m} \in X(\textbf{m} \sqcup \textbf{m})$ \cite[Lem. 3.3]{paper:DP-Units}. 

From now on we restrict to $G = \Z/p\Z$ for a prime $p \in \N$.
\begin{lemma}
\label{GD_stable}
    $\cG_D$ is a stable commutative $\cI$-$\Z/p\Z$-monoid with compatible inverses.
\end{lemma}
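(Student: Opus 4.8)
The plan is to verify the three properties—stability, the commutative $\cI$-$G$-monoid structure, and compatible inverses—essentially separately, since the monoid structure has already been exhibited above the statement. For \textbf{stability}, I must check that every morphism $\mathbf{m}\to\mathbf{n}$ in $\cI$ with $m>0$ is sent to a weak equivalence on all fixed points $G/H$. The key input is the non-equivariant result from \cite{paper:EP-Circle} (or \cite{paper:DP-Dixmier}) that the canonical inclusion $\Aut{D\otimes\bK}\to\Aut{(D\otimes\bK)^{\otimes 2}}$, $g\mapsto g\otimes\id{}$, is a homotopy equivalence, combined with the fact that $D\otimes\bK$ is strongly self-absorbing so that $(D\otimes\bK)^{\otimes n}\cong D\otimes\bK$ equivariantly. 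Since any morphism $\mathbf{m}\to\mathbf{n}$ with $m>0$ factors (up to the permutations $\bar\alpha$, which act by conjugation and hence are homeomorphisms) through such stabilisation maps, it suffices to treat the single map $\Aut{D\otimes\bK}\to\Aut{(D\otimes\bK)\otimes(D\otimes\bK)}$ and to observe that the relevant statements from \cite{paper:EP-Circle} are proved equivariantly for $\Z/p\Z$ (this is the point where the restriction to prime cyclic groups is invoked). Taking $H$-fixed points, $\Aut{D\otimes\bK}^H$ is the automorphism group of the fixed $G$-$C^*$-algebra structure, and the same absorption argument applies verbatim after restricting the action along $H\subseteq G$.

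For the \textbf{commutative $\cI$-$G$-monoid structure}, most of the work is already done: $\mu_{m,n}(g,h)=g\otimes h$ is clearly associative and unital, and commutativity holds because the flip automorphism of $(D\otimes\bK)^{\otimes m}\otimes(D\otimes\bK)^{\otimes n}$ realising the symmetry $\mathbf{m}\sqcup\mathbf{n}\to\mathbf{n}\sqcup\mathbf{m}$ intertwines $g\otimes h$ with $h\otimes g$. The only subtlety is equivariance of $\mu$: one checks that the tensor product of two $G$-equivariant automorphisms is $G$-equivariant for the diagonal action, so $\mu$ is a map of $G$-spaces, and naturality in $\cI$ is immediate from the definition of $\cG_D$ on morphisms. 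I would state this briefly and refer to \cite[Def.~2.2]{paper:DP-Units} for the precise monoid axioms, noting they are verified exactly as in the non-equivariant case.

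The main obstacle, and the part deserving genuine care, is \textbf{compatible inverses}. Here I need, for each $m$ and each $g\in\cG_D(\mathbf{m})=\Aut{(D\otimes\bK)^{\otimes m}}$, a path in $\Aut{(D\otimes\bK)^{\otimes 2m}}$ from $\id{(D\otimes\bK)^{\otimes m}}\otimes g$ to $g\otimes\id{(D\otimes\bK)^{\otimes m}}$. The strategy is the standard one used for strongly self-absorbing algebras: the two automorphisms differ by conjugation by the flip unitary on $(D\otimes\bK)^{\otimes m}\otimes(D\otimes\bK)^{\otimes m}$, and since $D\otimes\bK$ is stable and strongly self-absorbing one produces a norm-continuous path of unitaries (or a path in $\Aut{}$ directly) connecting the flip to the identity—concretely, using that the flip on $\cD\otimes\cD$ is approximately inner and can be connected to $\id{}$ through a path of automorphisms, an argument appearing in \cite{paper:DP-Dixmier} and \cite{paper:EP-Circle}. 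The equivariant refinement requires this path to consist of $G$-equivariant automorphisms; this is where one again leans on the analysis of the $\Z/p\Z$-action on $D=\Endo{V}^{\otimes\infty}$ from \cite{paper:EP-Circle}, which provides the equivariant version of approximate innerness of the flip. I would isolate this as the crux: once an equivariant path from the flip to the identity is in hand, conjugation yields the required path between $\id{}\otimes g$ and $g\otimes\id{}$, and Lemma~3.3 of \cite{paper:DP-Units} then gives the nullhomotopy of $\mu_{m,m}(g,g^{-1})$ for free. Throughout, the prime-order hypothesis on $G$ enters precisely because the cited topological results on $\Aut{D\otimes\bK}$ in \cite{paper:EP-Circle} are available (for $S^1$, and hence for its finite cyclic subgroups) in that generality.
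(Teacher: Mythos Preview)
Your proposal is correct and follows the same route as the paper, which simply cites \cite[Thm.~4.5]{paper:DP-Units} for the trivial subgroup and \cite[Lem.~4.2]{paper:EP-Circle} (the $\bT$-equivariant statement) for $H=\Z/p\Z$; you have accurately unpacked what those references supply, namely the equivariant half-flip/path of embeddings that simultaneously yields stability and compatible inverses. One minor over-statement: the path witnessing compatible inverses need not consist of $G$-equivariant automorphisms for the definition as written (a single $G$-equivariant path from the identity to the flip suffices to conjugate \emph{any} $g$), and the prime hypothesis on $p$ is not actually used at this stage---the paper only invokes it later, in Lemma~\ref{coefficients}.
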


\begin{proof}
    See \cite[Thm. 4.5]{paper:DP-Units} for $H=\{e\}$. The proof for $H = \Z/p\Z$ is an easy adaptation of \cite[Lem. 4.2]{paper:EP-Circle} where this is proven for $H = \bT$. 
\end{proof}

Recall we can always regard an $\cI$-$G$-monoid as an $\cI$-$O_G$-monoid by taking fixed points, since this preserves monoidality. By a slight abuse of notation we will refer to the latter again as $\cG_D$ in Section \ref{final_2}, where we will compare it with the $\cI$-$O_G$-space of units.

\subsection{$\Z/p\Z$-equivariant homotopy type of $\Aut{D \otimes \bK}$}
\label{Aut}

Let $G = \Z/p\Z$. Since the $G$-$C^*$-algebra $D$ defined in \eqref{eqn:D} is an infinite tensor product of a single representation, one can compute its equivariant $K$-theory using continuity. Note that the map $T \mapsto T \otimes 1_D$ induces multiplication by $V$ on $K$-theory. Therefore,
\[
    K_0^{G}(\C) \cong K_0^G(\Endo{V}^{\otimes n}) \cong \Z[t]/(t^p - 1) =: R \quad \text{and} \quad K_0^{G}(D) \cong R[p_V^{-1}]\ ,
\]
where $p_V \in R$ is the polynomial corresponding to the $G$-representation $V$. The group $K_0^G(D)$ is in fact a ring. But it also has the structure of an ordered group. The positive cone of $K_0^G(D)$ corresponds under this isomorphism to $R[p_V^{-1}]_+$, which is defined for $q \in R$ and $k \in \N_0$ by
\[
    \frac{q}{p_V^k} \in R[p_V^{-1}]_+ \  \Leftrightarrow \  q\cdot p_V^l \text{ has non-negative coefficients for some } l \in \N_0\ .
\]
Taking this order structure into account we need to consider the subgroup of positive units: Let $R_+ \subset R$ be the semiring of positive elements contained in a commutative unital ring $R$ with an ordered group structure and define
\[
    GL_1(R)_+= \{ x \in GL_1(R) \cap R_+ \ | \ x^{-1} \in R_+ \}\ ,
\]
i.e.\ those elements of $R_+$ which are invertible as elements of $R$ and whose inverse is in $R_+$ (see Remark~\ref{rem:glitch} for the reason why this definition is different from the one in \cite[p.~17]{paper:EP-Circle}). 

As we have already seen above, some of the results from \cite{paper:EP-Circle} are easily adapted to the subgroup $\Z/p\Z \subseteq \bT$. For example, determining the equivariant homotopy type of $\Aut{D \otimes \bK}$ can be achieved using the same approach as in \cite{paper:EP-Circle}. 

\begin{lemma} \label{lem:contr_htpies}
Let $e \in \bK$ be a projection of rank one that is fixed by the $\Z/p\Z$-action. There are two continuous paths 
\begin{align*}
    \gamma \colon [0,1] &\to \hom_{\Z/p\Z}(D, D \otimes D) \ , \\
    \beta \colon [0,1] &\to \hom_{\Z/p\Z}(D \otimes \bK, (D \otimes \bK)^{\otimes 2})
\end{align*}
(where the equivariant homomorphisms are equipped with the point-norm topology) with the following properties:
\begin{enumerate}[i)]
    \item $\gamma(0)(d) = d \otimes 1_D$, $\gamma(1)(d) = 1_D \otimes d$,
        $\beta(0)(a) = a \otimes (1_D \otimes e)$ and $\beta(1)(a) = (1_D \otimes e) \otimes a$,
    \item $\gamma$ restricts to $(0,1) \to \mathrm{Iso}_{\Z/p\Z}(D, D \otimes D)$ and $\beta$ restricts to $(0,1) \to \mathrm{Iso}_{\Z/p\Z}(D \otimes \bK, (D \otimes \bK)^{\otimes 2})$,
    \item $\beta(t)(1_D \otimes e) = (1_D \otimes e)^{\otimes 2}$ for all $t \in [0,1]$.
\end{enumerate}
\end{lemma}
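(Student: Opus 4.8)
The plan is to construct $\gamma$ first and then manufacture $\beta$ out of it by tensoring with a matching path on the $\bK$-factor. Throughout, the point is that $D = \Endo{V}^{\otimes\infty}$ is built from a single representation $V$ occurring infinitely often, so that the $\Z/p\Z$-equivariant flip unitary $u_V \in U(V\otimes V)^{\Z/p\Z}$, $u_V(\xi\otimes\eta) = \eta\otimes\xi$, is available on each tensor leg. This is exactly the ingredient that lets the $\bT$-equivariant arguments of \cite{paper:EP-Circle} run with $\bT$ replaced by $\Z/p\Z \subseteq \bT$, and I would only indicate the changes.

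First I would establish the $\Z/p\Z$-equivariant approximate uniqueness of unital embeddings $D \to D\otimes D$: there is a $\Z/p\Z$-equivariant isomorphism $\psi \colon D \to D\otimes D$ that is $\Z/p\Z$-asymptotically unitarily equivalent to the left embedding $\iota_L \colon d \mapsto d\otimes 1_D$ via a continuous path $(u_s)_{s\ge 0}$ of unitaries in $U(\widetilde{D\otimes D})^{\Z/p\Z}$ with $u_0 = 1$ and $\|u_s\,\iota_L(d)\,u_s^{*} - \psi(d)\| \to 0$ for all $d$. One obtains the implementing unitaries, as in \cite{paper:EP-Circle,paper:DP-Dixmier}, by conjugating with products of $u_V$ placed so as to flip the first $n$ tensor legs of the two copies of $D$; these are $\Z/p\Z$-invariant, and a continuous interpolation of them has $\Ad$ converging point-norm to the flip $\sigma\colon D\otimes D\to D\otimes D$, which implements $\iota_R = \sigma\circ\iota_L$. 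Reversing these equivalences, and noting that the flip $\sigma$ is itself $\Z/p\Z$-equivariant so that $w_s := \sigma(u_s) \in U(\widetilde{D\otimes D})^{\Z/p\Z}$, one gets point-norm convergences $\Ad(u_s^{*})\circ\psi \to \iota_L$ and $\Ad(w_s^{*})\circ\sigma\circ\psi \to \iota_R$. I would also record that $\mathrm{Iso}_{\Z/p\Z}(D, D\otimes D)$ is path-connected: it is a torsor over $\Aut_{\Z/p\Z}(D)$, which is path-connected in the point-norm topology because every equivariant automorphism of the UHF $\Z/p\Z$-algebra $D$ is approximately inner by $\Z/p\Z$-invariant unitaries — again by the method of \cite{paper:EP-Circle}. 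With this in hand the path $\gamma$ is assembled by bookkeeping: pick reparametrisations $g\colon(0,\tfrac13]\to[0,\infty)$ and $h\colon[\tfrac23,1)\to[0,\infty)$ blowing up at $0$ and $1$, set $\gamma(0) = \iota_L$, $\gamma(t) = \Ad(u_{g(t)}^{*})\circ\psi$ on $(0,\tfrac13]$, $\gamma(1) = \iota_R$, $\gamma(t) = \Ad(w_{h(t)}^{*})\circ\sigma\circ\psi$ on $[\tfrac23,1)$, and join $\gamma(\tfrac13)$ to $\gamma(\tfrac23)$ by a path inside $\mathrm{Iso}_{\Z/p\Z}(D, D\otimes D)$. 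Then $\gamma$ is point-norm continuous, $\gamma(t)$ is a $\Z/p\Z$-equivariant \emph{isomorphism} for every $t\in(0,1)$ (a conjugate of $\psi$ or of $\sigma\circ\psi$, resp.\ a point of $\mathrm{Iso}$), while it degenerates to the non-surjective embeddings $\iota_L,\iota_R$ at $t=0,1$; this gives (i) and (ii), and $\gamma(t)$ is unital for all $t$.

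For $\beta$ I would run the same machine once more for the stable $\Z/p\Z$-$C^*$-algebra $\bK = \bK(H_G)$: choosing a $\Z/p\Z$-equivariant unitary $H_G \otimes H_G \cong H_G$ carrying $e\otimes e$ to $e$ (possible since both are rank-one projections onto unit vectors of the infinite-dimensional trivial isotypic subspace) and $\Z/p\Z$-invariant multiplier unitaries that \emph{fix} the relevant copies of $e$, one gets a point-norm continuous path $\delta\colon[0,1]\to\hom_{\Z/p\Z}(\bK, \bK\otimes\bK)$ with $\delta(0)(k) = k\otimes e$, $\delta(1)(k) = e\otimes k$, $\delta(t)(e) = e\otimes e$ for all $t$, and $\delta(t)$ an isomorphism for $t\in(0,1)$. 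Letting $\tau$ denote the $\Z/p\Z$-equivariant leg-swap $(D\otimes D)\otimes(\bK\otimes\bK) \xrightarrow{\cong} (D\otimes\bK)\otimes(D\otimes\bK)$, I set $\beta(t) = \tau\circ(\gamma(t)\otimes\delta(t))$. This is point-norm continuous, $\beta(t)$ is an isomorphism for $t\in(0,1)$, $\beta(0)(a) = a\otimes(1_D\otimes e)$ and $\beta(1)(a) = (1_D\otimes e)\otimes a$, and, using that $\gamma(t)$ is unital,
\[
    \beta(t)(1_D\otimes e) = \tau\bigl((1_D\otimes 1_D)\otimes(e\otimes e)\bigr) = (1_D\otimes e)\otimes(1_D\otimes e) = (1_D\otimes e)^{\otimes 2}\ ,
\]
which is (iii).

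The main obstacle is the first step: transporting the asymptotic-unitary-equivalence statements for unital embeddings of $D$, and the point-norm path-connectedness of $\Aut_{\Z/p\Z}(D)$, to the equivariant setting. This is precisely where one uses that $D = \Endo{V}^{\otimes\infty}$ is modelled on a single representation appearing infinitely often, so that all implementing unitaries can be taken $\Z/p\Z$-invariant; once this equivariant uniqueness apparatus from \cite{paper:EP-Circle} is in place, the remaining work — making the path meet the non-surjective embeddings exactly at the endpoints while staying inside the isomorphisms on the open interval, and carrying along the extra constraint $\delta(t)(e) = e\otimes e$ for $\beta$ — is routine.
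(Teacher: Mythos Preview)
Your approach is essentially the paper's: construct $\gamma$ by transporting the half-flip construction of \cite[Lem.~2.3]{paper:EP-Circle} to $\Z/p\Z$, build an analogous path $\overline{\beta}$ (your $\delta$) for $\bK$ following \cite[Lem.~2.5]{paper:EP-Circle}, and set $\beta = \tau\circ(\gamma\otimes\overline{\beta})$; the verification of (iii) via unitality of $\gamma$ and $\delta(t)(e)=e\otimes e$ is exactly right.

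Two small points where the paper is more careful than your sketch. First, for the $\bK$-path the extra constraint $\delta(t)(e)=e\otimes e$ forces the middle joining step to take place inside isomorphisms \emph{fixing} $e\otimes e$, so what is needed is path-connectedness of the stabiliser $\eqAut{\Z/p\Z,e}{\bK}$; the paper singles this out and obtains it from the $\Z/p\Z$-analogue of \cite[Thm.~2.4]{paper:EP-Circle} (in fact contractibility), whereas you only gesture at ``multiplier unitaries that fix the relevant copies of $e$''. Second, your justification for path-connectedness of $\eqAut{\Z/p\Z}{D}$ via ``approximately inner by invariant unitaries'' is not quite sufficient on its own (approximate innerness gives a sequence, not a path); the actual input is the half-flip path itself (or the asymptotic-unitary-equivalence form of strong self-absorption), which is what \cite[Lem.~2.3]{paper:EP-Circle} supplies directly, so no separate connectivity argument for $\eqAut{\Z/p\Z}{D}$ is needed and there is no circularity with Corollary~\ref{cor:Aut_contractible}.
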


\begin{proof}
    The path $\gamma$ is the result of patching together two paths that are constructed exactly as in \cite[Lem.~2.3]{paper:EP-Circle} (see also \cite[p.~11]{paper:EP-Circle}). Since the analysis in \cite[Thm.~2.4]{paper:EP-Circle} carries over to $\Z/p\Z$, the stabiliser group $\AutSt{\Z/p\Z,e}{\bK}$ is contractible in the point-norm topology (so in particular path-connected). Therefore the argument in \cite[Lem.~2.5]{paper:EP-Circle} works for $\Z/p\Z$ as well giving a continuous path
    \[
        \overline{\beta} \colon [0,1] \to \hom_{\Z/p\Z}(\bK, \bK \otimes \bK)\ .
    \]
    with properties analogous to the ones listed in the lemma. The path $\beta$ is then obtained by combining $\gamma$ and $\overline{\beta}$.
\end{proof}

\begin{corollary} \label{cor:Aut_contractible}
    Let $e \in \bK$ be a projection of rank one fixed by $\Z/p\Z$. The group $\eqAut{\Z/p\Z}{D}$ and the stabiliser subgroup $\AutSt{\Z/p\Z, 1_D\otimes e}{D \otimes \bK}$ fixing the projection $1_D \otimes e$ are both contractible.
\end{corollary}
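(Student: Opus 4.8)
The plan is to deduce both contractibility statements from Lemma~\ref{lem:contr_htpies} by an Eckmann--Hilton / shearing argument, exactly mirroring the non-equivariant reasoning in \cite{paper:EP-Circle,paper:DP-Dixmier}. First I would treat $\eqAut{\Z/p\Z}{D}$. The key point is that $D$ is strongly self-absorbing with an \emph{equivariant} isomorphism $D \cong D \otimes D$: the path $\gamma$ of Lemma~\ref{lem:contr_htpies} shows that the two tensor embeddings $d \mapsto d \otimes 1_D$ and $d \mapsto 1_D \otimes d$ are homotopic through equivariant isomorphisms on the open interval, so $D$ is ``equivariantly approximately $D$-absorbing'' in the strong sense needed. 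Given this, one runs the standard argument: for $\varphi \in \eqAut{\Z/p\Z}{D}$ consider the homotopy $t \mapsto \gamma(t)^{-1} \circ (\varphi \otimes \id{D}) \circ \gamma(t)$, which at $t=0$ is $\varphi$ (conjugating by the left embedding) and at $t=1$ is conjugation by the right embedding, i.e.\ $\id{D} \otimes \varphi$ transported back, hence independent of the ``outer'' copy in a way that lets one slide $\varphi$ to the identity. More precisely one uses that the map $\varphi \mapsto \varphi \otimes \id{D}$ is homotopic both to $\varphi$ and (via $\gamma$) to a constant, giving a contracting homotopy of $\eqAut{\Z/p\Z}{D}$. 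Continuity of all these maps in the point-norm topology is routine since $\gamma, \beta$ are continuous by construction.

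For the stabiliser group $\AutSt{\Z/p\Z, 1_D \otimes e}{D \otimes \bK}$ I would use the path $\beta$ instead, which by property iii) fixes the projection $1_D \otimes e$ at every time $t$. Thus for $\varphi \in \AutSt{\Z/p\Z, 1_D\otimes e}{D \otimes \bK}$ the conjugates $\beta(t)^{-1} \circ (\varphi \otimes \id{D \otimes \bK}) \circ \beta(t)$ again fix $1_D \otimes e$ throughout, so the whole shearing homotopy stays inside the stabiliser subgroup. The same Eckmann--Hilton manoeuvre then contracts $\AutSt{\Z/p\Z, 1_D\otimes e}{D \otimes \bK}$ to the identity, using that $D \otimes \bK \cong (D \otimes \bK)^{\otimes 2}$ equivariantly and compatibly with the distinguished projection. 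One has to check that $\varphi \otimes \id{D \otimes \bK}$, viewed via the fixed isomorphism $(D \otimes \bK)^{\otimes 2} \cong D \otimes \bK$, really does lie in the stabiliser and depends continuously on $\varphi$ and $t$; this is where property iii) of Lemma~\ref{lem:contr_htpies} is essential.

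The main obstacle I anticipate is purely bookkeeping rather than conceptual: one must carefully assemble the contracting homotopy out of the half-open interval behaviour of $\gamma$ and $\beta$ (they are isomorphisms only on $(0,1)$, and equal the non-invertible tensor embeddings at the endpoints), so that the resulting family of automorphisms is genuinely defined and continuous on all of $[0,1]$, equivariant, and fixes the basepoint $\id{}$ at one end. This is handled in \cite{paper:EP-Circle} by a reparametrisation/patching trick, and I would simply cite that: since, as noted before the statement, the analysis of \cite[Lem.~2.3--2.5, Thm.~2.4]{paper:EP-Circle} carries over verbatim to $\Z/p\Z \subseteq \bT$, the conclusion follows. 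In short: the proof is ``apply the $\Z/p\Z$-version of \cite[Thm.~3.2 and Cor.~3.3]{paper:EP-Circle} (equivalently \cite[Thm.~2.3]{paper:DP-Dixmier}) using the paths of Lemma~\ref{lem:contr_htpies}''.
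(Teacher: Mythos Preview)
Your proposal is correct and follows essentially the same approach as the paper: both use the paths $\gamma$ and $\beta$ from Lemma~\ref{lem:contr_htpies} to build the contracting homotopies, deferring the details to the corresponding result in \cite{paper:EP-Circle} (the paper cites \cite[Thm.~2.6]{paper:EP-Circle} specifically). Your outline of the shearing argument and the role of property~iii) is in fact more explicit than the paper's one-line proof.
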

\begin{proof}
    This is proven exactly as in \cite[Thm.~2.6]{paper:EP-Circle}. The contracting homotopies are constructed from the paths $\gamma$ and $\beta$, respectively, given in Lemma~\ref{lem:contr_htpies}.
\end{proof}

The above corollary allows us to determine the equivariant homotopy type of $\Aut{D \otimes \bK}$ using the evaluation map
\begin{equation} \label{eqn:Aut_to_proj}
    \eqAut{H}{D \otimes \bK} \to \Proj{}{(D \otimes \bK)^H} \quad, \quad \beta \mapsto \beta(1_D \otimes e)
\end{equation}
suitably restricted in the codomain for $H \subseteq \Z/p\Z$. As in \cite[Lem.~2.8]{paper:EP-Circle} Rieffel's criterion applies to the $\Z/p\Z$-action on $\bK$ and since colimits of saturated actions are saturated, it follows that the inclusion map induces an isomorphism
\[
    K_0((D \otimes \bK)^{\Z/p\Z}) \to K_0^{\Z/p\Z}(D \otimes \bK) \cong K_0^{\Z/p\Z}(D) \ .
\]
By stability we also have $K_0(D \otimes \bK) \cong K_0(D)$. Combining \eqref{eqn:Aut_to_proj} with the natural map $\pi_0(\Proj{}{(D \otimes \bK)^H}) \to K_0((D \otimes \bK)^H)$ and the above isomorphisms therefore gives a map
\[
   \kappa \colon \pi_0(\eqAut{H}{D \otimes \bK}) \to K_0^{H}(D)\ .
\]

\begin{lemma}
\label{units+}
The map $\kappa$ restricts to a group isomorphism
\[ \pi_{0}(\eqAut{H}{D \otimes \bK}) \xrightarrow{\cong} GL_1(K_{0}^H(D))_+\ .\]
\end{lemma}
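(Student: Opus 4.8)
The plan is to check four things for the map $\kappa$: that it is well defined and a group homomorphism, that its image lies in $GL_1(K_0^H(D))_+$, that it is injective, and that it is surjective onto $GL_1(K_0^H(D))_+$. For $H=\{e\}$ all of this is the non-equivariant statement of \cite{paper:DP-Units}, so I would concentrate on $H=\Z/p\Z$ throughout and adapt the arguments of \cite{paper:EP-Circle}; note that because we use the corrected notion of positive units from Remark~\ref{rem:glitch}, the $H=\Z/p\Z$ case cannot simply be quoted. Well-definedness is immediate from homotopy invariance of $K_0$. For the homomorphism property I would route through the monoidal structure of $\cG_D$: the operation induced on $\pi_0(\eqAut{\Z/p\Z}{D\otimes\bK})$ by the tensor product $\mu$ agrees with the one coming from composition by the Eckmann--Hilton argument (cf.\ \cite{paper:DP-Units}), $\cG_D$ is stable with compatible inverses by Lemma~\ref{GD_stable}, and $\kappa$ is visibly multiplicative for $\mu$ since the tensor product of projections corresponds, under the isomorphism $K_0((D\otimes\bK)^{\otimes 2})^{\Z/p\Z}\cong K_0^{\Z/p\Z}(D)$, to the ring product of $K_0^{\Z/p\Z}(D)$ built from $\psi^{-1}$. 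In particular $\kappa$ carries $\mathrm{id}$ to $[1_D\otimes e]=1$.

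Positivity is then quick: $\kappa(\beta)=[\beta(1_D\otimes e)]$ and $\kappa(\beta^{-1})=[\beta^{-1}(1_D\otimes e)]$ are classes of projections, hence lie in $K_0^{\Z/p\Z}(D)_+$, while $\kappa(\beta)\,\kappa(\beta^{-1})=\kappa(\mathrm{id})=1$; thus $\kappa(\beta)$ is a positive unit with positive inverse. For injectivity I would use that, after restricting the codomain as in Lemma~\ref{lem:Hurewicz}, the evaluation map \eqref{eqn:Aut_to_proj} is a Hurewicz fibration whose fibre over $1_D\otimes e$ is $\AutSt{\Z/p\Z, 1_D\otimes e}{D\otimes\bK}$, which is contractible by Corollary~\ref{cor:Aut_contractible}. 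Suppose $\kappa(\beta)=1$. Since $[1_D\otimes e]$ corresponds to $1$ and $K_0$ detects Murray--von Neumann equivalence in the (equivariantly stable, hence stable) fixed-point algebra $(D\otimes\bK)^{\Z/p\Z}$, the projection $\beta(1_D\otimes e)$ is connected to $1_D\otimes e$ through $\Z/p\Z$-fixed projections; lifting this path through the fibration starting at $\beta$ yields $\beta_1$ in the contractible, hence path-connected, stabiliser, so $\beta\simeq\beta_1\simeq\mathrm{id}$ and $[\beta]=e$.

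The heart of the argument is surjectivity. Given $x\in GL_1(K_0^{\Z/p\Z}(D))_+$, I would write it under $K_0^{\Z/p\Z}(D)\cong R[p_V^{-1}]$ as $x=[W]\,p_V^{-k}$ and $x^{-1}=[W']\,p_V^{-l}$ with $W,W'$ genuine $\Z/p\Z$-representations; then $x\,x^{-1}=1$ forces $W\otimes W'\cong V^{\otimes(k+l)}$, so for any $n\ge k$ the representation $U:=W\otimes V^{\otimes(n-k)}$ is a positive unit of $R[p_V^{-1}]$ with positive inverse. Since $H_G=\ell^2(\Z/p\Z)\otimes H_0$ contains every irreducible representation with infinite multiplicity, I can choose a $\Z/p\Z$-equivariant copy of $U$ inside $V^{\otimes n}\otimes H_G$ and let $q\in\Endo{V^{\otimes n}}\otimes\bK\subseteq D\otimes\bK$ be the corresponding $\Z/p\Z$-fixed projection; using $D\cong\Endo{V^{\otimes n}}\otimes D$ one identifies the corner $q(D\otimes\bK)q$ equivariantly with $\Endo{U}\otimes D$ and computes $[q]=x$. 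The equivariant $K_0$-group of $\Endo{U}\otimes D$ with its order and order unit is $(R[p_V^{-1}],R[p_V^{-1}]_+,[U])$, and since $[U]$ is a positive unit with positive inverse, multiplication by $[U]^{-1}$ is an order isomorphism onto the invariant $(R[p_V^{-1}],R[p_V^{-1}]_+,1)$ of $D$. By the classification of $\Z/p\Z$-actions on UHF-algebras of this inductive form (adapting \cite{paper:EP-Circle}) this lifts to a $\Z/p\Z$-equivariant $*$-isomorphism $q(D\otimes\bK)q\cong D$. Finally, since $q$ and $1_D\otimes e$ are full $\Z/p\Z$-fixed projections with equivariantly isomorphic corners in the equivariantly stable algebra $D\otimes\bK$, a Morita-theoretic argument as in \cite{paper:EP-Circle} produces a $\Z/p\Z$-equivariant automorphism $\beta$ of $D\otimes\bK$ with $\beta(1_D\otimes e)=q$, so that $\kappa(\beta)=x$.

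I expect the main obstacle to be precisely the last two steps of the surjectivity argument: verifying that the isomorphism of ordered equivariant $K_0$-data lifts to an honest $\Z/p\Z$-equivariant $*$-isomorphism $q(D\otimes\bK)q\cong D$, and that two full $\Z/p\Z$-fixed projections with isomorphic corners are interchanged by a $\Z/p\Z$-equivariant automorphism of $D\otimes\bK$. Both rest on transporting the relevant classification- and Morita-type results of \cite{paper:EP-Circle} from circle actions to the $\Z/p\Z$-setting, and care is needed because the $\Z/p\Z$-representations involved are finite-dimensional rather than graded. By contrast, the homotopy-theoretic bookkeeping (Eckmann--Hilton, the fibration argument, homotopy invariance) is routine once Lemma~\ref{GD_stable}, Corollary~\ref{cor:Aut_contractible} and Lemma~\ref{lem:Hurewicz} are in hand.
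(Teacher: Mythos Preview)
Your treatment of the homomorphism property, positivity, and injectivity is essentially the same as the paper's (which cites \cite[Cor.~2.17]{paper:DP-Dixmier} for $H=\{e\}$ and the argument of \cite[Lem.~2.9]{paper:EP-Circle}, based on the paths of Lemma~\ref{lem:contr_htpies}, for $H=\Z/p\Z$). The difference is in surjectivity, where the paper takes a much more elementary route than you do.

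You realise the positive unit $x$ as the class of a projection $q$, identify the corner $q(D\otimes\bK)q$ with $\Endo{U}\otimes D$, and then invoke (i) a classification result for $\Z/p\Z$-actions on UHF-algebras to lift the match of ordered $K_0^{\Z/p\Z}$-data to an equivariant $*$-isomorphism $q(D\otimes\bK)q\cong D$, and (ii) an equivariant Morita argument to promote this to an automorphism of $D\otimes\bK$ sending $1_D\otimes e$ to $q$. As you yourself note, both (i) and (ii) require transporting nontrivial results from the circle to $\Z/p\Z$, and neither is available directly in the paper. The paper avoids both entirely: writing $x=r/p_V^k$ with $r\in R_+$ and using that $x^{-1}$ is also positive gives $s\in R_+$ with $r\cdot s=p_V^{\,l}$, hence a tensor decomposition $V_r\otimes V_s\cong V^{\otimes l}$ and $\Endo{V_r}\otimes\Endo{V_s}\cong\Endo{V}^{\otimes l}$. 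This makes the absorption $\Endo{V_r}\otimes D\cong D$ manifest (no classification needed), and the desired automorphism is assembled by hand as the composite of the shift isomorphism $\psi_r\colon \Endo{V_r}\otimes D\to D$ with an explicit $\phi_r\colon \bK\otimes\Endo{V_r}\to\bK$ coming from $V_r\otimes H_G\cong H_G$; a short case split (whether $V_s$ contains the trivial representation, handled by twisting by a character $\C_m$) and composition with $\alpha_{p_V}^{-k}$ finish the argument. So your strategy should work in principle, but the paper's explicit construction is both shorter and sidesteps exactly the two obstacles you anticipated.
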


\begin{proof}
The case $H = \{e\}$ is proven in \cite[Cor. 2.17]{paper:DP-Dixmier}. That $\kappa$ is multiplicative in the case $H = \Z/p\Z$ is shown as in the proof of \cite[Lem.~2.9]{paper:EP-Circle} and is based on the paths constructed in Lemma~\ref{lem:contr_htpies}. The argument also proves that $[\alpha^{-1}] \in \pi_0(\eqAut{\Z/p\Z}{D \otimes \bK})$ is the inverse of $[\alpha]$. Hence, $\kappa([\alpha^{-1}])$ is the inverse of $\kappa([a])$ and in particular positive. Thus, the restriction gives a well-defined group homomorphism to $GL_1(K_0^H(D))_+$. 

Showing its injectivity can be done using the same argument as in the proof of \cite[Lem.~2.9]{paper:EP-Circle}. For reasons that will become clear in Remark~\ref{rem:glitch} we spell out the proof of surjectivity explicitly. Recall that 
\[
    K_0^{\Z/p\Z}(\C) \cong \Z[t]/(t^p - 1) =: R \quad \text{and} \quad K_0^{\Z/p\Z}(D) \cong R[p_V^{-1}]\ ,
\]
where $p_V(t)$ denotes the polynomial corresponding to the defining representation of $D = \Endo{V}^{\otimes \infty}$. Let $q \in GL_1(R[p_V^{-1}])_+$. Then
\[
    q = \frac{r}{p_V^k}
\]
for some $r \in R_+$, $k \in \N_0$ and by our definition of $GL_1(R[p_V^{-1}])_+$ there is $s \in R_+$ such that 
\(
    r \cdot s = p_V^l
\) for some $l \in \N_0$. Since $r$ and $s$ are both positive, this corresponds to a tensor product decomposition of the $\Z/p\Z$-representation $V^{\otimes l}$, i.e.\ we have two representations $V_r$ and $V_s$ corresponding to $r$ and $s$, respectively, such that
\begin{equation} \label{eqn:V_End_iso}
    V_r \otimes V_s \cong V^{\otimes l} \quad \text{and} \quad \Endo{V_r} \otimes \Endo{V_s} \cong \Endo{V}^{\otimes l}\ .
\end{equation}
First assume that $V_s$ contains a copy of the trivial representation. Then $V_r$ is a subrepresentation of $V^{\otimes l}$. Observe that because of \eqref{eqn:V_End_iso} the $G$-$C^*$-algebra~$D$ tensorially absorbs $\Endo{V^r}$. Let  
\[
    \psi_r \colon \Endo{V_r} \otimes D \cong \Endo{V_r} \otimes (\Endo{V_r} \otimes \Endo{V_s})^{\otimes \infty} \to D\ .
\]
be the equivariant $*$-isomorphism that shifts the tensor factors and identifies the result with $D$ using \eqref{eqn:V_End_iso}.
Since $V_r \otimes H \cong H$ for $H = \ell^2(\Z/p\Z) \otimes H_0$, we can also choose an isomorphism 
\[
    \phi_r \colon \bK(H) \otimes \Endo{V_r} \to \bK(H \otimes V_r) \to \bK(H)\ .
\]
The composition $(\phi_r \otimes \id{D}) \circ (\id{\bK} \otimes \psi_r^{-1})$ gives $\alpha_r \in \eqAut{\Z/p\Z}{D \otimes \bK}$ after swapping $D$ and $\bK$. By construction, $\alpha_r(1_D \otimes e)$ is an invariant projection, whose image is the sub\-re\-pre\-sen\-ta\-tion~$V_r$, i.e.\ $[\alpha_r(1_D \otimes e)] = r \in R_+$. Similarly, we can construct $\alpha_{p_V} \in \eqAut{\Z/p\Z}{D \otimes \bK}$ with $[\alpha_{p_V}(1_D \otimes e)] = p_V$. Since $\kappa$ is a group homomorphism, we have $\kappa([(\alpha_{p_V}^{-1})^k \circ \alpha_r]) = q$. 

If $V_s$ does not contain the trivial representation, then there is $m \in \Z/p\Z$ such that $\C_m \otimes V_s$ does, where $\C_m$ is the character corresponding to $m$. The above argument then shows that $t^{-m}q(t)$ is represented by an automorphism $\beta$. As described in the proof of \cite[Lem.~2.9]{paper:EP-Circle} there is an equivariant automorphism $\alpha_t \in \eqAut{\Z/p\Z}{\bK}$ with the property $[\alpha_t(e)] = t \in R_+$ and we have $\kappa([(\id{D} \otimes \alpha_t^m) \circ \beta]) = q$.
\end{proof}

\begin{remark} \label{rem:glitch}
    There is a small mistake in the proof of \cite[Lem.~2.9]{paper:EP-Circle}: The definition of $GL_1(R_+)$, the surjectivity part of the proof of the lemma and the computation of the generators of $GL_1(R_+)$ are actually only correct under the additional assumption that the prime factors of $p_V$ are positive. This is not automatic as the example
    \[
        p_V(t) = 1+t^3 = (1 - t + t^2)(1+t)
    \]
    shows. The above proof of surjectivity also works in the case $G = \bT$ and fixes this gap. Note that there is another reason why the proof of \cite[Lem.~2.9]{paper:EP-Circle} does not carry over verbatim: The ring $\Z[t]/(t^p - 1)$ contains zero divisors and is therefore not a unique factorization domain.
\end{remark}

\begin{prop}
\label{ProjBU}
There is a homotopy equivalence
\[ \Proj{1 \otimes e}{(D \otimes \bK)^H} \simeq BU(D^H)\ .\]
\end{prop}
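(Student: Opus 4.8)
The plan is to identify the space $\Proj{1 \otimes e}{(D \otimes \bK)^H}$ of projections in $(D\otimes\bK)^H$ that are equivalent to $1_D\otimes e$ (i.e.\ represent the class of the trivial representation in $K_0$) with the classifying space of the unitary group of the fixed-point algebra $D^H$. First I would recall that $D^H$ is a unital AF-algebra (a colimit of the fixed-point algebras $(\Endo{V}^{\otimes n})^H$, each of which is a finite direct sum of matrix algebras), so that $U(D^H)$ has a reasonable homotopy type and $K_0(D^H)\cong K_0^H(D)$. Writing $A = (D\otimes\bK)^H$, a projection $p \in A$ equivalent to $q_0 := 1_D \otimes e$ can be connected to $q_0$ by a partial isometry $v$ with $v^*v = q_0$, $vv^* = p$, and the stabiliser of $q_0$ under the action of the unitary group of a suitable multiplier algebra is exactly a copy of the unitaries of the corner $q_0 A q_0 \cong D^H$ (using the saturation/full corner facts already invoked for Lemma~\ref{units+}, together with $e$ being rank one and invariant so that $q_0(D\otimes\bK)^H q_0 \cong D^H$).

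The core step is therefore a standard principal-bundle argument. I would let $\mathcal{U}$ denote the unitary group of the stabilisation $\mathcal{M}(A)$ (or, more concretely, the group $\AutSt{\Z/p\Z,e}{\bK}$-type unitaries acting on $D\otimes\bK$), which acts by conjugation on the component of $q_0$ in $\Proj{}{A}$; the orbit is exactly $\Proj{1\otimes e}{(D\otimes\bK)^H}$ because any two projections in the class of $[q_0]\in K_0(A)$ in this stable algebra are unitarily equivalent (Murray--von Neumann plus stability). The stabiliser of $q_0$ is $U(q_0 A q_0)\times U((1-q_0)A(1-q_0)) \cong U(D^H)\times U(A)$, so one gets a fibration
\[
    U(D^H)\times U(A) \to \mathcal{U} \to \Proj{1\otimes e}{(D\otimes\bK)^H}\ .
\]
Since $\mathcal{U}$ is contractible — this is the equivariant/relative version of Kuiper's theorem for the unitary group of a stable algebra, and it is the exact analogue of the contractibility statements already used above, e.g.\ in Corollary~\ref{cor:Aut_contractible} — and since $U(A)$ is contractible for the same reason, the long exact sequence of the fibration (or the Milnor sequence $\Omega(\mathrm{pt}) \to U(D^H) \to \mathcal{U} \to \Proj{1\otimes e}{\cdots}$) collapses to give a weak equivalence $\Proj{1\otimes e}{(D\otimes\bK)^H} \simeq B(U(D^H)\times U(A)) \simeq BU(D^H)$. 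I would check local triviality of this bundle via the usual trick: near a projection $p$ close to $q_0$, the element $z = q_0 p + (1-q_0)(1-p)$ is invertible, and $u = z(z^*z)^{-1/2}$ is a unitary with $u q_0 u^* = p$ depending continuously on $p$, giving a local section; this uses only that projections close in norm are unitarily equivalent by an explicitly constructed unitary.

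The main obstacle I expect is bookkeeping the precise multiplier algebra and the precise unitary group whose contractibility is being invoked, since $D\otimes\bK$ is non-unital and one must work with $\mathcal{M}(D\otimes\bK)^H$ or, better, with the picture $\bK(H_G \otimes \ell^2) $-stabilised as in \cite{paper:EP-Circle} so that Kuiper-type contractibility is available equivariantly; concretely one wants $U_{\Z/p\Z}(\mathcal{M}(D\otimes\bK))$ (point-strong topology) to be $\Z/p\Z$-equivariantly contractible, which follows from the results of \cite{paper:EP-Circle} cited for Corollary~\ref{cor:Aut_contractible} (or a Dixmier--Douady-style argument using that $D$ is strongly self-absorbing and $\bK$ is stable). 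A secondary subtlety is confirming that $q_0 A q_0 \cong D^H$ on the nose rather than $D^H \otimes \bK$; this is where one uses that $e$ is a rank-one invariant projection so that $e\bK e \cong \C$, hence $q_0(D\otimes\bK)^H q_0 = (D \otimes e\bK e)^H = D^H$. Everything else — multiplicativity, naturality in $H$, compatibility with the maps to $K$-theory from Lemma~\ref{units+} — is routine and can be deferred to the later sections where the comparison with $\cG_D$ is made.
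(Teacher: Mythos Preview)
Your argument is correct and is essentially the paper's approach: as spelled out in Remark~\ref{RemProjBU}, the proof constructs a contractible principal $U(D^H)$-bundle $EU(D^H)=U(\mathcal{M})/U((1-p_0)\mathcal{M}(1-p_0))$ over the projection space, which is exactly your fibration after first quotienting out the contractible complementary-corner factor of the stabiliser. One small correction: the stabiliser of $q_0$ in $\mathcal{U}=U(\mathcal{M}(A))$ is $U(D^H)\times U\bigl((1-q_0)\mathcal{M}(A)(1-q_0)\bigr)$ rather than $U\bigl((1-q_0)A(1-q_0)\bigr)$, but since that corner is again the multiplier algebra of a stable algebra its unitary group is contractible and your conclusion is unaffected.
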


\begin{proof}
See \cite[Cor. 2.9]{paper:DP-Dixmier} for $H=\{e\}$. The proof of the case $H = \Z/p\Z$ is the same as the one of \cite[Prop. 3.2]{paper:EP-Circle}.
\end{proof}

\begin{remark}
\label{RemProjBU}
Let $V$ be a right Hilbert $A$-module and note Hilbert $A$-module bundles with fibre isomorphic to $V$ over $X$ are up to isomorphism in bijection with $[X, BU(V)]$, where $U(V)$ denotes the unitary group of $V$. This observation allows us to reinterpret Proposition \ref{ProjBU} in terms of a statement about right Hilbert $D^{\Z/p\Z}$-module bundles. In fact, the proof works by constructing a principal $U(D^{\Z/p\Z})$-bundle 
\[EU(D^{\Z/p\Z})=\frac{U(M(D^{\Z/p\Z}))}{U((1-p_0)M(D^{\Z/p\Z})(1-p_0))}\]
over $\Proj{1 \otimes e}{(D \otimes \bK)^{\Z/p\Z}}$ (where we have set $p_0=e \otimes 1 \in H_G \otimes D^{\Z/p\Z}$) and showing that it is contractible. The bundle $EU(D^{\Z/p\Z})$ defines in turn a right Hilbert $D^{\Z/p\Z}$-module bundle
\[\cH_{\text{univ}}=EU(D^{\Z/p\Z})\times_{U(D^{\Z/p\Z})} D^{\Z/p\Z}\]
over $\Proj{p_0}{(D \otimes \bK)^{\Z/p\Z}}$. We can also define naturally another right Hilbert $D^{\Z/p\Z}$-module bundle over the same base space, namely
\[\tilde{\cH}_{\text{univ}}=\{(p,\xi) \in \Proj{p_0}{(D \otimes \bK)^{\Z/p\Z}} \times (H_G \otimes D^{\Z/p\Z}) : p \cdot \xi = \xi\}\ .\]
Then the map 
\[\alpha: \cH_{\text{univ}} \rightarrow \tilde{\cH}_{\text{univ}}\]
defined by $\alpha([u,b])=(u p_0 u^*,u \cdot (e_{0} \otimes b))$, where $e_0 \in H$ is a unit vector in the image of $p_0$, is a morphism of right Hilbert $D^{\Z/p\Z}$-module bundles, with the properties of being an isomorphism on the fibres and making the triangle
\[\begin{tikzcd}
	{\mathcal{H}_{\text{univ}}} && {\tilde{\mathcal{H}}_{\text{univ}}} \\
	& {\text{Proj}_{p_0}((D \otimes K)^{\Z/p\Z})}
	\arrow[from=1-1, to=2-2]
	\arrow[from=1-3, to=2-2]
	\arrow["\alpha", from=1-1, to=1-3]
\end{tikzcd}\]
commute. Thus it is an isomorphism of right Hilbert $D^{\Z/p\Z}$-module bundles.
\end{remark}

\begin{prop}
\label{AutProj}
There is a homotopy equivalence
\[\eqAutId{H}{D \otimes \bK} \simeq \Proj{1 \otimes e}{(D \otimes \bK)^H}\]
for $H \subseteq \Z/p\Z$.
\end{prop}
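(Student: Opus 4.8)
The plan is to study the equivariant evaluation map \eqref{eqn:Aut_to_proj} coming from the $\Z/p\Z$-equivariant map $\operatorname{ev} \colon \Aut{D \otimes \bK} \to \Proj{}{D \otimes \bK}$, $\beta \mapsto \beta(1_D \otimes e)$, where $\Z/p\Z$ acts on $\Aut{D \otimes \bK}$ by conjugation with $\alpha$ (this map is equivariant precisely because $1_D \otimes e$ is $\alpha$-fixed). Passing to $H$-fixed points identifies $\Aut{D \otimes \bK}^H$ with $\eqAut{H}{D \otimes \bK}$ and $\Proj{}{D \otimes \bK}^H$ with $\Proj{}{(D \otimes \bK)^H}$, recovering \eqref{eqn:Aut_to_proj}. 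I would first restrict the codomain to the path component $\Proj{1 \otimes e}{(D \otimes \bK)^H}$ of $1_D \otimes e$; by continuity the identity component $\eqAutId{H}{D \otimes \bK}$ is contained in the preimage. This runs exactly as in the circle case \cite{paper:EP-Circle} (and as in \cite{paper:DP-Dixmier} for $H = \{e\}$); note that since $p$ is prime, only $H = \{e\}$ and $H = \Z/p\Z$ occur.

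The two key inputs are: (i) suitably restricted in the codomain, $\operatorname{ev}$ is a Hurewicz $\Z/p\Z$-fibration by Lemma~\ref{lem:Hurewicz}, and, since $H$-fixed points of a Hurewicz $G$-fibration form a Hurewicz fibration, \eqref{eqn:Aut_to_proj} is itself a Hurewicz fibration over $\Proj{1 \otimes e}{(D \otimes \bK)^H}$; (ii) the fibre of $\operatorname{ev}$ over $1_D \otimes e$ is the stabiliser $\AutSt{H, 1_D \otimes e}{D \otimes \bK}$, which is contractible -- for $H = \Z/p\Z$ this is Corollary~\ref{cor:Aut_contractible}, and for $H = \{e\}$ it is the non-equivariant statement of \cite{paper:DP-Dixmier}.

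Given these, I would run the long exact sequence of homotopy groups of the restricted fibration $\operatorname{ev}^{-1}(\Proj{1 \otimes e}{(D \otimes \bK)^H}) \to \Proj{1 \otimes e}{(D \otimes \bK)^H}$. Contractibility of the fibre makes $\pi_n(\operatorname{ev})$ an isomorphism for $n \geq 1$, and, since the base is a single path component, it forces the total space of the restricted fibration to be path-connected; being path-connected and containing $\id{D \otimes \bK}$, this total space is exactly $\eqAutId{H}{D \otimes \bK}$. Hence $\operatorname{ev}$ restricts to a weak homotopy equivalence $\eqAutId{H}{D \otimes \bK} \to \Proj{1 \otimes e}{(D \otimes \bK)^H}$, which I would upgrade to a genuine homotopy equivalence by Whitehead's theorem: the source has the homotopy type of a CW complex by Lemma~\ref{lem:GCW-complex}, and the target does too, since it is homotopy equivalent to $BU(D^H)$ by Proposition~\ref{ProjBU}.

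The main obstacle is establishing the fibration property of the evaluation map, which is precisely the content of the technical Lemma~\ref{lem:Hurewicz} in the appendix; the remaining work is bookkeeping with path components (to identify the restricted total space with the identity component) together with the routine CW-type arguments needed to pass from a weak to an honest homotopy equivalence.
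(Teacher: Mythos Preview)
Your proposal is correct and follows essentially the same route as the paper: both use the evaluation map $\beta \mapsto \beta(1_D \otimes e)$ and the contractibility of the stabiliser from Corollary~\ref{cor:Aut_contractible}. The only minor difference is packaging---the paper invokes the principal $\AutSt{H,1_D\otimes e}{D\otimes\bK}$-bundle structure (so contractibility of the structure group yields an honest homotopy equivalence directly), whereas you use the Hurewicz fibration from Lemma~\ref{lem:Hurewicz} together with Whitehead's theorem; both are valid and your bookkeeping with path components and CW-types is sound.
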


\begin{proof}
The proof is based on the observation that the evaluation map
\[
    \eqAutId{H}{D \otimes \bK} \to \Proj{1 \otimes e}{(D \otimes \bK)^H} \quad, \quad \beta \mapsto \beta(1 \otimes e)
\]
is a principal $\AutSt{\Z/p\Z,1 \otimes e}{D \otimes \bK}$-bundle with contractible fibre by Corollary~\ref{cor:Aut_contractible}. For further details see \cite[Cor. 2.9]{paper:DP-Dixmier} for $H=\{e\}$, and \cite[Prop.~3.3]{paper:EP-Circle} where this is discussed for the circle group $\bT$ with a proof that carries over to the case $H=\Z/p\Z$.
\end{proof}

The following elementary consideration will be needed in order to compute the $K_0$-group of the fixed point algebra of $D$.

\begin{lemma}
\label{coefficients}
Let $V$ be a $\Z/p\Z$-representation which contains at least two non-isomorphic irreducible subrepresentations, and let $p_{V}=\sum_{i=0}^{p-1} a_{i} t^{i}$ be the associated character polynomial. Then there exists $N \in \N$ such that the $N$-th power
\[p_{V}^{N}=\sum_{j=0}^{p-1} b_{j} t^{j}\]
has coefficients $b_{j} \ge 2$ for all $j \in \{0,\dots,p-1\}$.
\end{lemma}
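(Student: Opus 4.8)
The plan is to work with the character polynomial $p_V(t) = \sum_{i=0}^{p-1} a_i t^i$ with all $a_i \ge 0$, not all zero, and reduce to an elementary statement about nonnegative integer vectors under cyclic convolution. Write $R = \Z[t]/(t^p-1)$ and identify an element $\sum c_i t^i \in R$ with the vector $(c_0,\dots,c_{p-1}) \in \Z^p$; multiplication in $R$ is then cyclic convolution of vectors. The hypothesis that $V$ contains two non-isomorphic irreducible subrepresentations means that $p_V$ has at least two distinct nonzero coefficients among $a_0,\dots,a_{p-1}$ — in particular the support of $p_V$ has size at least $2$ and $p_V$ is not a scalar multiple of $1 + t + \dots + t^{p-1}$ only in the trivial sense; what we really use is just that $p_V$ is nonzero and not supported on a single index.

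First I would record that the coefficients of $p_V^N$ are all strictly positive for $N$ large enough, by the following: since $p_V \ne 0$ with nonnegative coefficients, the support $S = \{i : a_i \ne 0\} \subseteq \Z/p\Z$ is nonempty; the support of $p_V^N$ is the $N$-fold sumset $S + S + \dots + S$ inside the group $\Z/p\Z$. Because $p$ is prime and $|S| \ge 2$, the subgroup generated by $S - S$ is all of $\Z/p\Z$, and a standard Cauchy–Davenport / sumset-growth argument shows that $\underbrace{S + \dots + S}_{N}$ equals all of $\Z/p\Z$ once $N$ is large enough (indeed $N = p$ suffices, since iterated sumsets strictly grow until they fill the group). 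Hence for $N_0$ large, every coefficient of $p_V^{N_0}$ is at least $1$.

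Next I would upgrade "at least $1$" to "at least $2$". Fix such an $N_0$, so $p_V^{N_0} = \sum_j c_j t^j$ with every $c_j \ge 1$. Then for any $M \ge 1$,
\[
    p_V^{M N_0} = \left(\sum_j c_j t^j\right)^{M},
\]
and the coefficient of $t^k$ in this power is $\sum_{j_1 + \dots + j_M \equiv k} c_{j_1}\cdots c_{j_M} \ge$ (number of such tuples) $= p^{M-1} \ge 2$ as soon as $M \ge 2$ (using $p \ge 2$). So taking $N = 2 N_0$ works: every coefficient $b_j$ of $p_V^N$ satisfies $b_j \ge 2$. This finishes the proof.

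The only mildly delicate point — and the part I'd treat most carefully — is the claim that the iterated sumset $S + \dots + S$ exhausts $\Z/p\Z$; this is where primality of $p$ and $|S| \ge 2$ are both essential (for a single-point support $S$ the sumset never grows, which is exactly why the two-irreducibles hypothesis is needed). Everything after that is bookkeeping with nonnegative coefficients. In fact the hypothesis about \emph{non-isomorphic} irreducibles is slightly stronger than needed — one only needs the support of $p_V$ to not be a single point — but stating it this way matches how the lemma will be applied, since a representation built from a single irreducible has $p_V$ of the form $a t^i$.
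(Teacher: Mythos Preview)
Your proof is correct. The approach, however, differs from the paper's. You argue via sumset growth in $\Z/p\Z$: since coefficients are nonnegative, the support of $p_V^N$ is the $N$-fold sumset of the support $S$ of $p_V$, and Cauchy--Davenport (or any elementary growth argument for sumsets in a prime cyclic group) shows this fills $\Z/p\Z$ once $N$ is large; squaring once more then forces every coefficient to be at least $p\ge 2$. The paper instead gives a direct hands-on argument: it first reduces to the case $a_1\neq 0$ by noting that if $a_k\neq 0$ for some $k\in\{2,\dots,p-1\}$ then $p_V^r$ has nonzero $t^1$-coefficient for $r$ the multiplicative inverse of $k$ modulo $p$; once $a_1\neq 0$, repeated squaring propagates the support one step at a time and simultaneously pushes coefficients above $2$. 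Your route is cleaner and makes the role of primality completely transparent (it is exactly what makes the sumset grow), and your ``upgrade'' from $\ge 1$ to $\ge 2$ via one extra squaring is slicker than the paper's bookkeeping. The paper's argument, on the other hand, avoids quoting any additive-combinatorics statement and stays entirely elementary. Both proofs ultimately exploit the same phenomenon, namely that convolution with a vector supported on at least two residues strictly enlarges the support in $\Z/p\Z$ until it is full.
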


\begin{proof}
We know that the character polynomial $p_{V}(t)=\sum_{i=0}^{d} a_{i} t^{i} \in \frac{\Z[t]}{(t^{p}-1)}$ has at least two nonzero coefficients. First, assume one of them is $a_{1}$ and let the other one be $a_{l}$, $l \in \{0,2,\dots,p-1\}$. By taking the square $p_{V}^{2}$ we get $a_2>0$ and the coefficient of $t^{l+1}$ increases by at least $2$, and by iterating the process we get $N \in \N$ satisfying the claim.\\
If $a_{1}=0$, by assumption there exists $k \in \{2,\dots,p-1\}$ such that $a_{k} \ne 0$. Let $r$ be the multiplicative inverse of $k$ mod $p$. Then the first order coefficient of $p_{V}^{r}$ is nonzero and the above argument applies. 
\end{proof}

\begin{remark}
The above lemma is actually the only reason why we work with a prime number $p$. In fact the results in the paper hold for cyclic groups of any order, provided that a suitable representation $V$ (i.e., such that $p_V$ satisfies the claim of Lemma \ref{coefficients}) is chosen.
\end{remark}

\begin{lemma}
\label{kt_fixed}
The $K$-groups of the fixed-point algebra $D^{\Z/p\Z}$ are given by
\[K_0(D^{\Z/p\Z}) \cong R(\Z/p\Z)[p_V^{-1}] \cong K_0^{\Z/p\Z}(D) \text{ and } K_1(D^{\Z/p\Z})=0\ .\]
\end{lemma}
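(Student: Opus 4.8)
Here is how I would prove Lemma~\ref{kt_fixed}.

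\textbf{Overall strategy.} The plan is to realise $D^{\Z/p\Z}$ as an inductive limit of finite‑dimensional $C^*$-algebras and compute $K$-theory in the colimit. Since the $\Z/p\Z$-action on $D$ is the infinite tensor product of the adjoint action, it restricts to each stage $\Endo{V}^{\otimes n}=\operatorname{End}(V^{\otimes n})$ of the defining inductive system, and the conditional expectation $E=\tfrac1p\sum_{g}g$ has norm one and intertwines the (equivariant) connecting maps $T\mapsto T\otimes 1_V$; hence every $a\in D^{\Z/p\Z}$ is a norm limit of elements $E(b)$ with $b$ in $\bigcup_n\Endo{V}^{\otimes n}$, so $D^{\Z/p\Z}=\colim_n(\Endo{V}^{\otimes n})^{\Z/p\Z}$. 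As $K$-theory is continuous it remains to identify the finite stages and the effect of the connecting maps on $K_0$. Now $(\Endo{V}^{\otimes n})^{\Z/p\Z}$ is the commutant of $\rho(\Z/p\Z)^{\otimes n}$, i.e.\ the algebra $\operatorname{End}_{\Z/p\Z}(V^{\otimes n})$ of equivariant endomorphisms; decomposing $V^{\otimes n}=\bigoplus_{j=0}^{p-1}m_j^{(n)}\chi_j$ into isotypic components for the characters $\chi_0,\dots,\chi_{p-1}$ of $\Z/p\Z$ and applying Schur's lemma gives
\[
    (\Endo{V}^{\otimes n})^{\Z/p\Z}\cong\bigoplus_{j\,:\,m_j^{(n)}>0}M_{m_j^{(n)}}(\C)\ ,
\]
where $m_j^{(n)}$ is the coefficient of $t^j$ in $p_V^n\in R:=\Z[t]/(t^p-1)=R(\Z/p\Z)$. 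In particular every stage has vanishing $K_1$, so $K_1(D^{\Z/p\Z})=\colim_n 0=0$.

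\textbf{Identifying $K_0$ inside $R$.} Next I would embed the $K_0$-groups into the representation ring: sending a projection $q\in(\Endo{V}^{\otimes n})^{\Z/p\Z}$ to the class $[\,q(V^{\otimes n})\,]\in R$ of its range representation defines an injective homomorphism $K_0((\Endo{V}^{\otimes n})^{\Z/p\Z})\hookrightarrow R$ with image the subgroup spanned by $\{t^j:m_j^{(n)}>0\}$, taking a minimal projection of the $j$-th block to $t^j=[\chi_j]$. The connecting map $S\mapsto S\otimes 1_V$ carries such a minimal projection to a projection onto a copy of $\chi_j\otimes V$, whose range representation has class $t^j p_V$; since $K_0$ of a finite direct sum of matrix algebras is generated by minimal projections, it follows that, under these embeddings, the connecting homomorphism on $K_0$ is the restriction of multiplication by $p_V=[V]$ on $R$.

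\textbf{Passing to the colimit and comparing with $K_0^{\Z/p\Z}(D)$.} Here I would invoke Lemma~\ref{coefficients} (this is the one place the assumption that $V$ has two non-isomorphic irreducible subrepresentations is used): there is $N\in\N$ with all coefficients of $p_V^N$ positive (in fact $\geq 2$). Writing $p_V^n=p_V^N\cdot p_V^{\,n-N}$ with $p_V^{\,n-N}$ a nonzero nonnegative character polynomial, one gets that $p_V^n$ has all coefficients positive for every $n\geq N$, so $m_j^{(n)}>0$ for all $j$ and $K_0((\Endo{V}^{\otimes n})^{\Z/p\Z})\cong R$ for $n\geq N$, with connecting maps given by multiplication by $p_V$. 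Discarding the first $N$ stages does not change the colimit, so
\[
    K_0(D^{\Z/p\Z})\cong\colim\bigl(R\xrightarrow{\ \cdot p_V\ }R\xrightarrow{\ \cdot p_V\ }R\to\cdots\bigr)\cong R[p_V^{-1}]\ .
\]
The remaining identification $K_0(D^{\Z/p\Z})\cong K_0^{\Z/p\Z}(D)$ then follows by comparison with the computation $K_0^{\Z/p\Z}(D)\cong R[p_V^{-1}]$ recalled just before the lemma (obtained by the analogous continuity argument on equivariant $K$-theory; alternatively one may route through the saturated action on the stabilisation, noting $(D\otimes\bK)^{\Z/p\Z}$ is a full corner of $(D\otimes\bK)\rtimes\Z/p\Z$).

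\textbf{Main obstacle.} The delicate point is the finite-stage bookkeeping: for small $n$ the group $K_0((\Endo{V}^{\otimes n})^{\Z/p\Z})$ is only a proper subgroup of $R$, and the statement genuinely fails when $V$ is isotypic (then $D$ carries the trivial action, $K_0(D^{\Z/p\Z})$ has rank $1$, while $K_0^{\Z/p\Z}(D)$ has rank $p$). One therefore has to wait until $V^{\otimes n}$ contains every irreducible character before the system stabilises to $R\xrightarrow{\cdot p_V}R\to\cdots$, which is exactly the content of Lemma~\ref{coefficients}; care is also needed to make the embeddings into $R$ and the connecting maps compatible across all stages. The comparison with $K_0^{\Z/p\Z}(D)$ is a secondary subtlety, since it is not induced by any obvious $*$-homomorphism $D^{\Z/p\Z}\to D$ and is cleanest to see by matching the two localisation computations.
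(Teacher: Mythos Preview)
Your proposal is correct and follows essentially the same route as the paper's proof: express $D^{\Z/p\Z}$ as the inductive limit of $(\Endo{V}^{\otimes n})^{\Z/p\Z}$, identify each stage with a direct sum of matrix algebras via Schur's lemma, observe that on $K_0$ the connecting maps become multiplication by $p_V$ inside $R(\Z/p\Z)$, invoke Lemma~\ref{coefficients} so that eventually every irreducible appears and the system becomes $R\xrightarrow{\cdot p_V}R\to\cdots$, and conclude by continuity (with $K_1=0$ because the algebra is AF). Your added remarks---the conditional-expectation justification for commuting fixed points with the colimit, and the explicit observation that the statement fails when $V$ is isotypic---are welcome clarifications but do not change the argument.
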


\begin{proof}
The proof works similarly as in \cite[Lem. 3.5]{paper:EP-Circle}. First note that because the group acts on each factor separately, the fixed-point algebra $D^{\Z/p\Z}$ is isomorphic to the direct limit
\[ \End(V)^{\Z/p\Z} \rightarrow \cdots \rightarrow \bigl(\End(V)^{\otimes n}\bigr)^{\Z/p\Z} \rightarrow \bigl(\End(V)^{\otimes (n+1)}\bigr)^{\Z/p\Z} \rightarrow \cdots,\]
where the connecting $*$-homomorphisms are given by $T \mapsto T \otimes 1$. For each factor we have
\[\bigl(\End(V)^{\otimes n}\bigr)^{\Z/p\Z} \cong \hom_{\Z/p\Z}(V^{\otimes n}, V^{\otimes n})\ ,\]
which is in turn isomorphic to a direct sum of matrix algebras. Combining the decomposition of $V$ into character subspaces 
\[V=\bigoplus_{k \in \Z} V_k\ ,\]
where $V_k=\{v \in V \, | \, \rho(z)v=z^k v \, \, \forall \, z \in \Z/p\Z\}$, with Schur's lemma this turns out to be
\[\bigoplus_{k \in \Z}\hom_{\Z/p\Z}(V_k, V_k) \cong \bigoplus_i M_{n_i}(\C) \otimes \hom_{\Z/p\Z}(\C_{m_i},\C_{m_i}) \cong \bigoplus_i M_{n_i} (\C)\]
where $V_{m_i} \cong \C^{n_i} \otimes \C_{m_i}$. This implies that $K_0(\End(V)^{\otimes n})^{\Z/p\Z}$ is the free abelian group generated by the irreducible subrepresentations of $V^{\otimes n}$. If we apply $K_0$ to each term in the direct limit above and identify a representation with its character polynomial, the connecting isomorphism becomes multiplication by $p_V$. Moreover, Lemma \ref{coefficients} implies in particular that $p_V^N$ contains all powers of $t$ with positive coefficients. The claim for $K_0$ follows by combining these two considerations and continuity of $K$-theory. Since $D^{\Z/p\Z}$ is an AF-algebra we also have $K_1(D^{\Z/p\Z})=0$.\\
For equivariant $K$-theory, note that 
\[K_0^{\Z/p\Z}(\End(V)^{\otimes n}) \cong K_0^{\Z/p\Z}(\C) \cong R(\Z/p\Z)\] 
by stability, and then again by continuity $K_0^{\Z/p\Z}(D) \cong R(\Z/p\Z)[p_V^{-1}]$. The description as a direct limit also shows that this is in fact a ring isomorphism.
\end{proof}

We can now compute the homotopy groups of $\eqAut{\Z/p\Z}{D \otimes \bK}$. By Proposition \ref{ProjBU} and Proposition \ref{AutProj}, this boils down to computing the ones of $U(D^{\Z/p\Z})$. Here the fact that the representation ring of $\Z/p\Z$ has finite rank over $\Z$ plays a fundamental role.

\begin{theorem}
\label{homotopyU}
We have $\pi_{2k}(U(D^{\Z/p\Z}))=0$ and
\[\pi_{2k+1}(U(D^{\Z/p\Z})) \cong R(\Z/p\Z)[p_{V}^{-1}] \cong K_{0}^{\Z/p\Z}(D)\]
for $k \in \N_{0}$.
\end{theorem}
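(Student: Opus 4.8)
The plan is to reduce the computation to the finite-dimensional matrix blocks of the AF-algebra $D^{\Z/p\Z}$ and then invoke Bott periodicity for the classical unitary groups. From the proof of Lemma~\ref{kt_fixed} we have a unital inductive system
\[
    A_n = \bigl(\Endo{V}^{\otimes n}\bigr)^{\Z/p\Z} \cong \bigoplus_{i} M_{n_i^{(n)}}(\C)
\]
with at most $p$ summands and injective connecting $*$-homomorphisms $T \mapsto T \otimes 1$, whose colimit is $D^{\Z/p\Z}$. First I would record the standard fact that $\bigcup_n U(A_n)$ is dense in $U(D^{\Z/p\Z})$: any $u \in U(D^{\Z/p\Z})$ is a norm-limit of elements $a \in \bigcup_n A_n$, and once $\|a-u\|<1$ the polar part $a(a^*a)^{-1/2}$ lies in $U(A_n)$. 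Since $S^m$ (and $S^m \times [0,1]$) is compact, every class in $\pi_m(U(D^{\Z/p\Z}))$ and every null-homotopy of such a class can be pushed into a single $U(A_n)$, so
\[
    \pi_m\bigl(U(D^{\Z/p\Z})\bigr) \cong \colim_n \pi_m\bigl(U(A_n)\bigr) = \colim_n \prod_i \pi_m\bigl(U(n_i^{(n)})\bigr)\ .
\]

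Next I would control the block sizes. The size of the block of $A_n$ indexed by an irreducible $W$ is the multiplicity of $W$ in $V^{\otimes n}$, i.e.\ the coefficient of the character of $W$ in $p_V^n$, as in the proof of Lemma~\ref{kt_fixed}. By Lemma~\ref{coefficients} there is an $N$ with all coefficients of $p_V^N$ at least $2$; convolving with $p_V^n$ (whose coefficients are non-negative and sum to $(\dim V)^n$) then shows that every coefficient of $p_V^{n+N}$ is at least $2(\dim V)^n$, so $\min_i n_i^{(n)} \to \infty$. Hence for fixed $m$ and all large $n$ every factor $U(n_i^{(n)})$ lies in the stable range, where $\pi_m(U(n_i^{(n)})) \cong \pi_m(U)$; by Bott periodicity this is $0$ for even $m$, giving $\pi_{2k}(U(D^{\Z/p\Z})) = 0$ immediately.

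For the odd case, put $m = 2k+1$. For large $n$ both $\pi_{2k+1}(U(A_n)) \cong \bigoplus_i \pi_{2k+1}(U) \cong \Z^{r_n}$ and $K_0(A_n) \cong \Z^{r_n}$ are free abelian on the matrix blocks of $A_n$, and the crucial point is that a unital $*$-homomorphism $\bigoplus_i M_{a_i}(\C) \to \bigoplus_j M_{b_j}(\C)$ between algebras in the stable range induces on $\pi_{2k+1}(U(-))$ exactly its multiplicity matrix --- the same map it induces on $K_0$. This in turn reduces to the standard fact that the composite $U(\ell) \xrightarrow{u \mapsto u^{\oplus c}} U(c\ell) \hookrightarrow U(\ell')$ induces multiplication by $c$ on $\pi_{2k+1}$ in the stable range, a consequence of Bott periodicity and additivity of $K$-classes under direct sums. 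Granting it --- and fixing the identifications $\pi_{2k+1}(U(n_i^{(n)})) \cong \pi_{2k+1}(U) = \Z$ compatibly with the stabilisation inclusions --- the inductive systems $\{\pi_{2k+1}(U(A_n))\}$ and $\{K_0(A_n)\}$ are isomorphic, whence
\[
    \pi_{2k+1}\bigl(U(D^{\Z/p\Z})\bigr) \cong \colim_n K_0(A_n) = K_0(D^{\Z/p\Z}) \cong R(\Z/p\Z)[p_V^{-1}] \cong K_0^{\Z/p\Z}(D)
\]
by Lemma~\ref{kt_fixed} and continuity of $K_0$. The main obstacle is precisely this last comparison --- matching the maps induced on odd homotopy groups of the unitary groups with those induced on $K_0$; the divergence of the block sizes and the vanishing in even degrees are routine bookkeeping once the inductive picture is in place.
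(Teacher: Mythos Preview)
Your proposal is correct and follows essentially the same approach as the paper: reduce to the finite-dimensional matrix blocks via the AF structure, use Lemma~\ref{coefficients} to force all blocks into the stable range, and identify the connecting maps on stable $\pi_{2k+1}$ with the multiplicity matrix (i.e.\ multiplication by $p_V$ on $R(\Z/p\Z)$). The only cosmetic differences are that the paper replaces $V$ by $V^{\otimes N}$ upfront rather than tracking block growth via convolution, and identifies each stable term directly with $R(\Z/p\Z)$ rather than routing the comparison through $K_0(A_n)$.
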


\begin{proof}
The proof works similarly as in \cite[Thm. 3.7]{paper:EP-Circle}. Let $n \in \N_{0}$. The group $\pi_{n}(U(D^{\Z/p\Z}))$ is the direct limit of the sequence
\[\cdots \rightarrow \pi_{n}(U(\End(V^{\otimes k})^{\Z/p\Z}) \rightarrow \pi_{n}(U(\End(V^{\otimes k+1}))^{\Z/p\Z}) \rightarrow \cdots\]
where the connecting homomorphism is induced by the map $T \mapsto T \otimes 1$.\\
Consider the representing polynomial for $V$
\[p_{V}= \sum_{i=0}^{p-1} a_{i}t^{i}\ .\] 
The coefficients $a_{i}$ can be interpreted as the multiplicities of the character subspaces $V_{i}$ of $V$, i.e., $a_{i}=\dim(V_{i})$. Hence, if 
\[p_{V}^{k}= \sum_{j=0}^{p-1} b_{j}t^{j}\ ,\] 
then by Schur's lemma (compare Lemma \ref{kt_fixed})
\[\End(V^{\otimes k})^{\Z/p\Z} \cong \bigoplus_{j=0}^{p-1} M_{b_j}(\C) \otimes \End(W_{j})\ ,\]
where $W_{j} \subset V^{\otimes k}$ are the irreducible subrepresentations, which appear with multiplicity $b_{j}$. Hence,
\[U(\End(V^{\otimes k})^{\Z/p\Z}) \cong \prod_{j=0}^{p-1} U(b_{j})\ .\]
By Lemma \ref{coefficients} there exists $N \in \N$ such that $p_{V}^{N}$ has all coefficients $\ge$ 2. Since the sequence
\[\End(V^{\otimes 0})^{\Z/p\Z} \rightarrow \End(V^{\otimes N})^{\Z/p\Z} \rightarrow \cdots \rightarrow \End(V^{\otimes mN})^{\Z/p\Z} \rightarrow \cdots\]
also has direct limit $D^{\Z/p\Z}$, we can work with $V^{\otimes N}$ instead of $V$ and assume without loss of generality that $p_{V}$ has all coefficients $\ge 2$.
It is easily seen that the coefficients $b_{j}$ of $p_{V}^{k}$ then satisfy $b_{j} > k$ for $k \in \N$. This implies that for $k > \frac{n}{2}$ the unitary groups $U(b_{j})$ will all have dimension $b_{j} > \frac{n}{2}$. Now let $U_{\infty}$ be the colimit over the inclusions $U(n) \hookrightarrow U(n+1)$ that add a $1$ in the lower right corner. In the above situation $n$ falls into the stable range for the homotopy groups, which implies that $\pi_{n}(U(b_{j})) \cong \pi_{n}(U_{\infty})$, which is isomorphic to $\Z$ if $n$ is odd and vanishes if $n$ is even. 

Therefore for $k > \frac{n}{2}$ each term in the sequence which computes the group $\pi_{n}(U(D^{\Z/p\Z}))$ is isomorphic to the direct sum of one copy of $\Z$ for every irreducible $\Z/p\Z$-representation, i.e., is isomorphic to the representation ring $R(\Z/p\Z)$ as $R(\Z/p\Z)$-modules. Since the connecting homomorphism is given by multiplication by $p_{V}$, the result follows after taking the direct limit. 
\end{proof}

\begin{remark}
\label{K1}
The proof of Theorem \ref{homotopyU} shows in particular that the natural map
\[\pi_{n}(U(D^{\Z/p\Z})) \rightarrow K_{1}(C(S^n,D^{\Z/p\Z}))\]
given by mapping the class of $\gamma: S^{n} \rightarrow U(D^{\Z/p\Z})$ to the class of the corresponding unitary $u_{\gamma} \in U(C(S^n,D^{\Z/p\Z}))$ is an isomorphism. 
\end{remark}

\begin{corollary} \label{cor:pikAut}
    We have 
    \[
        \pi_n(\eqAut{\Z/p\Z}{D \otimes \bK}) \cong \begin{cases}
            GL_1(R(\Z/p\Z)[p_V^{-1}])_+ & \text{if } n = 0, \\
            R(\Z/p\Z)[p_V^{-1}] &\text{if } n > 0 \text{ and } n \text{ is even},\\
            0 & \text{if } n \text{ is odd}. 
        \end{cases}
    \]
\end{corollary}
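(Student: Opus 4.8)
The plan is to treat $n = 0$ separately from $n \ge 1$ and then simply assemble the results already proved. For $n = 0$, Lemma~\ref{units+} with $H = \Z/p\Z$ gives a group isomorphism $\pi_0(\eqAut{\Z/p\Z}{D \otimes \bK}) \cong GL_1(K_0^{\Z/p\Z}(D))_+$, and Lemma~\ref{kt_fixed} identifies $K_0^{\Z/p\Z}(D)$ with $R(\Z/p\Z)[p_V^{-1}]$ as ordered rings, so that the positive units correspond and we obtain the first line of the stated formula.

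For $n \ge 1$ I would proceed as follows. The higher homotopy groups of a topological group only depend on its identity component, so $\pi_n(\eqAut{\Z/p\Z}{D \otimes \bK}) \cong \pi_n(\eqAutId{\Z/p\Z}{D \otimes \bK})$ for $n \ge 1$. Proposition~\ref{AutProj} (with $H = \Z/p\Z$) gives $\eqAutId{\Z/p\Z}{D \otimes \bK} \simeq \Proj{1 \otimes e}{(D \otimes \bK)^{\Z/p\Z}}$, and Proposition~\ref{ProjBU} identifies the latter with $BU(D^{\Z/p\Z})$. Since $U(D^{\Z/p\Z})$ is a topological group, $\pi_n(BU(D^{\Z/p\Z})) \cong \pi_{n-1}(U(D^{\Z/p\Z}))$ for all $n \ge 1$. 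Finally Theorem~\ref{homotopyU} computes this last group: it vanishes when $n-1$ is even, i.e.\ when $n$ is odd, and equals $R(\Z/p\Z)[p_V^{-1}] \cong K_0^{\Z/p\Z}(D)$ when $n-1$ is odd, i.e.\ when $n$ is even. Comparing parities produces the remaining two lines of the formula.

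There is no genuine obstacle here: the corollary is essentially a bookkeeping consequence of the preceding four results. The only point that demands attention is the degree shift introduced by passing through the classifying space $BU(D^{\Z/p\Z})$, so one must make sure the parity of $n$ is matched to the odd homotopy groups of $U(D^{\Z/p\Z})$ correctly (in particular the case $n=1$, where $\pi_0(U(D^{\Z/p\Z})) = 0$ since $U(D^{\Z/p\Z})$ is a colimit of finite products of connected unitary groups). One should also note that the $\pi_0$ computation of the first paragraph is compatible with the $n \ge 1$ computation of the second, both describing $\eqAut{\Z/p\Z}{D \otimes \bK}$ itself; this is automatic since Proposition~\ref{AutProj} concerns precisely the identity component of this group in the point-norm topology.
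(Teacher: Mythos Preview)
Your proposal is correct and follows exactly the approach the paper takes: the paper's own proof simply says the result follows by combining Lemma~\ref{units+}, Proposition~\ref{ProjBU}, Proposition~\ref{AutProj}, and Theorem~\ref{homotopyU}, and you have spelled out precisely how these fit together (with the harmless addition of citing Lemma~\ref{kt_fixed} for the identification $K_0^{\Z/p\Z}(D) \cong R(\Z/p\Z)[p_V^{-1}]$, which is already established in the discussion preceding Lemma~\ref{units+}).
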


\begin{proof}
This follows by combining Lemma~\ref{units+}, Proposition~\ref{ProjBU}, Proposition~\ref{AutProj}, and Theorem~\ref{homotopyU}.
\end{proof}

\section{$\Z/p\Z$-equivariant $\Aut{D \otimes \bK}$-bundles and units of $KU^D$}
\label{final_section}

\subsection{The classifying space of equivariant $\Aut{D \otimes \bK}$-bundles}
\label{final_1}
In this section we will take a closer look at equivariant bundles of $C^*$-algebras with fibre given by the $G$-$C^*$-algebra $D \otimes \bK$. We will assume that they are locally trivial in the following sense:
\begin{definition}
    Let $G$ be a finite group. Let $A$ be a $G$-$C^*$-algebra and denote the action by $\alpha$. Let $X$ be a topological space and let $\pi \colon \cA \to X$ be a locally trivial $C^*$-algebra bundle with fibre $A$ over $X$. We will say that $\cA$ is a \emph{locally trivial $(G,A,\alpha)$-bundle} (or a \emph{locally trivial $(G,A)$-bundle} if the action is clear) if 
    \begin{enumerate}[a)]
        \item $G$ acts from the left on the total space $\cA$ in such a way that $\pi$ is $G$-equivariant,
        \item for every $x \in X$ there exists a $G$-invariant open neighbourhood $U \subseteq X$ and a $G$-equivariant homeomorphism 
        \[
            \varphi_U \colon U \times A \to \left.\cA\right|_U\ ,
        \]
        where $\left.\cA\right|_U = \pi^{-1}(U)$ and $G$ acts on $U \times A$ via $g \cdot (x,a) = (g \cdot x, \alpha_g(a))$.
    \end{enumerate}
\end{definition}

To save space we will drop the adjective ``locally trivial'' from the notation in the following and instead mention the cases explicitly where we do not assume it. By elementary bundle theory each $C^*$-algebra bundle $\cA \to X$ with fibre $A$ gives rise to a principal $\Aut{A}$-bundle $\cP \to X$ such that there is a bundle isomorphism
\(
    \cA \cong \cP \times_{\Aut{A}} A
\). In fact, this construction induces a bijection between isomorphism classes of $C^*$-algebra bundles with fibre $A$ and principal $\Aut{A}$-bundles. 

We outline a few details of the construction of $\cP$: Let $\cA_x = \pi^{-1}(\{x\})$. The fibre $\cP_x$ of $\cP$ over $x \in X$ is given by
\begin{equation} \label{eqn:Px}
    \cP_x = \text{Iso}(A, \cA_x)\ ,
\end{equation}
which has a canonical right $\Aut{A}$-action, and the topology on $\cP$ is fixed by the property that a local trivialisation $\varphi_U \colon U \times A \to \left.\cA\right|_U$ over $U$ gives rise to a homeomorphism
\[
    U \times \Aut{A} \to \left.\cP\right|_U  \ .
\]
Now assume that $\cA \to X$ is a $(G,A)$-bundle. We will see that the associated principal $\Aut{A}$-bundle is equivariant in the following sense:
\begin{definition} \label{def:GAut-bundle}
    Let $G$ be a finite group, let $X$ be a topological space and let $A$ be a $G$-$C^*$-algebra. A $(G,\Aut{A})$-bundle is a locally trivial principal $\Aut{A}$-bundle $\cP \to X$ together with a left $G$-action by bundle maps such that each $x \in X$ has a $G$-invariant trivialising neighbourhood $U \subseteq X$ and a $G$-equivariant trivialisation
    \[
        \left.\cP\right|_U \to U \times \Aut{A}
    \]
    where $G$ acts from the left on $\Aut{A}$ via the action on $A$ and diagonally on $U \times \Aut{A}$.
\end{definition}

Note that $(G,\Aut{A}$)-bundles are locally trivial equivariant bundles in the sense of \cite[Def.~2.3]{paper:tomDieck} for the local object $\Aut{A} \to \ast$.

\begin{lemma}
    Let $G$ be a finite group and let $A$ be a $G$-$C^*$-algebra. The principal $\Aut{A}$-bundle associated to a $(G,A)$-bundle is a $(G,\Aut{A})$-bundle. Conversely, given a $(G,\Aut{A})$-bundle $\cP \to X$, then the associated $C^*$-algebra bundle is a $(G,A)$-bundle over $X$. 
\end{lemma}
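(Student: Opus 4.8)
The plan is to prove the two implications by tracking the $G$-action through the standard (non-equivariant) correspondence between $C^*$-algebra bundles and principal $\Aut{A}$-bundles, and to check compatibility of actions on overlaps of equivariant trivialisations. First I would treat the forward direction. Suppose $\cA \to X$ is a $(G,A)$-bundle and let $\cP \to X$ be the associated principal $\Aut{A}$-bundle with fibre $\cP_x = \text{Iso}(A, \cA_x)$ as in \eqref{eqn:Px}. Since $G$ acts on the total space $\cA$ covering the $G$-action on $X$, for each $g \in G$ and each $x$ the restriction $g \colon \cA_x \to \cA_{gx}$ is a $*$-isomorphism. I would define a left $G$-action on $\cP$ by
\[
    g \cdot \Phi = (g|_{\cA_x}) \circ \Phi \circ \alpha_{g}^{-1} \colon A \to \cA_{gx}
\]
for $\Phi \in \cP_x = \text{Iso}(A,\cA_x)$. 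A quick check shows this is a well-defined left action: $(gh)\cdot \Phi = g\cdot(h\cdot\Phi)$ follows from functoriality of the $G$-action on $\cA$ and the cocycle identity $\alpha_{gh} = \alpha_g\alpha_h$. Moreover it is by bundle maps covering the $X$-action and it commutes with the right $\Aut{A}$-action, because $\alpha_g^{-1}$ is applied on the source side and the right action postcomposes a further automorphism on the source (so the two operations land in $\text{Iso}$ on different ``slots'' — more precisely $g\cdot(\Phi\cdot\psi) = (g|_{\cA_x})\circ\Phi\circ\psi\circ\alpha_g^{-1}$ while $(g\cdot\Phi)\cdot\psi' $ with $\psi' = \alpha_g\psi\alpha_g^{-1}$ gives the same thing, so the actions are compatible for the conjugated $\Aut{A}$-action which is exactly the $G$-action on $\Aut{A}$ in Definition~\ref{def:GAut-bundle}).

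Next I would verify the local triviality condition of Definition~\ref{def:GAut-bundle}. Given $x \in X$, pick a $G$-invariant open $U \ni x$ and an equivariant trivialisation $\varphi_U \colon U \times A \to \left.\cA\right|_U$ from the $(G,A)$-bundle structure. By construction of the topology on $\cP$, $\varphi_U$ induces a homeomorphism $\left.\cP\right|_U \cong U \times \Aut{A}$ sending $\Phi \in \cP_y$ (for $y \in U$) to $(y, \varphi_U(y,-)^{-1}\circ\Phi)$. I would compute the $G$-action in this chart: using $g\cdot\varphi_U(y,a) = \varphi_U(gy,\alpha_g(a))$ (equivariance of $\varphi_U$), the induced action on $U \times \Aut{A}$ comes out to $g\cdot(y,\theta) = (gy, \alpha_g\circ\theta\circ\alpha_g^{-1})$, which is precisely the diagonal action with $G$ acting on $\Aut{A}$ by conjugation through $\alpha$. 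Hence $\cP$ is a $(G,\Aut{A})$-bundle.

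For the converse, given a $(G,\Aut{A})$-bundle $\cP \to X$, form the associated bundle $\cA = \cP \times_{\Aut{A}} A$, with the quotient left $G$-action $g\cdot[\Phi,a] = [g\cdot\Phi, \alpha_g(a)]$; I would check this descends to the balanced tensor product because the $G$-action on $\cP$ is compatible with the conjugation action on $\Aut{A}$ and the action $\alpha$ on $A$, namely $[g\cdot(\Phi\cdot\psi), \alpha_g(a)] = [(g\cdot\Phi)\cdot(\alpha_g\psi\alpha_g^{-1}), \alpha_g(a)] = [g\cdot\Phi, \alpha_g(\psi a)]$. The fibrewise $C^*$-structure on $\cA$ is $G$-equivariant since each $\alpha_g$ is a $*$-isomorphism. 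Finally, transporting a $G$-equivariant local trivialisation $\left.\cP\right|_U \cong U \times \Aut{A}$ through the associated-bundle construction yields $\left.\cA\right|_U \cong U \times (\Aut{A}\times_{\Aut{A}} A) \cong U \times A$, and chasing the diagonal $G$-action on $U\times\Aut{A}$ through the balanced product gives exactly $g\cdot(y,a) = (gy,\alpha_g(a))$, so $\cA$ is a $(G,A)$-bundle. The main obstacle is bookkeeping: making sure the left $G$-action and the right $\Aut{A}$-action on $\cP$ are compatible in the precise sense required (the $G$-action intertwines the right action with its conjugate), and that ``$G$ acts on $\Aut{A}$'' in Definition~\ref{def:GAut-bundle} is interpreted consistently on both sides — once the conventions are pinned down, everything else is a routine verification that the two constructions are mutually inverse and equivariant.
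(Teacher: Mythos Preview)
Your proof is correct and follows the same overall strategy as the paper: transport the $G$-action through the associated-bundle correspondence and check that equivariant local trivialisations are carried to equivariant local trivialisations in both directions.

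The one substantive difference is your formula for the $G$-action on $\cP$. The paper defines it simply by \emph{post-composition}, $g \cdot \Phi = (g|_{\cA_x}) \circ \Phi$, without your extra $\alpha_g^{-1}$ on the source side; in the induced chart $\psi_U(y,\beta)=\varphi_{U,y}\circ\beta$ this gives $g \cdot (y,\beta) = (gy,\ \alpha_g \circ \beta)$, i.e.\ $G$ acts on the fibre $\Aut{A}$ by left translation rather than by conjugation. This is how ``$G$ acts from the left on $\Aut{A}$ via the action on $A$'' in Definition~\ref{def:GAut-bundle} is meant, and it makes the $G$-action commute \emph{strictly} with the right $\Aut{A}$-action (so $G$ acts by honest principal-bundle maps, as the definition requires). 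Your twisted version is the tom Dieck semidirect-product convention and is internally consistent, but it forces you to track the compatibility $g\cdot(\Phi\cdot\psi) = (g\cdot\Phi)\cdot(\alpha_g\psi\alpha_g^{-1})$ --- exactly the bookkeeping you flag at the end. The paper's choice sidesteps this entirely. For the converse direction the paper is even terser: it just writes $\varphi_U(y,a) = [\psi_U(y,\id{}),a]$ and notes that it is $G$-equivariant, which is precisely what your balanced-product computation verifies in more detail.
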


\begin{proof}
The action of $G$ on $\cA$ induces a corresponding action on $\cP$ by post-composition (see \eqref{eqn:Px}) such that $G$ acts by bundle maps. Fix $x \in X$. Choose a $G$-invariant trivialising neighbourhood $U \subseteq X$ and a $G$-equivariant homeomorphism 
\[
    \varphi_U \colon U \times A \to \left.\cA\right|_U\ .
\]
The map $\varphi_U$ induces a $G$-equivariant trivialisation
\[
    \psi_U \colon U \times \Aut{A} \to \left.\cP\right|_U \quad , \quad (y, \beta) \mapsto \varphi_{U,y} \circ \beta\ .
\]
Conversely, assume that $\cP \to X$ is a $(G,\Aut{A})$-bundle. A $G$-equivariant trivialisation 
$\psi_U \colon U \times \Aut{A} \to \left.\cP\right|_U$ gives rise to a corresponding one for $\cA = \cP \times_{\Aut{A}} A$ defined by 
\[
    \varphi_U \colon U \times A \to \left.\cA\right|_U  \quad , \quad (y, a) \mapsto [\psi_U(x,\id{}),a] \ . \qedhere
\]
\end{proof} 

Following \cite{paper:tomDieck, book:MayClass} we will now construct a universal $(G,\Aut{A})$-bundle  $E\Aut{A} \to B\Aut{A}$ over the classifying space $B\Aut{A}$. Let
\[
    \cEG{n} = \Aut{A}^{n+1} \quad \text{ and } \quad \cBG{n} = \Aut{A}^n
\]
The face maps $d_i \colon \cEG{n} \to \cEG{n-1}$ are given by 
\[
    d_i(\beta_0, \dots, \beta_{n}) = (\beta_0, \dots, \beta_{i-1} \circ \beta_i, \dots,  \beta_{n}) \quad \text{ for } i > 0
\] 
and $d_0(\beta_0, \dots, \beta_{n}) = (\beta_1, \dots, \beta_{n})$. The degeneracy maps $s_i \colon \cEG{n} \to \cEG{n+1}$ are 
\[
    s_i(\beta_0, \dots, \beta_{n}) = (\beta_0, \dots, \beta_{i-1}, \id{}, \beta_i, \dots, \beta_{n})\ .
\]
The face and degeneracy maps for $\cBG{n}$ are defined similarly except for $d_n$, which drops the last component instead of composing. Both of these are simplicial $G$-spaces with respect to the action that is given by conjugation on the first $n$ factors of $\cEG{n}$ and $\cBG{n}$ and by the left action on the last factor of $\cEG{n}$. In the case of $\cEG{n}$ this is 
\[
    g \cdot (\beta_0, \dots, \beta_n) = (\alpha_g\beta_0\alpha_g^{-1}, \dots, \alpha_g\beta_{n-1}\alpha_g^{-1}, \alpha_g\beta_n)\ .
\]
Note that this action is compatible with the structure maps and that the obvious projection map $\cEG{\bullet} \to \cBG{\bullet}$ onto the first $n$ factors is $G$-equivariant. 

The two simplicial spaces $\cEG{\bullet}$ and $\cBG{\bullet}$ agree with the bar constructions $B(\ast, \Aut{A}_c, \Aut{A}_l)$ and $B(\ast, \Aut{A}_c, \ast)$ in the category of $G$-spaces, respectively, where $\Aut{A}_c$ is $\Aut{A}$ equipped with the conjugation action and $\Aut{A}_l$ is the same space equipped with the left action. Alternatively, the space $B\Aut{A}$ can be also viewed as the geometric realisation of the topological category associated to the group $\Aut{A}$. Let 
\[
    E\Aut{A} = \lvert \cEG{\bullet} \rvert \quad , \quad
    B\Aut{A} = \lvert \cBG{\bullet} \rvert\ .
\]
The projection map $\cEG{\bullet} \to \cBG{\bullet}$ induces a continuous $G$-equivariant map $\pi \colon E\Aut{A} \to B\Aut{A}$ on the geometric realisation.

\begin{prop}
    Let $G = \Z/p\Z$. Let $V$ be a finite-di\-men\-sional unitary $G$-representation. Let $D = \Endo{V}^{\otimes \infty}$ and $\bK = \bK(\ell^2(G) \otimes H_0)$. Then 
    \[
        \pi \colon E\Aut{D \otimes \bK} \to B\Aut{D \otimes \bK}
    \]
    is a universal $(G,\Aut{D \otimes \bK})$-bundle. 
\end{prop}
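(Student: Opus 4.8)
The plan is to verify the two defining properties of a universal $(G,\Aut{D\otimes\bK})$-bundle in the sense of \cite{paper:tomDieck}: that $\pi$ is itself a $(G,\Aut{D\otimes\bK})$-bundle, and that its total space is $H$-equivariantly contractible for every subgroup $H\subseteq G$ (equivalently, that $E\Aut{D\otimes\bK}^H$ is contractible for $H\in\{\{e\},G\}$, which by tom Dieck's recognition theorem characterises universality among locally trivial equivariant bundles with local model $\Aut{D\otimes\bK}\to\ast$). Write $A=D\otimes\bK$ throughout.

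First I would check that $\pi$ is a $(G,\Aut{A})$-bundle. Since $\cEG{\bullet}\to\cBG{\bullet}$ is the map of bar constructions $B(\ast,\Aut{A}_c,\Aut{A}_l)\to B(\ast,\Aut{A}_c,\ast)$, on geometric realisations this is the standard simplicial model for the universal principal $\Aut{A}$-bundle; local triviality over the open cover of $B\Aut{A}$ by the images of the ``last-coordinate'' charts is classical (see \cite{book:MayClass}), and one only has to observe that these charts can be chosen $G$-invariantly and that the resulting trivialisations $\left.\cP\right|_U\to U\times\Aut{A}$ intertwine the conjugation $G$-action on the fibre with the given $G$-action, which is immediate from the explicit formula for the $G$-action on $\cEG{\bullet}$. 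Here I would invoke that $\Aut{A}$ is equivariantly well-pointed (Lemma~\ref{lem:G-cofib}) so that the geometric realisation is well behaved and the simplicial $G$-space is proper, ensuring $|\cEG{\bullet}|\to|\cBG{\bullet}|$ is a genuine fibre bundle and not merely a quasifibration.

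The heart of the argument is the contractibility of $E\Aut{A}^H$ for each subgroup $H$. The key point is that geometric realisation commutes with taking $H$-fixed points of a proper simplicial $G$-space, so $E\Aut{A}^H\simeq |\, n\mapsto (\Aut{A}^{n+1})^H\,|$. Now the $H$-fixed points of $\cEG{n}=\Aut{A}^{n+1}$ with its conjugation-on-the-first-$n$-factors, left-on-the-last-factor action are precisely $(\eqAut{H}{A})^{n}\times(\text{something})$ — more carefully, fixing the conjugation action picks out $\eqAut{H}{A}$ in each of the first $n$ slots while fixing the left translation action on the last slot would be empty unless we are careful; the correct statement is that $(\cEG{n})^H$ is the nerve-level $n$-simplices of $B(\ast,\eqAut{H}{A},\eqAut{H}{A})$, i.e.\ $\cEG{\bullet}^H\cong E\eqAut{H}{A}_\bullet$ as simplicial spaces, because an equivariant homeomorphism that is fixed under the left action by the subgroup acting through $\alpha$ on $A$ is the same as an $H$-equivariant automorphism. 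Hence $E\Aut{A}^H\cong E(\eqAut{H}{A})$, the total space of the \emph{non-equivariant} universal bundle for the topological group $\eqAut{H}{A}$, which is contractible by the standard simplicial contraction (the extra-degeneracy / simplicial homotopy $s_{-1}$ on the bar construction $B(\ast,\eqAut{H}{A},\eqAut{H}{A})$).

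The step I expect to be the main obstacle is the clean identification $\cEG{\bullet}^H\cong E(\eqAut{H}{A})_\bullet$ together with the commutation of $(-)^H$ with geometric realisation: one must confirm that $\cEG{\bullet}$ is a \emph{proper} (equivalently, Reedy cofibrant enough) simplicial $G$-space, which again reduces to $\Aut{A}$ being equivariantly well-pointed via Lemma~\ref{lem:G-cofib}, and one must be a little careful that the point-norm topology on $\eqAut{H}{A}$ agrees with the subspace topology it inherits as $(\Aut{A})^H\subseteq\Aut{A}$ — this follows from the fact that taking $H$-fixed points is a closed embedding. Once these topological subtleties are settled, the contraction is formal and tom Dieck's classification theorem (or \cite[Thm.~2.14]{paper:tomDieck}) upgrades the fixed-point contractibility to the universal property, completing the proof.
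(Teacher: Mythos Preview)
There is a genuine gap in your computation of $(\cEG{n})^H$. You claim that ``an equivariant homeomorphism that is fixed under the left action by the subgroup acting through $\alpha$ on $A$ is the same as an $H$-equivariant automorphism.'' That is false: being fixed under the left action $\beta \mapsto \alpha_h \circ \beta$ means $\alpha_h \circ \beta = \beta$ for all $h \in H$, and since $\beta$ is invertible this forces $\alpha_h = \id{A}$. So the ordinary $H$-fixed points of the last factor of $\cEG{n}$ are \emph{empty} whenever $\alpha|_H$ is nontrivial (which is the interesting case $H = \Z/p\Z$), and hence $E\Aut{A}^H = \emptyset$, not $E\eqAut{H}{A}$. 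Consequently your stated universality criterion --- that $E\Aut{A}^H$ be contractible for every $H$ --- is not the right one for bundles whose local model carries a nontrivial $G$-action on the structure group.

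The paper's proof avoids this by following tom Dieck's criterion literally: one must show that for the local object $X = \Aut{A}_l \to \ast$ the \emph{associated bundle} $(E\Aut{A} \times X)/\Aut{A}$ is $G$-shrinkable. That associated bundle is homeomorphic to $E\Aut{A}$ as a space but carries the \emph{conjugation} action on all $n+1$ factors, i.e.\ it is $B(\ast, \Aut{A}_c, \Aut{A}_c)$. The $H$-fixed points of this object are genuinely $E\eqAut{H}{A}$ level-wise, and the standard extra degeneracy gives contractibility. Equivalently, what tom Dieck's theorem actually requires is contractibility of the twisted fixed-point set $\{e \in E : g\cdot e = e\cdot\alpha_g\ \forall g\in H\}$, and that set, not the naive $E^H$, is what realises to $E\eqAut{H}{A}$. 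Your argument is salvageable once this correction is made, but as written the key identification is wrong.

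A second, smaller gap: the tom Dieck criterion asks for $G$-\emph{shrinkability}, not just that each fixed-point space be weakly contractible. The paper closes this by invoking that $\Aut{D\otimes\bK}$ has the $G$-homotopy type of a $G$-CW-complex (Lemma~\ref{lem:GCW-complex}), so that weak $G$-contractibility can be upgraded to an honest $G$-equivariant contraction. You should include this step.
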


\begin{proof}
The inclusion $\{\id{}\} \to \Aut{D \otimes \bK}$ is a $G$-equivariant cofibration by Lemma~\ref{lem:G-cofib}. By \cite[Thm.~8.2]{book:MayClass} the map $\pi \colon E\Aut{D \otimes \bK} \to B\Aut{D \otimes \bK}$ is thus a principal $\Aut{D \otimes \bK}$-bundle. The construction of the local trivialisations in \cite[Thm.~8.2]{book:MayClass} is based on the observation that the $n$th simplicial filtration step included in the full space $E\Aut{D \otimes \bK}$ is a cofibration (see also \cite[Thm.~7.6]{book:MayClass}). Since this is true $G$-equivariantly here, the trivialisation described at the end of the proof of \cite[Thm.~8.2]{book:MayClass} is in fact $G$-equivariant in the sense of Definition ~\ref{def:GAut-bundle}. Therefore $E\Aut{D\otimes \bK} \to B\Aut{D \otimes \bK}$ is a $(G,\Aut{D \otimes \bK})$-bundle. 

To see that it is universal we may follow \cite[Thm.~5.1]{paper:tomDieck}: Recall that a local object $X \to G/H$ is a $(G,\Aut{A})$-bundle over the $G$-space~$G/H$ for a subgroup $H \subseteq G$. We have to show that for every local object $X \to G/H$ featuring in our local triviality condition the associated bundle 
\begin{equation} \label{eqn:EAutX}
    (E\Aut{D \otimes \bK} \times X)/\Aut{D \otimes \bK} \to G/H
\end{equation}
is $G$-shrinkable. In our case there is only one local object to check, namely $X = \Aut{D \otimes \bK} \to \ast = G/G$, i.e.\ the right $\Aut{D\otimes \bK}$-bundle over the point with the given left action of $G$ on $\Aut{D \otimes \bK}$. In this case the bundle in \eqref{eqn:EAutX} is homeomorphic to $E\Aut{D \otimes \bK} \to \ast$ with the $G$-action given by conjugation, i.e.\ the $G$-space given by $B(\ast,\Aut{D \otimes \bK}_c, \Aut{D \otimes \bK}_c)$. Let $H \subseteq G$ be a subgroup. Taking fixed points is a finite limit and therefore commutes with geometric realisation. Combining these observations we have
\[
    E\Aut{D \otimes \bK}^H \cong E\eqAut{H}{D \otimes \bK}\ ,
\]
which is contractible. Hence, $E\Aut{D \otimes \bK}$ is weakly $G$-contractible and the result follows from Lemma ~\ref{lem:GCW-complex} and \cite[Prop.~13.2]{paper:Murayama}.
\end{proof}

\begin{remark}
    The proof of universality shows that $E\Aut{D \otimes \bK}$ is in fact a model for $E_\mathcal{F(R)}\Aut{D \otimes \bK}$ for the family $\mathcal{R}$ of local representations in the sense of \cite[Def.~3.4]{paper:LueckUribe} that consists of all conjugates of the action $\alpha \colon G \to \Aut{D \otimes \bK}$ and the trivial homomorphism $\{\id{}\} \to \Aut{D \otimes \bK}$ (see \cite[Thm.~11.5]{paper:LueckUribe}). Note, however, that the equivariant principal bundles considered in \cite{paper:LueckUribe} are less restrictive than our definition. In particular, \cite[Def.~2.1]{paper:LueckUribe} only demands that local triviality holds non-equivariantly, whereas Definition~\ref{def:GAut-bundle} asks for $G$-equivariant local trivialisations (as in \cite{paper:tomDieck}). With the more flexible notion, Condition~(H) (see \cite[Def.~6.1]{paper:LueckUribe}) is needed to ensure homotopy invariance. It is not clear to us, if Condition~(H) holds in our case.
\end{remark}

\noindent Note that the $G$-space $\Aut{D \otimes \bK}$ has two equivariant deloopings: 
\begin{enumerate}
    \item The classifying space $B\Aut{D \otimes \bK}$ equipped with the $G$-action induced by the conjugation action of $G$ on $\Aut{D \otimes \bK}$ defined above. 
    \item The $\Gamma$-$G$-space delooping given by 
    \[
        B_\otimes\Aut{D\otimes \bK} = (EH_\cI\cG_D)_1
    \]
    induced by the tensor product structure on automorphisms.
\end{enumerate}

\begin{lemma} \label{lem:comp_vs_tensor}
Let $G=\Z/p\Z$ and let $D$ and $\bK$ be the $G$-$C^*$-algebras defined above. Then there is a weak $G$-equivalence
\[
    B\Aut{D\otimes \bK} \simeq B_\otimes\Aut{D \otimes \bK}\ .
\]
As a consequence, $B_\otimes\Aut{D \otimes \bK}$ is a classifying space for $G$-equivariant $D \otimes \bK$-bundles over finite CW-complexes.
\end{lemma}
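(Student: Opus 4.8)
The plan is to compare the two deloopings by exhibiting a zig‑zag of weak $G$-equivalences between them, using the homotopy type computations already available. The essential input is that both $B\Aut{D \otimes \bK}$ and $B_\otimes\Aut{D \otimes \bK}$ are connected $G$-spaces (in the sense that all fixed‑point spaces are connected), and that on $H$-fixed points for each subgroup $H \subseteq G$ both deloop the same topological group $\Aut{D \otimes \bK}$, whose $H$-fixed points we have identified via Corollary~\ref{cor:pikAut} and Proposition~\ref{AutProj}. Concretely, I would first observe that $\Aut{D \otimes \bK}$ is a grouplike commutative $\cI$-$G$-monoid: by Lemma~\ref{GD_stable} it is a stable commutative $\cI$-$\Z/p\Z$-monoid with compatible inverses, and $\pi_0((\cG_D(\mathbf{1}))^H) \cong GL_1(K_0^H(D))_+$ is a group by Lemma~\ref{units+}. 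Hence the group completion statement from Section~\ref{prel_1} (the first structure map of $EH_\cI X$ is a group completion, and here it is already an equivalence because $\pi_0$ is a group) gives that $B_\otimes\Aut{D\otimes \bK} = (EH_\cI \cG_D)_1$ has $H$-fixed points weakly equivalent to $B(\Aut{D\otimes\bK}^H)$, the ordinary bar‑construction classifying space of the fixed‑point group.

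Next I would compare this with $B\Aut{D \otimes \bK}$: since taking $H$-fixed points commutes with the geometric realisation defining $B\Aut{D\otimes \bK}$ (fixed points are a finite limit, exactly as used in the proof of the universality proposition), we have $B\Aut{D\otimes \bK}^H \cong B(\AutSt{H}{D\otimes\bK}) = B(\Aut{D \otimes \bK}^H)$ as well. So on each fixed‑point space the two deloopings literally compute the classifying space of the same monoid. The remaining task is to produce an actual map of $G$-spaces realising this fixed‑point comparison and check it is an $H$-equivalence for all $H$. For this I would use the universal property established in the preceding proposition: $B\Aut{D\otimes\bK}$ carries the universal $(G,\Aut{D\otimes\bK})$-bundle, so a $G$-map $B_\otimes\Aut{D\otimes\bK} \to B\Aut{D\otimes\bK}$ is the same as a $(G,\Aut{D\otimes\bK})$-bundle over $B_\otimes\Aut{D\otimes\bK}$; alternatively, and more symmetrically, I would work at the level of $O_G$-spaces via Elmendorf's theorem, where both deloopings are bar constructions on the same group‑valued $O_G$-presheaf $G/H \mapsto \Aut{D\otimes\bK}^H$, and the comparison map is then the standard one between the Segal‑machine delooping $\mathbb{B}$ and the ordinary bar construction $B$ of a topological group (the same comparison that appears non‑equivariantly in the work of Segal and of May). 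On $\pi_0$ both are trivial (the spaces are $H$-connected since $\Aut{D\otimes\bK}^H$ is a group), and on $\pi_n$ for $n \ge 1$ both sides agree by the $\pi_0$-is-a-group condition making the group completion an equivalence, so the comparison is an $H$-equivalence for every $H$, hence a weak $G$-equivalence.

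The last sentence of the statement is then immediate: the preceding proposition shows $B\Aut{D\otimes\bK}$ classifies $(G,\Aut{D\otimes\bK})$-bundles, equivalently $(G, D\otimes\bK)$-bundles, over finite CW-complexes, and a weak $G$-equivalence induces a bijection on $[X, -]^G$ for $X$ a finite $G$-CW-complex (using that both sides have the $G$-homotopy type of a $G$-CW-complex, which for $B\Aut{D\otimes\bK}$ follows from Lemma~\ref{lem:GCW-complex} and the bar construction, and for $B_\otimes\Aut{D\otimes\bK}$ from the cofibrancy built into the Segal machine). Transporting the classification along this bijection gives the claim.

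\textbf{Main obstacle.} The delicate point is constructing the comparison map as a genuine $G$-map rather than just a levelwise/fixed‑pointwise equivalence, and checking it is $G$-equivariant on the nose. The two deloopings are built by formally different machines — one by iterated bar construction on a topological group with conjugation $G$-action, the other by the equivariant Segal machine $EH_\cI$ applied to the $\cI$-$G$-monoid $\cG_D$ — so one must either route everything through $O_G$-spaces (where Elmendorf's theorem lets one compare presheaves of bar constructions) and then apply $\Phi$, being careful that $\Phi$ and geometric realisation interact correctly, or else invoke the universal property of $B\Aut{D\otimes\bK}$ to define the map, which then requires verifying that the tautological $\Aut{D\otimes\bK}$-bundle over the Segal delooping is equivariantly locally trivial in the sense of Definition~\ref{def:GAut-bundle}. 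The cleanest route is probably the $O_G$-space one, since there both objects are manifestly bar constructions on the same object and the standard non‑equivariant comparison applies presheaf‑wise; I would spell that out, flagging that the $\pi_0$-group hypothesis (Lemma~\ref{units+}) is exactly what upgrades the group completion to an equivalence and closes the argument.
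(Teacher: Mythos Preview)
Your proposal has a genuine gap stemming from a confusion between the two monoid structures on $\Aut{D\otimes\bK}$. The delooping $B_\otimes\Aut{D\otimes\bK} = (EH_\cI\cG_D)_1$ is built from the \emph{tensor product} structure on the $\cI$-$G$-monoid $\cG_D$, whereas $B\Aut{D\otimes\bK}$ is the bar construction for the \emph{composition} structure. When you write that ``both deloopings are bar constructions on the same group-valued $O_G$-presheaf $G/H \mapsto \Aut{D\otimes\bK}^H$, and the comparison map is then the standard one between the Segal-machine delooping $\mathbb{B}$ and the ordinary bar construction $B$ of a topological group'', this is not correct: the standard Segal/May comparison relates two machines applied to the \emph{same} monoid structure, and would give $\mathbb{B}(\Aut{D\otimes\bK},\circ)\simeq B\Aut{D\otimes\bK}$, not $\mathbb{B}(\Aut{D\otimes\bK},\otimes)\simeq B\Aut{D\otimes\bK}$. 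So the map you propose does not actually compare the two deloopings in the statement.

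The missing ingredient is the Eckmann--Hilton argument, which is exactly how the paper proceeds. Because $\cG_D$ carries both $\mu=\otimes$ and $\nu=\circ$ satisfying the interchange condition (this is the ``compatible inverses'' setup of \cite[Def.~3.1, Thm.~3.6]{paper:DP-Units}), one can form a \emph{double} delooping $B_\otimes B\Aut{D\otimes\bK}$ in the category of $G$-spaces and obtain a genuine $G$-equivariant zig-zag
\[
B_\otimes\Aut{D\otimes\bK} \longrightarrow \Omega\,B_\otimes B\Aut{D\otimes\bK} \longleftarrow B\Aut{D\otimes\bK}.
\]
On $H$-fixed points (which commute with geometric realisation and loops) this reduces to the non-equivariant zig-zag of \cite[Thm.~3.6]{paper:DP-Units} applied to $\eqAut{H}{D\otimes\bK}$, where both maps are known to be weak equivalences. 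This intermediate double delooping is precisely what resolves the ``main obstacle'' you correctly identified but did not overcome: it supplies the honest $G$-maps linking the two different monoid structures. Your two suggested routes (Elmendorf plus a presheafwise comparison, or the universal bundle property) would each still need this Eckmann--Hilton step to produce the comparison in the first place.
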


\begin{proof}
    The proof is an equivariant version of \cite[Thm.~3.6]{paper:DP-Units} and can be reduced to that statement: With $\mu = \otimes$ induced by the tensor product and $\nu = \circ$ given by composition there is an intermediate space $B_\otimes B\Aut{D \otimes \bK}$ constructed in the proof of \cite[Thm.~3.6]{paper:DP-Units}. We can carry out this construction in the category of $G$-spaces. Then it suffices to check that for a subgroup $H \subseteq G$ the induced maps on fixed points
    \[
        \begin{tikzcd}
        B_\otimes\Aut{D\otimes \bK}^H \ar[r] & 
        \Omega B_\otimes B\Aut{D \otimes \bK}^H &
        B\Aut{D \otimes \bK}^H \ar[l]
    \end{tikzcd}    
    \]
    is a weak equivalence. But because fixed points commute with limits and the group action is by conjugation the above boils down to the following sequence of maps
    \[
    \begin{tikzcd}
        B_\otimes\!\eqAut{H}{D\otimes \bK} \ar[r,"\simeq"] & 
        \Omega B_\otimes B\!\eqAut{H}{D \otimes \bK} &
        B\!\eqAut{H}{D \otimes \bK} \ar[l,"\simeq" above]
    \end{tikzcd}\ ,
    \]
    all of which are weak equivalences by the original \cite[Thm.~3.6]{paper:DP-Units}.
\end{proof}

\subsection{Comparison between $\cG_D$ and $\textnormal{GL}^{\bullet}_1 KU^D$}
\label{final_2}

Let $e \in \bK$ be a projection of rank one fixed by $G$. We can naturally define a collection of maps
\[\theta^H_n\colon \eqAut{H}{(D \otimes \bK)^{\otimes n}} \rightarrow (\Omega^n KU^{D}_{n})^H=\!\hom_{\text{gr}}(\grS, C_{0}(\R^n) \otimes \C \ell_1 \otimes (D \otimes \bK)^H)\]
\[ \alpha \mapsto (f \mapsto \hat{\eta}_{n}(f) \otimes \alpha(1 \otimes e))\]
for all $H \subseteq G$. Since $\cG_D$ has compatible inverses (see Lemma \ref{GD_stable}), these maps factor over a morphism
\[\theta: \cG_D \rightarrow \text{GL}_1^{\bullet} KU^D\]
of commutative $\cI$-$O_G$-monoids. Corollary~\ref{cor:pikAut} suggests it might be worth to investigate whether $\theta$ is a weak equivalence when $G=\Z/p\Z$. This turns out to be true if one takes the order structure on $\pi_0(KU^D) \cong K_0(D)$ into account. Assuming that $\pi_0(R)$ has an order structure we define $GL_1^\bullet(R)_+$ by a pullback diagram analogous to the one in \eqref{eqn:GL1_diag} with $GL_1(\pi_0(R))$ replaced by $GL_1(\pi_0(R))_+$. Let $gl_1(R)_+ = EH_\cI GL_1^\bullet(R)_+$ be the associated $G$-spectrum.


\begin{theorem}\label{thm:main_thm}
There is a map of $\Z/p\Z$-spectra
\[ EH_{\cI}\Aut{D \otimes \bK} \rightarrow gl_{1}(KU^{D})\]
which is an isomorphism on all higher equivariant homotopy groups $\pi^{H}_{n}$ with $n >0$, and the inclusion $\text{GL}_1(K_{0}^{H}(D))_{+} \hookrightarrow \text{GL}_1(K_{0}^{H}(D))$ on $\pi_{0}^{H}$, for $H \subseteq \Z/p\Z$. In particular, we have an equivalence of $\Z/p\Z$-spectra
\[
EH_{\cI}\Aut{D \otimes \bK} \simeq gl_{1}(KU^{D})_+\ .
\]
\end{theorem}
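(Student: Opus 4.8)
The plan is to apply the equivariant Segal machine $EH_{\cI}$ to the morphism $\theta\colon \cG_D \to \textnormal{GL}_1^{\bullet}KU^D$ of commutative $\cI$-$O_G$-monoids constructed above, and then to analyse the result fixed point by fixed point. Both $\cG_D$ (Lemma~\ref{GD_stable}) and $\textnormal{GL}_1^{\bullet}KU^D$ (by the lemma showing the latter is stable) are stable, and their values at $\mathbf{1}$ have group $\pi_0$: indeed $\pi_0(\cG_D(\mathbf{1})(G/H)) \cong \textnormal{GL}_1(K_0^H(D))_+$ by Lemma~\ref{units+}, while $\pi_0(\textnormal{GL}_1^{\bullet}KU^D(\mathbf{1})(G/H)) \cong \textnormal{GL}_1(\pi_0^H(KU^D)) \cong \textnormal{GL}_1(K_0^H(D))$ by Lemma~\ref{+omega} and Theorem~\ref{KU}. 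Hence Lemma~\ref{deloop} reduces the whole statement -- separately in each homotopy degree $n$ and for each subgroup $H \subseteq \Z/p\Z$ -- to understanding the map on homotopy groups of the first spaces
\[
    \theta_\ast\colon \pi_n(\eqAut{H}{D\otimes\bK}) \longrightarrow \pi_n\bigl(\textnormal{GL}_1^{\bullet}KU^D(\mathbf{1})(G/H)\bigr)\ .
\]
For $n>0$ the target is $\pi_n$ of a union of path components of $(\Omega KU^D_1)^H$, hence equal to $\pi_n((\Omega KU^D_1)^H) \cong \pi_n^H(KU^D) \cong K_n^H(D)$ by Lemma~\ref{+omega} and Theorem~\ref{KU}; for $n=0$ it is $\textnormal{GL}_1(K_0^H(D))$.

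In degree $0$ I would unwind the definition of $\theta$: the map $\theta_\ast$ sends the class of $\alpha \in \eqAut{H}{D\otimes\bK}$ to the class of the invariant projection $\alpha(1_D\otimes e)$ in $K_0^H(D)$, that is, $\theta_\ast$ on $\pi_0$ is precisely the map $\kappa$ of Lemma~\ref{units+} followed by the inclusion $\textnormal{GL}_1(K_0^H(D))_+ \hookrightarrow \textnormal{GL}_1(K_0^H(D))$. Since $[\alpha(1_D\otimes e)]$ and its inverse lie in $K_0^H(D)_+$ (Lemma~\ref{units+}), the morphism $\theta$ factors through the sub-$\cI$-$O_G$-monoid $\textnormal{GL}_1^{\bullet}(KU^D)_+$, giving $\widetilde\theta\colon \cG_D \to \textnormal{GL}_1^{\bullet}(KU^D)_+$ which on $\pi_0$ is the isomorphism $\kappa$.

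For $n>0$ I would represent a class in $\pi_n(\eqAut{H}{D\otimes\bK}) \cong \pi_n(\eqAutId{H}{D\otimes\bK})$ via Proposition~\ref{AutProj} by a projection-valued map on $S^n$, i.e.\ by a finitely generated projective Hilbert $(D\otimes\bK)^H$-module bundle in the sense of the Swan correspondence recalled in Section~\ref{prel_2}; the morphism $\theta$ then tensors this bundle with the Bott element $\hat\eta_n$. Using Lemma~\ref{lem:htpy_sets} to identify the target with $K_n^H(D)$, one checks that under these identifications $\theta_\ast$ becomes the iterated index map of Example~\ref{indexmap}, which by construction of $\hat\eta_n$ is the Bott periodicity isomorphism: concretely, both sides vanish for $n$ odd by Corollary~\ref{cor:pikAut} and Theorem~\ref{KU}, and for $n$ even both equal $K_0^H(D)$ with $\theta_\ast$ the Bott map, hence an isomorphism. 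Assembling these, Lemma~\ref{deloop} shows that $EH_{\cI}\widetilde\theta\colon EH_{\cI}\Aut{D\otimes\bK} \to gl_1(KU^D)_+$ is an isomorphism on $\pi_n^H$ for all $n$ and all $H \subseteq \Z/p\Z$, so an equivalence of naive $\Z/p\Z$-spectra, while its composite with $gl_1(KU^D)_+ \to gl_1(KU^D)$ is an isomorphism on $\pi_n^H$ for $n>0$ and the inclusion of positive units $\textnormal{GL}_1(K_0^H(D))_+ \hookrightarrow \textnormal{GL}_1(K_0^H(D))$ on $\pi_0^H$.

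The principal difficulty is the identification in the third paragraph: showing that, compatibly with all the auxiliary equivalences (Proposition~\ref{AutProj}, the Swan correspondence, Lemma~\ref{lem:htpy_sets}, and Bott periodicity in equivariant $KK$-theory), ``tensoring with $\hat\eta_n$'' induces the Bott periodicity isomorphism. For $H=\{e\}$ this is essentially the non-equivariant computation underlying \cite[Thm.~1.1]{paper:DP-Units}; for $H=\Z/p\Z$ it is the adaptation, in the spirit of \cite{paper:EP-Circle}, carried out through the fixed-point algebra $D^{\Z/p\Z}$, whose $K$-theory and unitary group are controlled by Lemma~\ref{kt_fixed} and Theorem~\ref{homotopyU}. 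Once this square is seen to commute, the remainder of the argument is formal.
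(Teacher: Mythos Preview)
Your proposal is correct and follows essentially the same route as the paper: reduce via Lemma~\ref{deloop} to $\theta_1^H$, handle $\pi_0$ by Lemma~\ref{units+}, and for $n>0$ pass through $\Proj{1\otimes e}{(D\otimes\bK)^H}$ via Proposition~\ref{AutProj} and identify the remaining map with Bott periodicity using the Hilbert-module-bundle description of the index map (Example~\ref{indexmap}). The paper makes your ``principal difficulty'' explicit by writing down two commutative squares---one comparing $\Psi_*$ with a basepoint-corrected map $\beta\colon \pi_n(\Proj{1\otimes e}{(D\otimes\bK)^H}) \to K_0(C_0(S^n,*)\otimes D^H)$, and a second identifying $\beta$ with the suspension isomorphism via $\pi_{n-1}(U(D^H))$ using Remarks~\ref{RemProjBU} and~\ref{K1}---but these are exactly the compatibilities you flag.
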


\begin{proof}
By Lemma \ref{deloop} it is enough to check that the maps
\[ \theta_1^H: \eqAut{H}{D \otimes \bK} \rightarrow (\Omega KU_{1}^{D})^{H}\]
defined above enjoy the prescribed properties, for $H \in \{\{e\}, \Z/p\Z\}$.
Observe that $\theta_1^H$ factors through
\[\begin{tikzcd}
	\eqAut{H}{D \otimes \bK} && {\text{hom}_{\text{gr}}(\widehat{\mathcal{S}}, C_{0}(\R) \otimes \mathbb{C} \ell_1 \otimes (D \otimes \mathbb{K})^H)} \\
	{\text{Proj}((D \otimes \mathbb{K})^H)}
	\arrow["\theta_1^H", from=1-1, to=1-3]
	\arrow["\Phi"', from=1-1, to=2-1]
	\arrow["\Psi"', from=2-1, to=1-3]
\end{tikzcd}\]
where $\Psi(p)= (f \mapsto \eta_{1}(f) \otimes p)$. By Proposition \ref{AutProj} the map $\Phi$ is a homotopy equivalence when the domain is restricted to $\eqAutId{H}{D \otimes \bK}$ and the codomain to the component of $1 \otimes e$. Recall that $\hat{\eta}_1 \in \hom_{\text{gr}}(\grS, C_0(\R, \Cln{1}))$ is the Bott element. Note that $\Psi$ factors in turn as
\[
\begin{tikzcd}[column sep=0.5cm,font=\footnotesize]
\Proj{}{(D \otimes \bK)^H} \ar[r] & \hom_{\text{gr}}(\grS, (D \otimes \bK)^H) \ar[r] & \hom_{\text{gr}}(\grS, C_{0}(\R, \C \ell_1) \otimes (D \otimes \bK)^H).
\end{tikzcd}
\]

Here the first map sends a projection $p$ to $\epsilon \cdot
\left(\begin{smallmatrix}
p & 0\\
0 & 0
\end{smallmatrix}\right)$. The second map sends $\varphi$ to $\varphi \otimes \hat{\eta_1} \circ \Delta$ and shifts the grading to $C_{0}(\R) \otimes \C \ell_1$, and it is an isomorphism on $\pi_0$ (since it induces multiplication by the Bott element). Therefore the discussion after \cite[Thm. 4.7]{paper:Trout}, plus Lemma~\ref{units+} show that $\theta_1^H$ induces the inclusion $\text{GL}_1(K_{0}^{H}(D))_{+} \hookrightarrow \text{GL}_1(K_{0}^{H}(D))$ on $\pi_{0}$. 

To check $\pi_n$ for $n > 0$ we may restrict $\Phi$ to the equivalence
\[
    \Phi \colon \eqAutId{H}{D \otimes \bK} \to \Proj{1 \otimes e}{(D \otimes \bK)^H}
\]
and $\Psi$ accordingly. Consider the following commutative square
\[\begin{tikzcd}
	{\pi_{n}(\text{Proj}_{1 \otimes e}((D \otimes \mathbb{K})^H))} && {\pi_{n}((\Omega KU^{D}_{1})^H)} \\
	{K_0(C_0(S^n,*) \otimes D^H)} && {K'(C_{0}(\mathbb{R}) \otimes \mathbb{C} \ell _{1} \otimes C(S^{n}) \otimes D^H)}
	\arrow["{\Psi_*}", from=1-1, to=1-3]
	\arrow["\beta"', from=1-1, to=2-1]
	\arrow["\cong"', from=2-1, to=2-3]
	\arrow["\cong", from=1-3, to=2-3]
\end{tikzcd}\]
where we are using the notation $K'(A)= \pi_{0}(\hom_{\text{gr}}(\grS,A \otimes \bK))$ introduced in \cite{paper:Trout}. Note that any element $\gamma: S^n \rightarrow \text{Proj}_{1 \otimes e}((D \otimes \mathbb{K})^H)$ induces a projection $p_{\gamma} \in  C(S^{n}) \otimes D^H$, and set $\beta(\gamma)=[p_{\gamma}] - [1_{C(S^n)} \otimes 1 \otimes e]$. The bottom horizontal map sends $[p]-[q]$ to $\left[\left(f \mapsto \hat{\eta}_1 \otimes
\left(\begin{smallmatrix}
p & 0\\
0 & q
\end{smallmatrix}\right)\right)\right]$ and is an isomorphism by Bott periodicity. Finally, note that any element in $\pi_n ((\Omega KU_1^D)^H)$ defines a map $\varphi \in \hom_{\text{gr}}(\grS,C_0(\R) \otimes \C \ell_1 \otimes C(S^n) \otimes (D \otimes \bK)^H)$. The right hand vertical arrow maps $[\varphi]$ to 
$\left[\left(
\begin{smallmatrix}
\varphi & 0\\
0 & 1_{C(S^n)} \otimes \hat{\eta}_1
\end{smallmatrix}\right)\right]$ and has an inverse given by $\psi \mapsto \psi \oplus 
\left(\begin{smallmatrix}
1_{C(S^n)} \otimes \hat{\eta}_1& 0\\
0 & 0
\end{smallmatrix}\right)$, where $\oplus$ is the addition operation described in~\cite{paper:Trout}. Note that this is a basepoint correction, because in $\Psi_*$ we use a basepoint for $\pi_n$ in the $[1 \otimes e]$-component on both sides instead of the usual $0$-component.

The last step is to prove that $\beta$ is an isomorphism as well. It will follow that $\Psi$ induces isomorphisms on all higher homotopy groups, hence so does~$\theta$. For this purpose, let us consider the following square:
\[\begin{tikzcd}
	{\pi_{n-1}(U(D^H))} & {\pi_n(\text{Proj}_{1 \otimes e}((D \otimes \mathbb{K})^H))} \\
	{K_1(C_0(S^{n-1},*) \otimes D^H)} & {K_0(C_0(S^n,*) \otimes D^H)}
	\arrow["\beta", from=1-2, to=2-2]
	\arrow["\cong"', from=1-1, to=2-1]
	\arrow["\cong", from=1-1, to=1-2]
	\arrow["\cong"', from=2-1, to=2-2]
\end{tikzcd}\]
Here the top horizontal arrow is the inverse of the isomorphism described in Remark \ref{RemProjBU} composed with the canonical isomorphism $\pi_n(BG) \cong \pi_{n-1}(G)$. This map is defined by constructing a principal $G$-bundle over $S^n$ with the prescribed transition map $\psi \in \pi_{n-1}(G)$, explicitly given by $P_{\psi}=S^{n-1} \times I \times G$ modulo the equivalence relation $\sim$ generated by
\begin{equation} \label{eqn:eqrel}
    (z,1,g) \sim (z,0,\psi(z)\cdot g) \text{ and }(z_0,t,g) \sim (z_0,0,g)\ ,
\end{equation}
with the quotient map to $\Sigma S^{n-1} \cong S^n$ as projection map (see \cite[Sec.~4.4]{book:Naber}). Recall $\beta$ maps $[\gamma]$ to $[p_{\gamma}] - [1_{C(S^n)} \otimes 1 \otimes e]$ as in the diagram above. We can interpret the composition of these two arrows as the map sending $\gamma \in \pi_{n-1}(U(D^H))$ to the formal difference $[\mathcal{H}_{\gamma}]-[\underline{\mathcal{H}}]$ of right Hilbert $D^H$-module bundles over $S^n$, where
\[ \mathcal{H}_{\gamma}=\bigl(S^{n-1} \times I \times U(D^H)\bigr) \,/ \sim \ ,\]
and the equivalence relation is as in \eqref{eqn:eqrel} with $\psi=\gamma$. On the other hand, if we start from the left hand vertical arrow we encounter the isomorphism described in Remark \ref{K1}, and the bottom horizontal arrow is the suspension isomorphism in $K$-theory (see Example \ref{indexmap}). It is easy to check that this composite also maps $\gamma$ to $[\mathcal{H}_{\gamma}]-[\underline{\mathcal{H}}]$. Therefore the diagram commutes, implying that the map $\beta$ is an isomorphism, thus concluding the proof.
\end{proof}

Our final result combines Lemma~\ref{lem:comp_vs_tensor} with Theorem~\ref{thm:main_thm}.

\begin{corollary} \label{cor:bdl_classification}
    $\Aut{D \otimes \bK}$ is an equivariant infinite loop space with associated $\Z/p\Z$-equivariant cohomology theory $E_D^*(X)=gl_1(KU^D)_+^*(X)$, and
    \[E^0_D(X)=[X,\Aut{D \otimes \bK}]^{\Z/p\Z} \quad \text{and} \quad E^1_D(X)=[X,B\Aut{D \otimes \bK}]^{\Z/p\Z}.\]
    In particular, isomorphism classes of $\Z/p\Z$-equivariant $C^*$-algebra bundles with fibres isomorphic to the $\Z/p\Z$-algebra $D \otimes \bK$ over the finite $\Z/p\Z$-$CW$-complex $X$ form a group with respect to the fibrewise tensor product that is isomorphic to $E^1_D(X)$.
\end{corollary}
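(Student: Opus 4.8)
The strategy is to deduce the corollary almost formally from Theorem~\ref{thm:main_thm} together with Lemma~\ref{lem:comp_vs_tensor} and the general machinery of Section~\ref{prel_1}. First I would recall that $\cG_D$ is a stable commutative $\cI$-$O_G$-monoid with $\pi_0$ a group (Lemma~\ref{GD_stable}, Corollary~\ref{cor:pikAut}, Lemma~\ref{units+}), so the Segal machine of Section~\ref{prel_1} produces the positive $\Omega$-$G$-spectrum $EH_\cI\cG_D = EH_\cI\Aut{D\otimes\bK}$, whose zeroth space $(EH_\cI\cG_D)_0$ is a group completion of $\Aut{D\otimes\bK}_{h\cI}$. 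By stability and Remark~\ref{rem_stable}, $\Aut{D\otimes\bK}_{h\cI}\simeq\cG_D(\mathbf{1})$ with fixed points $\eqAut{H}{D\otimes\bK}$, and since $\pi_0^H$ is already a group (it is $\mathrm{GL}_1(K_0^H(D))_+$), the group completion map is an equivalence on each fixed point space. Hence $(EH_\cI\Aut{D\otimes\bK})_0\simeq\Aut{D\otimes\bK}$ as $G$-spaces. By Theorem~\ref{thm:main_thm} this spectrum is equivalent to $gl_1(KU^D)_+$, so $\Omega^\infty gl_1(KU^D)_+\simeq\Aut{D\otimes\bK}$ as $G$-spaces, which gives the $E_D^0$ statement: $E_D^0(X)=[X,\Omega^\infty gl_1(KU^D)_+]^{\Z/p\Z}\cong[X,\Aut{D\otimes\bK}]^{\Z/p\Z}$ for any finite $\Z/p\Z$-CW-complex $X$ (using that $[X,-]^{\Z/p\Z}$ sees only the weak $G$-homotopy type, and $\Aut{D\otimes\bK}$ has the $G$-homotopy type of a $G$-CW-complex by Lemma~\ref{lem:GCW-complex}).

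\medskip

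For $E_D^1$ I would argue that the first space of any positive $\Omega$-$G$-spectrum $E$ is a delooping of its zeroth space, and for the Segal machine output $EH_\cI X$ the first space is exactly $(EH_\cI X)_1 = B_\otimes X(\mathbf{1})$ in the notation of Section~\ref{final_1}; specialising to $X=\cG_D$ gives $(EH_\cI\Aut{D\otimes\bK})_1 = B_\otimes\Aut{D\otimes\bK}$. Combining the equivalence $EH_\cI\Aut{D\otimes\bK}\simeq gl_1(KU^D)_+$ from Theorem~\ref{thm:main_thm} with the weak $G$-equivalence $B_\otimes\Aut{D\otimes\bK}\simeq B\Aut{D\otimes\bK}$ from Lemma~\ref{lem:comp_vs_tensor}, we obtain $\Omega^\infty\Sigma^{-1}\big(\tau_{\geq 1}gl_1(KU^D)_+\big)$-type identifications giving $E_D^1(X)=gl_1(KU^D)_+^1(X)=[X,(gl_1(KU^D)_+)_1]^{\Z/p\Z}\cong[X,B\Aut{D\otimes\bK}]^{\Z/p\Z}$. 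Here one has to be slightly careful that the cohomology theory $E_D^1(X)$ defined via the naive $G$-spectrum is represented by the first space only for finite $X$ (so that no $\mathrm{colim}$ over the suspension direction is needed beyond the first level) — this is where finiteness of the CW-complex is used, exactly as in the non-equivariant statement; the positive-$\Omega$ property of the Segal output handles this.

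\medskip

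Finally, the bundle-theoretic reformulation: by the proposition preceding Lemma~\ref{lem:comp_vs_tensor}, $E\Aut{D\otimes\bK}\to B\Aut{D\otimes\bK}$ is a universal $(G,\Aut{D\otimes\bK})$-bundle, and by the lemma relating $(G,A)$-bundles to $(G,\Aut{A})$-bundles, isomorphism classes of locally trivial $\Z/p\Z$-equivariant $C^*$-algebra bundles over $X$ with fibre $D\otimes\bK$ are in bijection with isomorphism classes of $(G,\Aut{D\otimes\bK})$-bundles over $X$, which by universality (invoking $G$-homotopy invariance for $G$-CW base $X$, via Lemma~\ref{lem:GCW-complex} and Lemma~\ref{lem:G-cofib}) are classified by $[X,B\Aut{D\otimes\bK}]^{\Z/p\Z}$. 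The group structure on this set coming from fibrewise tensor product corresponds, under $B_\otimes\Aut{D\otimes\bK}\simeq B\Aut{D\otimes\bK}$ and the infinite loop structure, to the addition in the cohomology group $E_D^1(X)$ — this is precisely the content of the last sentence of Lemma~\ref{lem:comp_vs_tensor} ($B_\otimes\Aut{D\otimes\bK}$ is a classifying space for such bundles, with the $H$-space structure inducing the tensor-product addition). Assembling these bijections and the identification $E_D^1(X)\cong[X,B\Aut{D\otimes\bK}]^{\Z/p\Z}$ gives the final claim.

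\medskip

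\textbf{Expected main obstacle.} The only genuinely delicate point is matching the two group structures on $[X,B\Aut{D\otimes\bK}]^{\Z/p\Z}$: the one coming from the infinite loop space structure on $gl_1(KU^D)_+$ (i.e. from the $\Gamma$-$G$-space $H_\cI\cG_D$, via the $\mu=\otimes$ monoid structure) and the one coming from fibrewise tensor product of $C^*$-algebra bundles. Both ultimately originate from the tensor product on $\cG_D$, and Lemma~\ref{lem:comp_vs_tensor} already packages the comparison $B_\otimes\Aut{D\otimes\bK}\simeq B\Aut{D\otimes\bK}$ along with the assertion that $B_\otimes\Aut{D\otimes\bK}$ classifies these bundles; so the work is to trace through that the $H$-space multiplication on $B_\otimes\Aut{D\otimes\bK}$ implements the fibrewise-tensor addition on isomorphism classes and that this is the same as the loop-sum in $E_D^1$. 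I expect this to be routine given the explicit bar-construction models but it is the step requiring the most care; everything else is a formal assembly of results already in the excerpt.
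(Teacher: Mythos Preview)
Your proposal is correct and follows exactly the approach the paper takes: the paper's proof consists of the single sentence ``Our final result combines Lemma~\ref{lem:comp_vs_tensor} with Theorem~\ref{thm:main_thm},'' and your write-up is a careful unpacking of precisely that combination, invoking the same auxiliary inputs (stability of $\cG_D$, the group-completion identification from Section~\ref{prel_1}, the universal bundle proposition, and Lemmas~\ref{lem:GCW-complex} and~\ref{lem:G-cofib}). The ``expected obstacle'' you flag about matching the two group structures is indeed the only subtle point, and it is exactly what Lemma~\ref{lem:comp_vs_tensor} is designed to handle.
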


\appendix
\section{The equivariant homotopy type of $\Aut{D \otimes \bK}$}
Let $G = \Z/p\Z$, denote by $D = \Endo{V}^{\otimes \infty}$ the UHF-algebra associated to a finite-dimensional unitary $G$-representation $V$ and let $\bK = \bK(\ell^2(G) \otimes H_0)$ for an infinite-dimensional separable Hilbert space $H_0$. Let $\Proj{}{D \otimes \bK}$ be the space of projections in $D \otimes \bK$ equipped with the subspace topology and denote by $\Aut{D \otimes \bK}$ the group of $*$-automorphisms with the point-norm topology considered as a $G$-space with the induced action.

In this appendix we will show that the automorphism group $\Aut{D \otimes \bK}$ is equivariantly well-pointed in the sense that the inclusion of the identity is an equivariant cofibration. Apart from this, we will also prove a few technical results that might be of independent interest: In particular, we will see that $\Aut{D \otimes \bK}$ is $G$-homotopy equivalent to a $G$-CW-complex. 

\begin{lemma} \label{lem:GCW-complex}
Let $G$, $V$, $D$ and $\bK$ be as above. The topological group $\Aut{D \otimes \bK}$ equipped with the conjugation action of $G$ has the $G$-homotopy type of a $G$-CW-complex.
\end{lemma}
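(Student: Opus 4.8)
The plan is to realise $\Aut{D \otimes \bK}$ as a metrizable $G$-ANR and then invoke the equivariant form of Milnor's theorem: for a finite group $G$, a metrizable $G$-ANR has the $G$-homotopy type of a $G$-CW complex (see \cite{paper:Murayama}). First I would record the point-set input. Since $D \otimes \bK$ is separable, fixing a dense sequence $(a_j)_{j \in \N}$ in its closed unit ball makes the point-norm topology on $\Aut{D \otimes \bK}$ into a complete separable metric via
\[
    d(\alpha,\beta) = \sum_{j \in \N} 2^{-j}\bigl(\|\alpha(a_j) - \beta(a_j)\| + \|\alpha^{-1}(a_j) - \beta^{-1}(a_j)\|\bigr)\ ,
\]
so $\Aut{D \otimes \bK}$ is a Polish group. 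Averaging over $G$ we may assume $d$ is $G$-invariant, and since $\|\alpha_g \beta \alpha_g^{-1}(a) - \alpha_g \beta' \alpha_g^{-1}(a)\| = \|\beta(\alpha_g^{-1}a) - \beta'(\alpha_g^{-1}a)\|$ the conjugation action of $G$ is by isometries, hence continuous; thus $\Aut{D \otimes \bK}$ is a metrizable $G$-space.

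Next I would reduce the $G$-ANR property to the fixed-point sets: for a finite group $G$ a metrizable $G$-space $Y$ is a $G$-ANR as soon as $Y^H$ is an ANR for every subgroup $H \subseteq G$ (an equivariant version of the Hanner/Dugundji extension theorem; cf.\ \cite{paper:Murayama}). Since $G = \Z/p\Z$ this leaves only $H \in \{\{e\}, \Z/p\Z\}$, and $\Aut{D \otimes \bK}^H = \eqAut{H}{D \otimes \bK}$. By Proposition~\ref{AutProj} (whose proof applies over each path component of $\Proj{}{(D \otimes \bK)^H}$ meeting the image of the evaluation map, cf.\ also Lemma~\ref{lem:Hurewicz}) the evaluation map $\beta \mapsto \beta(1 \otimes e)$ exhibits $\eqAut{H}{D \otimes \bK}$ as a locally trivial principal $\AutSt{H, 1 \otimes e}{D \otimes \bK}$-bundle over a union of path components of $\Proj{}{(D \otimes \bK)^H}$. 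As being an ANR is a local property of metrizable spaces (Hanner), it now suffices to show that (a) $\Proj{}{(D \otimes \bK)^H}$ is an ANR and (b) the structure group $\AutSt{H, 1 \otimes e}{D \otimes \bK}$ is an ANR.

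For (a) I would use that the set of projections in a separable unital $C^*$-algebra $A$ is an ANR: near a projection $p$, every projection $q$ with $\|q - p\| < 1$ equals $u\,p\,u^*$ for a unitary $u = u(q) \in U(A)$ depending continuously on $q$, so a neighbourhood of $p$ in $\Proj{}{A}$ is homeomorphic to an open subset of the Banach homogeneous space $U(A)/\bigl(U(pAp) \times U((1-p)A(1-p))\bigr)$, a Banach manifold (the isotropy Lie algebra is complemented in the ambient one), hence locally an ANR; one applies this to a unital corner of $M((D \otimes \bK)^H)$ and again uses Hanner. For (b) I would proceed as in \cite{paper:DP-Dixmier} and \cite{paper:EP-Circle}: restriction to the corner $(1 \otimes e)(D \otimes \bK)(1 \otimes e) \cong D$ presents $\AutSt{H, 1 \otimes e}{D \otimes \bK}$ as the total space of a further locally trivial fibration over $\eqAut{H}{D}$ whose fibre is a unitary group of a unital $C^*$-algebra --- thus a Banach--Lie group, hence an ANR --- while $\eqAut{H}{D}$ is itself an ANR by the same bundle reasoning (or handled directly, being contractible by Corollary~\ref{cor:Aut_contractible}). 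Combining (a) and (b), $\eqAut{H}{D \otimes \bK}$ is an ANR, so $\Aut{D \otimes \bK}$ is a $G$-ANR and the lemma follows.

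I expect the genuine obstacle to be step (b), namely propagating the Banach-manifold/ANR property down the tower of fibrations in Proposition~\ref{AutProj} and Remark~\ref{RemProjBU} without losing it at the fibre: one must verify that the stabiliser groups occurring there are not merely homotopy equivalent to CW complexes but are honestly ANRs (equivalently, sufficiently locally well-behaved). An alternative that sidesteps the stabilisers is to establish local contractibility of $\Aut{D \otimes \bK}$ directly from the contracting homotopies of Lemma~\ref{lem:contr_htpies} and Corollary~\ref{cor:Aut_contractible} and then appeal to the fact that a suitably regular (e.g.\ locally contractible) metrizable topological group is an ANR; but verifying local contractibility carefully is of comparable difficulty, so I would expect the bundle-theoretic route to be the cleaner one to write up.
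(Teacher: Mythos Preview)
Your reduction to showing each fixed-point set $\eqAut{H}{D\otimes\bK}$ is an ANR is a reasonable strategy, and step~(a) is fine. The genuine gap is the one you flag in step~(b): contractibility of $\AutSt{H,1\otimes e}{D\otimes\bK}$ from Corollary~\ref{cor:Aut_contractible} does \emph{not} imply it is an ANR, and neither the further fibration over $\eqAut{H}{D}$ nor the claim that $\eqAut{H}{D}$ is itself an ANR is established in \cite{paper:DP-Dixmier} or \cite{paper:EP-Circle}. Your parenthetical ``handled directly, being contractible'' does not close the loop, and the ``same bundle reasoning'' is circular without an independent starting point. Proving that these point-norm automorphism groups are honest ANRs (as opposed to merely having CW homotopy type) appears genuinely hard, and the alternative via local contractibility of a Polish group runs into the same issue: local contractibility of an infinite-dimensional metrizable space does not by itself yield the ANR property.

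The paper bypasses this obstacle by never asserting that $\Aut{D\otimes\bK}$ is a $G$-ANR. Instead it works with the associated $O_G$-space $A_D(G/H)=\eqAut{H}{D\otimes\bK}$ and the $O_G$-space of projections $P_D(G/H)=\Proj{I_H}{(D\otimes\bK)^H}$: the evaluation map is a \emph{levelwise homotopy equivalence} (this uses only the contractibility of the stabiliser, not any ANR property), and applying Elmendorf's inverse $\Phi$ via the fat bar construction promotes this to a $G$-homotopy equivalence $\Aut{D\otimes\bK}\simeq\Phi(A_D)\simeq\Phi(P_D)$. Since each $P_D(G/H)$ has non-equivariant CW type by \cite[Lem.~2.7]{paper:DP-Dixmier}, \cite[Prop.~13.2]{paper:Murayama} gives $\Phi(P_D)$ the $G$-homotopy type of a $G$-CW-complex. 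The upshot is that the paper needs only levelwise \emph{homotopy equivalences} and levelwise CW type---both already in hand---whereas your route demands the strictly stronger statement that the fixed-point sets are actual ANRs.
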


\begin{proof}
Pick a projection $e \in \bK$ whose image is the one-dimensional trivial $G$-representation. Let $q_0 = 1 \otimes e \in D \otimes \bK$. Let $P_D \colon O_G^{\op} \to \Top$ be the $O_G$-space defined by 
\[
    P_D(G/H) = \Proj{I_H}{(D \otimes \bK)^H}\ ,
\]
where $I_H = GL_1(K_0^H(D)_+)$. Let $A_D \colon O_G^\op \to \Top$ be the $O_G$-space associated to the $G$-space $\Aut{D\otimes \bK}$ equipped with the conjugation action, i.e.\ 
\[
    A_D(G/H) = (\Aut{D \otimes \bK})^H = \eqAut{H}{D \otimes \bK}\ .
\]
Let $\pi \colon A_D \to P_D$ be the map of $O_G$-spaces given by $\pi(\beta) = \beta(q_0)$ and note that $\pi$ provides a homotopy equivalence $A_D(G/H) \to P_D(G/H)$ for each subgroup $H \subseteq G$. For $H = \{e\}$ this is proven in \cite[Lem.~2.16 and Thm.~2.5]{paper:DP-Dixmier}, for $H = \Z/p\Z$ the proofs of \cite[Prop.~3.1, Lem.~2.9 and Thm.~2.6]{paper:EP-Circle} for circle actions carry over to the case of $\Z/p\Z$ . 

Let $\Phi \colon \text{Fun}(O_G^\op, \Top) \to \GTop$ be the homotopy inverse of the fixed-point functor defined in (\ref{eqn:elmendorf}). Note that $\Phi(X)$ can be written as the geometric realisation of a simplicial space obtained from the bar construction. The fact that it provides a homotopy inverse is witnessed by a simplicial homotopy equivalence. If we therefore use the fat geometric realisation (i.e.\ ignoring the degeneracy maps), the resulting $\Phi$ is still a homotopy inverse. Moreover, $\pi$ provides a level-wise $G$-homotopy equivalence of simplicial spaces
\[
    B_\bullet(A_D, O_G, M) \to B_\bullet(P_D, O_G, M)\ .
\]
The fat geometric realisation turns this into a $G$-homotopy equivalence. This is well-known in the non-equivariant setting (see \cite[Prop.~A.1~(ii)]{paper:SegalCatAndCoh}) and can be deduced from the fact that pushouts along cofibrations preserve homotopy equivalences and that colimits over cofibrations are homotopy invariant. Both of these statements are still true in the $G$-equivariant setting (see \cite[Thm.~1.1 and Thm.~1.2]{paper:Waner}). Each of the spaces $P_D(G/H)$ has the homotopy type of a CW-complex by \cite[Lem.~2.7]{paper:DP-Dixmier}. Therefore $\Phi(P_D)$ has the homotopy type of a $G$-CW-complex by \cite[Prop.~13.2]{paper:Murayama} and we have the following sequence of $G$-homotopy equivalences
\[
    \begin{tikzcd}
        \Aut{D \otimes \bK} & \ar[l,"\simeq" above] \Phi(A_D) \ar[r,"\simeq"] & \Phi(P_D)  \ .   
    \end{tikzcd}\qedhere
\]
\end{proof}

As we will see in the next lemma with respect to finite group actions the space of projections $\Proj{}{A}$ of a $G$-$C^*$-algebra also has the $G$-homotopy type of a $G$-CW-complex.
\begin{lemma}
    Let $G$ be a finite group and let $A$ be a $G$-$C^*$-algebra. The subspace $\Proj{}{A} \subset A$ has the $G$-homotopy type of a $G$-CW-complex. Let $I \subseteq \pi_0(\Proj{}{A})$ be a $G$-invariant subset and let $\Proj{I}{A}$ be the pullback of $\Proj{}{A}$ over $I$. Then the space
    \[
        \Proj{I}{A} = \coprod_{[q] \in I} \Proj{q}{A}
    \]
    has the $G$-homotopy type of a $G$-CW-complex as well.
\end{lemma}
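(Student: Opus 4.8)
The plan is to follow the strategy of the proof of Lemma~\ref{lem:GCW-complex}: reduce the assertion to the fixed-point spaces, where it becomes a non-equivariant statement about spaces of projections in $C^*$-algebras, and then reassemble the pieces with the equivariant machinery already used there.

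First I would assemble the non-equivariant input. For any $C^*$-algebra $B$ the space $\Proj{}{B}$ is a metrizable ANR: near a fixed projection $q$ a self-adjoint element $x$ with $\|x-q\|$ small has spectrum contained in two small intervals around $0$ and $1$, and the assignment sending $x$ to the spectral projection onto the part of $\sigma(x)$ near $1$ — defined by holomorphic functional calculus along a fixed contour enclosing $1$ — gives a continuous retraction of a neighbourhood of $q$ in $B_{\mathrm{sa}}$ onto $\Proj{}{B}$; equivalently, since projections at distance $<1$ are unitarily equivalent by a unitary depending continuously on them, $\Proj{}{B}$ is locally a homogeneous space of the unitary Banach--Lie group of the unitisation of $B$, hence a Banach manifold. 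In particular $\Proj{}{B}$ has the homotopy type of a CW-complex — this is \cite[Lem.~2.7]{paper:DP-Dixmier}, used in the same way in Lemma~\ref{lem:GCW-complex}. Moreover path-components of $\Proj{}{B}$ coincide with connected components and are open (two projections at distance $<1$ are joined by a path $t\mapsto u_t q u_t^*$), so any clopen subspace of $\Proj{}{B}$ is again an ANR and again has the homotopy type of a CW-complex.

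Next I would identify the fixed points. Since $G$ is finite, $A^H=\{a\in A:\alpha_h(a)=a\text{ for all }h\in H\}$ is a $C^*$-subalgebra of $A$ (the image of the averaging projection $a\mapsto \frac{1}{|H|}\sum_{h\in H}\alpha_h(a)$), and a projection of $A$ is fixed by $H$ exactly when it lies in $A^H$; hence $\Proj{}{A}^H=\Proj{}{A^H}$. Likewise $\Proj{I}{A}^H$ is the set of $p\in\Proj{}{A^H}$ whose class in $\pi_0(\Proj{}{A})$ lies in $I$, i.e. the preimage of $I$ under the continuous map $\Proj{}{A^H}\to\pi_0(\Proj{}{A})$, hence a clopen subspace of $\Proj{}{A^H}$. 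By the previous paragraph, $\Proj{}{A}^H$ and $\Proj{I}{A}^H$ have the homotopy type of CW-complexes for every subgroup $H\subseteq G$.

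Finally I would upgrade to the $G$-equivariant statement. The clean route is to observe that $\Proj{}{A}$, being metrizable with all fixed-point sets ANRs, is a $G$-ANR (equivariant ANR theory for finite groups, cf. \cite{paper:Waner}), and that a $G$-ANR has the $G$-homotopy type of a $G$-CW-complex by the equivariant Milnor theorem \cite[Prop.~13.2]{paper:Murayama}; the same applies to the clopen $G$-subspace $\Proj{I}{A}$. Alternatively one can mimic Lemma~\ref{lem:GCW-complex} literally: the $O_G$-space $G/H\mapsto\Proj{}{A}^H$ has CW-homotopy-type values, so its image under $\Phi$ from \eqref{eqn:elmendorf} has the $G$-homotopy type of a $G$-CW-complex, and the Elmendorf counit is a $G$-weak equivalence onto $\Proj{}{A}$ which is a $G$-homotopy equivalence by equivariant Whitehead once $\Proj{}{A}$ is known to be a $G$-ANR. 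Either way, the one genuinely new point beyond Lemma~\ref{lem:GCW-complex} is the verification that the $C^*$-algebraic space $\Proj{}{A}$ (and its clopen pieces) is a $G$-ANR rather than merely a $G$-space with the correct fixed-point homotopy types; for $G$ finite this should reduce, via an equivariant embedding-and-retraction argument, to the non-equivariant ANR property recorded in the first step, and I expect this to be the main technical obstacle.
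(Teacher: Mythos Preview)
Your plan is reasonable and your functional-calculus retraction is exactly the right geometric input, but the obstacle you flag at the end is real and you do not close it. The inference ``metrizable with all fixed-point sets ANRs $\Rightarrow$ $G$-ANR'' is not available in this generality: the Jaworowski-type criteria of that shape require finite covering dimension, which $\Proj{}{A}$ typically lacks. Your alternative via Elmendorf does not help either, since upgrading the counit $\Phi(P)\to\Proj{}{A}$ from a $G$-weak equivalence to a $G$-homotopy equivalence again needs $\Proj{}{A}$ to have $G$-CW type --- which is what you are trying to prove.

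The paper sidesteps this entirely by never asking whether $\Proj{}{A}$ itself is a $G$-ANR. Instead it works with the ambient space: the self-adjoint part $A_{sa}$ is a real Banach $G$-space, hence a $G$-ANE by Antonyan's theorem \cite{paper:Antonyan}; the $G$-invariant open subset $A_{sa}^0=\{x\in A_{sa}:\tfrac12\notin\sigma(x)\}$ is then a $G$-ANE, hence a $G$-ANR, hence has the $G$-homotopy type of a $G$-CW-complex by \cite[Thm.~13.3]{paper:Murayama}. Your functional-calculus retraction (with the characteristic function of $(\tfrac12,\infty)$) is automatically $G$-equivariant because the action is by $*$-automorphisms, so $\Proj{}{A}$ is $G$-dominated by $A_{sa}^0$, and Waner's lemma \cite[Lem.~4.7]{paper:Waner} finishes. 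The second statement follows immediately since $\Proj{I}{A}$ is a union of components. The moral: rather than promoting the non-equivariant ANR property of $\Proj{}{A}$ to a $G$-ANR property, promote the \emph{ambient} Banach space, where equivariant ANE theory applies off the shelf, and then descend via domination.
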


\begin{proof}
    By \cite{paper:Antonyan} the real Banach space $A_{sa}$ of self-adjoint elements in the algebra $A$ is a $G$-ANE ($G$-equivariant absolute neighbourhood extensor). Therefore the $G$-invariant open subset 
    \[
        A_{sa}^0 = \{ x \in A_{sa} \ |\ \tfrac{1}{2} \notin \sigma(x) \} \subset A_{sa}
    \]
    is a $G$-ANE as well \cite[Prop.~6.1]{paper:Murayama}. By \cite[Prop.~6.4]{paper:Murayama} it is then also a $G$-ANR ($G$-equivariant absolute neighbourhood retract) and \cite[Thm.~13.3]{paper:Murayama} implies that it has the $G$-homotopy type of a $G$-CW-complex. Note that 
    \[
        \Proj{}{A} \subset A_{sa}^0\ .
    \]
    Applying continuous functional calculus with the map that is $0$ on $(-\infty, \tfrac{1}{2})$ and $1$ on $(\tfrac{1}{2}, \infty)$ to the elements of $A_{sa}^0$ we obtain a $G$-equivariant map 
    \[
        \pi \colon A_{sa}^0 \to \Proj{}{A}\ ,
    \]
    that restricts to the identity on $\Proj{}{A}$, In particular, \ $\Proj{}{A}$ is dominated by a space with the $G$-homotopy type of a $G$-CW-complex. Therefore \cite[Lem.~4.7]{paper:Waner} implies that it has the $G$-homotopy type of a $G$-CW-complex itself\footnote{The lemma in \cite{paper:Waner} is stated for spaces that are dominated by $G$-CW-complexes, but the argument actually only requires the dominating space to have the correct $G$-homotopy type.}. The second statement is a consequence of the first.
\end{proof}

In the case of finite group actions evaluating an automorphism at a fixed projection gives an equivariant Hurewicz fibration as we will see below.

\begin{lemma} \label{lem:Hurewicz}
Let $G$ be a finite group and let $A$ be a $G$-$C^*$-algebra. Let $q_0 \in \Proj{}{A}$ be a projection, which is fixed by $G$.
Let 
\[
    \Xi \colon \Aut{A} \to \Proj{I}{A} \quad, \quad \beta \mapsto \beta(q_0)
\]
where $I \subseteq \pi_0(\Proj{}{A})$ is the image of $\Xi_*$ on $\pi_0$. Then $\Xi$ is a Hurewicz $G$-fibration (where $G$ acts on the domain by conjugation). 
\end{lemma}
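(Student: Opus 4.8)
The plan is to verify the Hurewicz $G$-fibration property directly via the covering homotopy property for $G$-CW-complexes, reducing it to the ordinary (non-equivariant) covering homotopy property for fixed-point subspaces by exploiting that $q_0$ is $G$-fixed. Concretely, recall that for actions of a compact (here finite) group, a $G$-equivariant map $p \colon E \to B$ between $G$-spaces having the $G$-homotopy type of $G$-CW-complexes is a Hurewicz $G$-fibration if and only if $p^H \colon E^H \to B^H$ is a Hurewicz fibration for every subgroup $H \subseteq G$ (see e.g.\ the equivariant lifting criteria; this also follows from the existence of a $G$-lifting function, which can be assembled from fixed-point lifting functions since the domain and codomain are compactly generated and $G$ is finite). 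So the first step is to identify the fixed-point maps: for $H \subseteq G$ we have $\Aut{A}^H = \eqAut{H}{A}$ and, since $q_0$ is $G$-fixed, $\Xi^H$ is exactly the evaluation map $\eqAut{H}{A} \to \Proj{}{A}^H$, $\beta \mapsto \beta(q_0)$, landing in the union of those components of $\Proj{A^H}$ that are hit. Here one must be slightly careful that $\Proj{I}{A}^H$ is the correct codomain, i.e.\ that taking $H$-fixed points of the pullback over $I$ agrees with the pullback of $\Proj{A^H}$ over the image of $\Xi^H$ on $\pi_0$; this is a bookkeeping check, using that $I$ was defined as the image of $\Xi_*$ and that $A^H$ is itself a $C^*$-algebra so $\Proj{A^H}$ behaves well.

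The second and main step is therefore to show that the \emph{non-equivariant} evaluation map $\eqAut{H}{A} \to \Proj{}{A^H}$, $\beta \mapsto \beta(q_0)$, is a Hurewicz fibration onto its image. Here $A^H$ is a $C^*$-algebra and $q_0 \in \Proj{}{A^H}$. The key input is a local-section-type statement: given a path $t \mapsto p_t$ in $\Proj{}{A^H}$ with $p_0 = \beta_0(q_0)$ for some $\beta_0 \in \eqAut{H}{A}$, one produces a lift by transporting along a continuous family of unitaries. Explicitly, for a homotopy $f \colon Z \times [0,1] \to \Proj{}{A^H}$ and a lift $\tilde f_0 \colon Z \to \eqAut{H}{A}$ of $f_0$, first use that nearby projections in a $C^*$-algebra are unitarily equivalent via a canonical formula (the standard ``$z = (2p-1)(2q-1)+1$, then $u = z|z|^{-1}$'' trick) to get, locally in $Z \times [0,1]$, a continuous unitary path $u$ in the (multiplier algebra of the) $H$-fixed subalgebra with $u_s\, p_0\, u_s^* = p_s$; then set $\tilde f_s = \Ad(u_s) \circ \tilde f_0$. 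One then patches these local solutions into a global lifting function using a partition of unity on the compactly generated base — this is where one invokes that the relevant spaces have the homotopy type of CW-complexes (Lemma~\ref{lem:GCW-complex} and the preceding lemma) so that the uniformisation/patching of local Hurewicz fibrations applies. The subtlety that these unitaries must be taken \emph{inside} $A^H$ (or its multiplier algebra) rather than $A$ is exactly what makes the fixed-point map, not just $\Xi$ itself, a fibration; since $q_0$ is $G$-fixed the formulas above automatically stay in $A^H$.

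Finally, one assembles: each $\Xi^H$ is a Hurewicz fibration onto $\Proj{I}{A}^H$ by the previous paragraph, the domain $\Aut{A}$ has the $G$-homotopy type of a $G$-CW-complex by Lemma~\ref{lem:GCW-complex} and the codomain $\Proj{I}{A}$ likewise by the lemma preceding Lemma~\ref{lem:Hurewicz}, and therefore $\Xi$ is a Hurewicz $G$-fibration. I expect the main obstacle to be the second step: making the unitary-transport lifting genuinely continuous in all parameters and globalising it, i.e.\ producing an honest $G$-lifting function rather than just verifying the $G$-CHP against $G$-CW-complexes — though for the uses later in the paper (where the base is a finite $\Z/p\Z$-CW-complex) the weaker statement already suffices, and the CW-type hypotheses let one pass between the two via standard patching results (as in \cite[Thm.~1.1, Thm.~1.2]{paper:Waner}).
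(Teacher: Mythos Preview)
There are two genuine gaps. First, the reduction principle you invoke --- that $p$ is a Hurewicz $G$-fibration whenever each $p^H$ is a Hurewicz fibration --- is the characterisation of \emph{Serre} $G$-fibrations (the $G$-CHP against $G$-CW-pairs), not of Hurewicz $G$-fibrations (the $G$-CHP against arbitrary $G$-spaces, equivalently the existence of a $G$-equivariant path-lifting function). Your suggestion that one can ``assemble a $G$-lifting function from fixed-point lifting functions'' does not work without further input: the lifting functions on the various $E^H$ are in no way compatible across the subgroup lattice, and there is no averaging procedure producing a $G$-equivariant one. The $G$-CW-type hypothesis on $E$ and $B$ does not help, since the CHP must still hold against \emph{all} $G$-spaces. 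Second, you invoke Lemma~\ref{lem:GCW-complex} to obtain $G$-CW type for $\Aut{A}$, but that lemma is stated and proved only for $A = D \otimes \bK$ with $G = \Z/p\Z$; the present lemma is for an arbitrary finite group and an arbitrary $G$-$C^*$-algebra $A$, so that input is unavailable in this generality.

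The paper's proof avoids both problems by showing directly that $\Xi$ is a \emph{locally $G$-trivial} fibre bundle. At $p_0 \in \Proj{I}{A}$ with stabiliser $H = \mathrm{stab}_G(p_0)$ one uses Palais' slice theorem on the metric (hence completely regular) space $\Proj{I}{A}$ to obtain a tube neighbourhood $U \cong G \times_H S$ with $S$ an $H$-invariant neighbourhood of $p_0$ inside the norm-ball of radius~$1$. The canonical continuous unitary transport $q \mapsto u_{p_0}(q)$ is $H$-equivariant because it is given by an explicit $*$-polynomial in $p_0$ and $q$, and one writes down a $G$-equivariant homeomorphism $G \times_H (S \times \Autf{q_0}{p_0}{A}) \cong \Xi^{-1}(U)$ via $[g,(q,\beta)] \mapsto \alpha_g \circ \Ad_{u_{p_0}(q)} \circ\, \beta \circ \alpha_g^{-1}$. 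Since $\Proj{I}{A}$ is metric, hence paracompact, local $G$-triviality together with the cited results of Booth--Hoskins, Lashof and Waner gives the Hurewicz $G$-fibration. Your instinct to use $H$-equivariant unitary transport is exactly right; the missing idea is to feed it into a slice-theorem trivialisation rather than a fixed-point CHP argument.
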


\begin{proof}
We will show that $\Xi \colon \Aut{A} \to \Proj{I}{A}$ is a locally trivial $G$-fibration in the sense of \cite[Sec.~5]{paper:BoothHoskins}. Since the codomain is a metric space, hence paracompact, the result follows from \cite[Lem.~5.1]{paper:BoothHoskins}, \cite[Lem.~1.12]{paper:LashofEquivBundles} and \cite[Lem.~3.2.4]{paper:WanerClassFib}. Let $p_0 \in \Proj{I}{A}$ and $H = \text{stab}_G(p_0)\subseteq G$ be the stabiliser of $p_0$. For two projections $s, t \in \Proj{I}{A}$ we define 
\[
    \Autf{s}{t}{A} = \{ \beta \in \Aut{A}\ |\ \beta(s) = t \} \subseteq \Aut{A}\ .
\]
The group $H$ acts on $\Autf{q_0}{p_0}{A}$ by conjugation and 
\[
    G \times_H \Autf{q_0}{p_0}{A} \cong \Autf{q_0}{Gp_0}{A} \to G/H
\]
with $\Autf{q_0}{Gp_0}{A} = \{\beta \in \Aut{A}\ |\ \beta(q_0) \in G \cdot p_0\}$ is $G$-equivariant and a $G$-model trivial fibration \cite[Sec.~5]{paper:BoothHoskins}. Define
\[    
    U_0 = \{ q \in \Proj{I}{A}\ | \ \lVert q-p_0 \rVert < 1\} \ .
\]
The subspace $U_0$ is $H$-invariant. As a metric space $\Proj{I}{A}$ is completely regular. By \cite[Thm.~1.7.19]{paper:Palais} there exists a slice $V \subseteq \Proj{I}{A}$ with $p_0 \in V$ and $V \cap U_0$ provides another slice containing $p_0$. Let 
\[
    U = G \cdot (V \cap U_0) \cong G \times_H (V \cap U_0)\ .
\]
By \cite[Prop.~II.3.3.4]{book:BlackadarOpAlg} there is a continuous map
\(
    u_{p_0} \colon U_0 \to U(M(A))
\)
with the properties 
\begin{enumerate}[i)]
    \item \label{it:transport} $u_{p_0}(q)\,p_0\,u_{p_0}(q)^* = q$,
    \item \label{it:equiv} $\alpha_h(u_{p_0}(q)) = u_{p_0}(\alpha_h(q))$ for all $h \in H$.
\end{enumerate}
Now consider the continuous map
\[
   \kappa \colon G \times_H ((V \cap U_0) \times \Autf{q_0}{p_0}{A}) \to \Xi^{-1}(U)
\]
given by $\kappa([g, (q,\beta)]) = \alpha_g \circ \Ad_{u_{p_0}(q)} \circ \beta \circ \alpha_g^{-1}$. Since
\[
    \alpha_{gh} \circ \Ad_{u_{p_0}(\alpha_h^{-1}(q))} \circ\ (\alpha_h^{-1} \circ \beta \circ \alpha_h) \circ \alpha_{gh}^{-1} = \alpha_g \circ \Ad_{u_{p_0}(q)} \circ \beta \circ \alpha_g^{-1}
\]
this is well-defined. It is also equivariant with respect to the left action of~$G$ on the domain and the conjugation action on the codomain. To define the inverse map, choose coset representatives $S = \{g_0 = e, g_1, \dots, g_n\} \subset G$ for~$G/H$. Observe that for $\beta \in \Xi^{-1}(U)$ the projection $\beta(q_0) \in G \cdot (V \cap U_0)$ can be written as $\alpha_{g_\beta}(q_\beta)$ for a unique $g_\beta \in S$ and a projection~$q_\beta$ with $\lVert q_\beta - p_0 \rVert < 1$. Define
\[
    \theta \colon \Xi^{-1}(U) \to G \times_H ((V \cap U_0) \times \Autf{q_0}{p_0}{A})
\]
by $\theta(\beta) = [g_\beta, (q_\beta, \Ad_{u_{p_0}(q_\beta)^*} \circ\ 
\alpha_{g_\beta}^{-1} \circ \beta \circ \alpha_{g_\beta})]$. A straightforward computation shows that $\theta \circ \kappa = \id{}$ and $\kappa \circ \theta = \id{}$, which implies that $\theta$ is $G$-equivariant as well. Hence, $\kappa$ is a local trivialisation.
\end{proof}

\begin{remark}
    The space $\Autf{q_0}{p_0}{A}$ is a right $\AutSt{q_0}{A}$-torsor. The left action of $G$ on $\Aut{A}$ commutes with the right action by $\AutSt{q_0}{A}$ and $\pi$ is equivariant with respect to those two actions. Replacing conjugation actions by left actions everywhere in the proof one can show that $\Xi \colon \Aut{A} \to \Proj{I}{A}$ is a $G$-equivariant principal bundle with structure group $\AutSt{q_0}{A}$ in analogy to \cite[Lem.~2.8]{paper:DP-Dixmier}. 
\end{remark}

\begin{lemma} \label{lem:G-cofib}
Let $G$, $V$, $D$ and $\bK$ be as in the first paragraph of the appendix. The topological group $\Aut{D \otimes \bK}$ equipped with the conjugation action of $G$ is equivariantly well-pointed in the sense that the inclusion $\{\id{}\} \to \Aut{D \otimes \bK}$ is a $G$-cofibration.
\end{lemma}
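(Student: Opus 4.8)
The plan is to deduce the claim from two facts: (i) $\Aut{D \otimes \bK}$, with the conjugation $G$-action, is a metrizable $G$-ANR; and (ii) a closed $G$-ANR subspace of a metrizable $G$-ANR is a $G$-cofibration (the equivariant Borsuk homotopy–extension theorem, equivalently: such a pair is an equivariant NDR pair). Since $D \otimes \bK$ is separable and the $G$-action on it is by isometries, the point–norm topology on $\Aut{D \otimes \bK}$ is induced by a complete $G$-invariant metric, so $\Aut{D \otimes \bK}$ is a Polish $G$-group and $\{\id{}\}$ is a closed subspace which is trivially a $G$-ANR. Thus the whole lemma reduces to showing that $\Aut{D \otimes \bK}$ is a $G$-ANR, and the argument will be the equivariant counterpart of the non-equivariant fact used in \cite{paper:DP-Dixmier}.

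For this I would use the evaluation map
\[
    \Xi \colon \Aut{D \otimes \bK} \to \Proj{I}{D \otimes \bK}\ , \qquad \beta \mapsto \beta(1 \otimes e)\ ,
\]
which by Lemma~\ref{lem:Hurewicz} and the remark following it is a locally trivial $G$-equivariant principal bundle with structure group $\AutSt{1 \otimes e}{D \otimes \bK}$; the local trivialisations constructed there live over $G$-invariant slice neighbourhoods and are $G$-equivariant. The base $\Proj{I}{D \otimes \bK}$ is a $G$-ANR by the lemma preceding Lemma~\ref{lem:Hurewicz} (it is a $G$-retract of the $G$-ANE $A_{sa}^0$ for $A = D\otimes\bK$). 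Being a $G$-ANR is a local property for metrizable $G$-spaces by the equivariant ANR theory of Antonyan and Murayama, finite products of $G$-ANRs are $G$-ANRs, and $G\times_H(-)$ carries $H$-ANRs to $G$-ANRs; so, patching the trivialisations over a cover of the base, it suffices to prove that the fibre $\AutSt{1 \otimes e}{D \otimes \bK}$ is a $G$-ANR (only the subgroups $H \in \{\{e\},\ \Z/p\Z\}$ arise).

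To treat the fibre I would argue analogously: $\AutSt{1 \otimes e}{D \otimes \bK}$ is contractible by Corollary~\ref{cor:Aut_contractible}, and the $G$-ANR property is obtained by the same circle of ideas — an automorphism fixing $1\otimes e$ restricts to the corner $(1\otimes e)(D\otimes\bK)(1\otimes e)\cong D$, and repeatedly using evaluation maps at invariant projections reduces matters to spaces of projections in corner algebras $G$-isomorphic to $D\otimes\bK$ (each a $G$-ANR by the lemma preceding Lemma~\ref{lem:Hurewicz}) and ultimately to $\eqAut{\Z/p\Z}{D}$, exactly as in the non-equivariant treatment of \cite{paper:DP-Dixmier}. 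Assembling these steps shows $\AutSt{1 \otimes e}{D \otimes \bK}$, and hence $\Aut{D \otimes \bK}$, is a $G$-ANR, completing the proof.

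The main obstacle is not a single hard estimate but the systematic upgrade of the non-equivariant arguments to $G$-equivariant ones: one must verify that every local trivialisation, every slice, and every functional–calculus retraction onto a space of projections can be chosen $G$-invariantly, and that the relevant extensor and retract spaces remain absolute neighbourhood extensors in the equivariant category. This is precisely where finiteness of $G = \Z/p\Z$ enters — via the equivariant Dugundji-type results of Antonyan, the equivariant CW/ANR dictionary of Murayama, and the existence of $G$-invariant slices in the metrizable $G$-spaces $\Proj{}{\,\cdot\,}$ — and it is the only genuine difference from the non-equivariant proof.
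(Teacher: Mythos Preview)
Your approach is genuinely different from the paper's. The paper does not go through $G$-ANR theory; it gives a direct Str{\o}m-type construction. It fixes a $G$-invariant metric on $\Aut{D\otimes\bK}$, defines an explicit $G$-invariant function $v$ with $v^{-1}(0)=\{\id{}\}$ and $v^{-1}([0,1)) = \Xi^{-1}(W)$ where $W$ is the ball $\{\lVert q-q_0\rVert<\tfrac12\}$ around $q_0=1\otimes e$ in the projections, uses the local trivialisation of Lemma~\ref{lem:Hurewicz} to identify $\Xi^{-1}(W)$ $G$-equivariantly with $W\times\AutSt{q_0}{D\otimes\bK}$, and then retracts each factor explicitly: $W$ via functional calculus along the straight line to $q_0$, and the stabiliser via the equivariant contracting homotopy coming from Corollary~\ref{cor:Aut_contractible}. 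No global $G$-ANR statement is invoked --- only this single explicit $G$-NDR neighbourhood of the identity.

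Your route would work if you could establish that $\Aut{D\otimes\bK}$ is a $G$-ANR, but the argument for the fibre $\AutSt{1\otimes e}{D\otimes\bK}$ has a gap. The recursive evaluation scheme you sketch does not close: the corner $(1\otimes e)(D\otimes\bK)(1\otimes e)\cong D$ is not $G$-isomorphic to $D\otimes\bK$, so you are not back in the same situation after one step, and ``ultimately to $\eqAut{\Z/p\Z}{D}$'' conflates the $G$-space with its fixed points --- the $G$-ANR question is about the stabiliser as a $G$-space, not about any one fixed-point set. More to the point, the non-equivariant reference you invoke, \cite{paper:DP-Dixmier} (Prop.~2.26 there), proves well-pointedness by exactly the direct Str{\o}m construction the present paper generalises, not by an ANR recursion; so ``exactly as in the non-equivariant treatment'' does not supply the missing step. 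To close the gap you would in effect need the local equivariant deformation retraction onto $\id{}$ that the paper constructs anyway, at which point the Str{\o}m criterion already finishes the job without the $G$-ANR detour.
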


\begin{proof}
    By a straightforward $G$-equivariant generalisation of \cite[Thm.~2]{paper:Strom} (see also \cite[Lem.~2.3]{paper:Grant}) it suffices to construct a $G$-invariant continuous map $v \colon \AutId{D \otimes \bK} \to [0,1]$ such that 
    \begin{enumerate}
        \item $v^{-1}(0) = \{\id{}\}$,
        \item $U_v = v^{-1}([0,1)) = \{ \beta \in \AutId{D \otimes \bK} \ |\ v(\beta) < 1\}$ deformation retracts $G$-equivariantly to $\id{}$.
    \end{enumerate}
    Pick a countable dense subset $S = \{ a_k \in D\otimes \bK \ |\ k \in \N\}$, which is $G$-invariant as a set. The point-norm topology on $\Aut{D \otimes \bK}$ is the metric topology associated to the metric
    \[
        d(\beta_1, \beta_2) = \sum_{k \in \N} \frac{\lVert \beta_1(a_k) - \beta_2(a_k) \rVert}{2^k \lVert a_k \rVert}\ .
    \]
    Since each $\alpha_g$ gives a bijection $S \to S$ and $\lVert \alpha_g(b) \rVert = \lVert b \rVert$ for all $b \in D \otimes \bK$, this metric is $G$-invariant. Let $q_0 = 1 \otimes e \in D \otimes \bK$ be as in the proof of Lemma~\ref{lem:GCW-complex} and $\Xi$ be as in Lemma~\ref{lem:Hurewicz}. We may now proceed as in \cite[Prop.~2.26]{paper:DP-Dixmier}. Let
    \[
        v(\beta) = \max\{\min\{d(\alpha, \id{}), \tfrac{1}{2}\}, \min\{1, 2\lVert \beta(q_0) - q_0 \rVert\} \}\ ,
    \]
    which is $G$-invariant, continuous and satisfies $v^{-1}(0) = \{\id{}\}$. Note that
    \[
        U_v = \Xi^{-1}(W) \quad \text{with} \quad W = \{ q \in \Proj{q_0}{D \otimes \bK} \ |\ \lVert q - q_0 \rVert < \tfrac{1}{2} \}\ .
    \]
    As in the proof of Lemma~\ref{lem:Hurewicz} the map
    \[
        \kappa \colon W \times \AutSt{q_0}{D\otimes \bK} \to \Xi^{-1}(W)
    \]
    with $\kappa(q,\beta) = \Ad_{u_{q_0}(q)} \circ \beta$ is a $G$-equivariant homeomorphism. Hence, it suffices to prove that the domain equivariantly deformation retracts to $(q_0, \id{})$. Let $\chi$ be the characteristic function of $(\tfrac{1}{2},1]$. The $G$-equivariance of functional calculus and the choice of $W$ ensure that $h(q,t) = \chi((1-t)q + tq_0)$ is a well-defined equivariant retraction of $W$ to $q_0$. The homotopy described in \cite[Thm.~2.6]{paper:EP-Circle} uses the path \cite[eqn.~(9)]{paper:EP-Circle}
    \[
        \gamma \colon [0,1] \to \hom_G(D \otimes \bK, D \otimes \bK \otimes D \otimes \bK)
    \]
    connecting $a \otimes q_0$ to $q_0 \otimes a$ through isomorphisms in the interior of the interval. This carries over to $G=\Z/p\Z$ and shows that $\AutSt{q_0}{D \otimes \bK}$ equivariantly deformation retracts to $\id{}$. Combining both retractions gives the result.
\end{proof}

\bibliographystyle{plain}
\bibliography{main}

\begin{thebibliography}{10}

\bibitem{paper:Antonyan}
S.~A. Antonyan.
\newblock Equivariant generalization of {D}ugundji's theorem.
\newblock {\em Mat. Zametki}, 38(4):608--616, 636, 1985.

\bibitem{paper:AtiyahSegal}
Michael Atiyah and Graeme Segal.
\newblock Twisted {$K$}-theory.
\newblock {\em Ukr. Mat. Visn.}, 1(3):287--330, 2004.

\bibitem{book:BlackadarOpAlg}
B.~Blackadar.
\newblock {\em Operator algebras}, volume 122 of {\em Encyclopaedia of
  Mathematical Sciences}.
\newblock Springer-Verlag, Berlin, 2006.
\newblock Theory of $C^*$-algebras and von Neumann algebras, Operator Algebras
  and Non-commutative Geometry, III.

\bibitem{book:Blackadar}
Bruce Blackadar.
\newblock {\em {$K$}-theory for operator algebras}, volume~5 of {\em
  Mathematical Sciences Research Institute Publications}.
\newblock Cambridge University Press, Cambridge, second edition, 1998.

\bibitem{paper:BoothHoskins}
Peter~I. Booth and Philip~J. Hoskins.
\newblock On equivariant fibrations.
\newblock {\em Bull. Belg. Math. Soc. Simon Stevin}, 9(4):545--554, 2002.

\bibitem{paper:DP-Units}
Marius Dadarlat and Ulrich Pennig.
\newblock Unit spectra of {$K$}-theory from strongly self-absorbing
  {$C^*$}-algebras.
\newblock {\em Algebr. Geom. Topol.}, 15(1):137--168, 2015.

\bibitem{paper:DP-Dixmier}
Marius Dadarlat and Ulrich Pennig.
\newblock A {D}ixmier-{D}ouady theory for strongly self-absorbing
  {$C^*$}-algebras.
\newblock {\em J. Reine Angew. Math.}, 718:153--181, 2016.

\bibitem{paper:FuncEquivKSpec}
Ivo Dell'Ambrogio, Heath Emerson, Tamaz Kandelaki, and Ralf Meyer.
\newblock A functorial equivariant k-theory spectrum and an equivariant
  lefschetz formula, 2011.

\bibitem{paper:DonovanKaroubi}
P.~Donovan and M.~Karoubi.
\newblock Graded {B}rauer groups and {$K$}-theory with local coefficients.
\newblock {\em Inst. Hautes \'{E}tudes Sci. Publ. Math.}, (38):5--25, 1970.

\bibitem{paper:Elmendorf}
A.~D. Elmendorf.
\newblock Systems of fixed point sets.
\newblock {\em Trans. Amer. Math. Soc.}, 277(1):275--284, 1983.

\bibitem{paper:EP-Circle}
David~E. Evans and Ulrich Pennig.
\newblock Equivariant higher {D}ixmier-{D}ouady theory for circle actions on
  {UHF}-algebras.
\newblock {\em Adv. Math.}, 410:Paper No. 108745, 40, 2022.

\bibitem{paper:EvansPennigTwists}
David~E. Evans and Ulrich Pennig.
\newblock Equivariant higher twisted {$K$}-theory of {$SU(n)$} for exponential
  functor twists.
\newblock {\em J. Topol.}, 15(2):896--949, 2022.

\bibitem{paper:FHT3}
Daniel~S. Freed, Michael~J. Hopkins, and Constantin Teleman.
\newblock Loop groups and twisted {$K$}-theory {III}.
\newblock {\em Ann. of Math. (2)}, 174(2):947--1007, 2011.

\bibitem{paper:GabeSzabo}
James Gabe and Gábor Szabó.
\newblock The dynamical {K}irchberg-{P}hillips theorem, 2023.

\bibitem{paper:Grant}
Mark Grant.
\newblock Symmetrized topological complexity.
\newblock {\em J. Topol. Anal.}, 11(2):387--403, 2019.

\bibitem{paper:Hausmann}
Markus Hausmann.
\newblock {$G$}-symmetric spectra, semistability and the multiplicative norm.
\newblock {\em J. Pure Appl. Algebra}, 221(10):2582--2632, 2017.

\bibitem{book:HigsonGuentner}
Nigel Higson and Erik Guentner.
\newblock Group {C}{{\(^{\ast}\)}}-algebras and {K}-theory.
\newblock In {\em Noncommutative geometry. Lectures given at the C. I. M. E.
  summer school, Martina Franca, Italy, September 3--9, 2000}, pages 137--251.
  Berlin: Springer, 2004.

\bibitem{book:Kervaire}
Michael~A. Hill, Michael~J. Hopkins, and Douglas~C. Ravenel.
\newblock {\em Equivariant stable homotopy theory and the {K}ervaire invariant
  problem}, volume~40 of {\em New Mathematical Monographs}.
\newblock Cambridge University Press, Cambridge, 2021.

\bibitem{paper:KaroubiSurvey}
Max Karoubi.
\newblock Twisted {$K$}-theory---old and new.
\newblock In {\em {$K$}-theory and noncommutative geometry}, EMS Ser. Congr.
  Rep., pages 117--149. Eur. Math. Soc., Z\"{u}rich, 2008.

\bibitem{paper:KirchbergPhillips}
Eberhard Kirchberg and N.~Christopher Phillips.
\newblock Embedding of exact {$C^*$}-algebras in the {C}untz algebra
  {$\Cuntz{2}$}.
\newblock {\em J. Reine Angew. Math.}, 525:17--53, 2000.

\bibitem{paper:LashofEquivBundles}
R.~K. Lashof.
\newblock Equivariant bundles.
\newblock {\em Illinois J. Math.}, 26(2):257--271, 1982.

\bibitem{paper:Lind}
John~A. Lind.
\newblock Diagram spaces, diagram spectra and spectra of units.
\newblock {\em Algebr. Geom. Topol.}, 13(4):1857--1935, 2013.

\bibitem{paper:LueckUribe}
Wolfgang L\"{u}ck and Bernardo Uribe.
\newblock Equivariant principal bundles and their classifying spaces.
\newblock {\em Algebr. Geom. Topol.}, 14(4):1925--1995, 2014.

\bibitem{book:MayClass}
J.~Peter May.
\newblock Classifying spaces and fibrations.
\newblock {\em Mem. Amer. Math. Soc.}, 1:xiii+98, 1975.

\bibitem{book:May}
J.~Peter May.
\newblock {\em {$E\sb{\infty }$} ring spaces and {$E\sb{\infty }$} ring
  spectra}, volume Vol. 577 of {\em Lecture Notes in Mathematics}.
\newblock Springer-Verlag, Berlin-New York, 1977.
\newblock With contributions by Frank Quinn, Nigel Ray, and J\o rgen Tornehave.

\bibitem{paper:MayMerlingOsorno}
J.~Peter May, Mona Merling, and Angélica~M. Osorno.
\newblock Equivariant infinite loop space theory, the space level story.
\newblock 2022.

\bibitem{paper:MeyerGroups}
Ralf Meyer.
\newblock On the classification of group actions on {$\rm C^*$}-algebras up to
  equivariant {KK}-equivalence.
\newblock {\em Ann. K-Theory}, 6(2):157--238, 2021.

\bibitem{paper:Murayama}
Mitutaka Murayama.
\newblock On {$G$}-{ANR}s and their {$G$}-homotopy types.
\newblock {\em Osaka J. Math.}, 20(3):479--512, 1983.

\bibitem{book:Naber}
Gregory~L. Naber.
\newblock {\em Topology, geometry, and gauge fields}, volume~25 of {\em Texts
  in Applied Mathematics}.
\newblock Springer, New York, second edition, 2011.
\newblock Foundations.

\bibitem{paper:Palais}
Richard~S. Palais.
\newblock The classification of {$G$}-spaces.
\newblock {\em Mem. Amer. Math. Soc.}, 36:iv+72, 1960.

\bibitem{book:RLL}
M.~R\o{}rdam, F.~Larsen, and N.~Laustsen.
\newblock {\em An introduction to {$K$}-theory for {$C^*$}-algebras}, volume~49
  of {\em London Mathematical Society Student Texts}.
\newblock Cambridge University Press, Cambridge, 2000.

\bibitem{book:RordamStormer}
M.~R\o{}rdam and E.~St\o{}rmer.
\newblock {\em Classification of nuclear {$C^*$}-algebras. {E}ntropy in
  operator algebras}, volume 126 of {\em Encyclopaedia of Mathematical
  Sciences}.
\newblock Springer-Verlag, Berlin, 2002.
\newblock Operator Algebras and Non-commutative Geometry, 7.

\bibitem{paper:Rosenberg}
Jonathan Rosenberg.
\newblock Continuous-trace algebras from the bundle theoretic point of view.
\newblock {\em J. Austral. Math. Soc. Ser. A}, 47(3):368--381, 1989.

\bibitem{paper:Santhanam}
Rekha Santhanam.
\newblock Units of equivariant ring spectra.
\newblock {\em Algebr. Geom. Topol.}, 11(3):1361--1403, 2011.

\bibitem{paper:Schick}
Thomas Schick.
\newblock {$L^2$}-index theorems, {$KK$}-theory, and connections.
\newblock {\em New York J. Math.}, 11:387--443, 2005.

\bibitem{paper:Schlichtkrull}
Christian Schlichtkrull.
\newblock Units of ring spectra and their traces in algebraic {$K$}-theory.
\newblock {\em Geom. Topol.}, 8:645--673, 2004.

\bibitem{book:SchwedeGlobal}
Stefan Schwede.
\newblock {\em Global homotopy theory}, volume~34 of {\em New Mathematical
  Monographs}.
\newblock Cambridge University Press, Cambridge, 2018.

\bibitem{paper:SegalCatAndCoh}
Graeme Segal.
\newblock Categories and cohomology theories.
\newblock {\em Topology}, 13:293--312, 1974.

\bibitem{paper:Strom}
Arne Str\o{}m.
\newblock Note on cofibrations.
\newblock {\em Math. Scand.}, 19:11--14, 1966.

\bibitem{paper:Sullivan}
Dennis~P. Sullivan.
\newblock {\em Geometric topology: localization, periodicity and {G}alois
  symmetry}, volume~8 of {\em $K$-Monographs in Mathematics}.
\newblock Springer, Dordrecht, 2005.
\newblock The 1970 MIT notes, Edited and with a preface by Andrew Ranicki.

\bibitem{paper:TWW}
Aaron Tikuisis, Stuart White, and Wilhelm Winter.
\newblock Quasidiagonality of nuclear {$C^\ast$}-algebras.
\newblock {\em Ann. of Math. (2)}, 185(1):229--284, 2017.

\bibitem{paper:tomDieck}
Tammo tom Dieck.
\newblock Faserb\"undel mit {G}ruppenoperation.
\newblock {\em Arch. Math. (Basel)}, 20:136--143, 1969.

\bibitem{paper:TomsWinter}
Andrew~S. Toms and Wilhelm Winter.
\newblock Strongly self-absorbing {$C^*$}-algebras.
\newblock {\em Trans. Amer. Math. Soc.}, 359(8):3999--4029, 2007.

\bibitem{paper:Trout}
Jody Trout.
\newblock On graded {$K$}-theory, elliptic operators and the functional
  calculus.
\newblock {\em Illinois J. Math.}, 44(2):294--309, 2000.

\bibitem{paper:WanerClassFib}
Stefan Waner.
\newblock Equivariant classifying spaces and fibrations.
\newblock {\em Trans. Amer. Math. Soc.}, 258(2):385--405, 1980.

\bibitem{paper:Waner}
Stefan Waner.
\newblock Equivariant homotopy theory and {M}ilnor's theorem.
\newblock {\em Trans. Amer. Math. Soc.}, 258(2):351--368, 1980.

\end{thebibliography}

\end{document}